\newcommand{\N}{\ensuremath{\mathbb{N}}}
\newcommand{\R}{\ensuremath{\mathbb{R}}}
\newcommand{\C}{\ensuremath{\mathbb{C}}}
\newcommand{\e}{\textnormal{e}}
\newcommand{\norm}[1]{\left\Vert #1\right\Vert}
\newcommand{\tT}{\mathrm{T}}
\newcolumntype{M}[1]{>{\centering\arraybackslash}m{#1}}
\newcolumntype{N}{@{}m{0pt}@{}}
\newcommand{\real}{\mathrm{Re}}
\newcommand{\imag}{\mathrm{Im}}
\newcommand{\Orth}{\mathrm{O}}
\newcommand{\Exp}{\mathrm{Exp}}
\def\SL{\mathrm{SL}}
\def\O{\mathrm{O}}
\newcommand\rst{\mathcal{S}}  
\newcommand{\OB}{{\mathcal{OB}(n)}}
\newcommand\diag{\mathrm{diag}}
\newcommand\tr{\mathrm{tr}}
\newtheorem{thm}{Theorem}[section]
\newtheorem{lemma}[thm]{Lemma}
\newtheorem{remark}[thm]{Remark}
\newtheorem{example}[thm]{Example}
\newtheorem{corollary}[thm]{Corollary}
\newtheorem{proposition}[thm]{Proposition}
\begin{document}
\title{
A New Constrained Optimization Model 
for Solving
the Nonsymmetric Stochastic Inverse Eigenvalue Problem}

\author{
Gabriele Steidl\footnotemark[1]
\and
Maximilian Winkler\footnotemark[2]
}

\maketitle
\footnotetext[1]{Department of Mathematics, TU Berlin,
Stra{\ss}e des 17.~Juni 136, 10587 Berlin
Germany,
\{name\}@math.tu-berlin.de}

\footnotetext[2]{Department of Mathematics,
	Technische Universit\"at Kaiserslautern,
	Paul-Ehrlich-Str.~31, D-67663 Kaiserslautern, Germany,
	\{name\}@mathematik.uni-kl.de}

\begin{abstract}
The stochastic inverse eigenvalue problem aims to reconstruct a stochastic matrix from its spectrum.
While there  exists a large literature on the existence of 
solutions for special settings, 
there are only few numerical solution methods available so far.
Recently, Zhao, Jin and Bai \cite{ZJB2016} proposed a constrained optimization model on the manifold
of so-called isospectral matrices and adapted a modified 
Polak-Ribi\`{e}re-Polyak conjugate gradient method
to the geometry of this manifold.
However, not every stochastic matrix is an isospectral one and
the model in \cite{ZJB2016} is based on the assumption that  for 
each stochastic matrix there  exists
a (possibly different) isospectral, stochastic matrix with the same spectrum.
We are not aware of such a result in the literature, but will see that the claim is at least true for $3 \times 3$ matrices.
In this paper, we suggest to extend the above model by considering matrices which differ from isospectral ones only
by multiplication with a block dia\-gonal matrix with $2 \times 2$ blocks from 
the special linear group $\mathrm{SL}(2)$,
where the number of blocks is given by the number of pairs of complex-conjugate eigenvalues. 
Every stochastic matrix can be written in such a 
form, which was not the case for the form of the isospectral matrices.
We prove that our model has a minimizer and show how the  Polak-Ribi\`{e}re-Polyak conjugate gradient method 
works on the corresponding
more general manifold. 
We demonstrate by numerical examples that the new, more general method performs 
similarly as the one in~\cite{ZJB2016}.
\end{abstract}

\section{Introduction}
%
Stochastic matrices arise in many applications
and have recently got the author's attention in a labeling 
approach in imaging science \cite{APSS2016,BFPS16}.
In this paper, we are interested in the inverse eigenvalue problem for stochastic matrices (StIEP)
i.e., in finding a (nonsymmetric) stochastic matrix with this prescribed spectrum.
Problems of this kind arise, e.g., in applied mechanics, molecular spectroscopy or control theory, see \cite{CGM19,CG05} and the references therein.
StIEP is closely related to the problem of finding a matrix with 
nonnegative entries and prescribed spectrum, called NIEP.
This is due to the fact that for any nonnegative matrix $A$ with positive maximal
eigenvalue $\rho(A)$ and corresponding positive eigenvector $u$, the matrix
$\rho(A)^{-1} \,\diag(u)^{-1} \, A \,  \diag(u)$ is a stochastic 
one, see 
\cite{CG05}. 
There exists a large literature on existence conditions of solutions of StIEP and NIEP for special settings.
Sufficient conditions in case of a real-valued set of eigenvalues were first given in \cite{Suleimanova49}, continued by \cite{Perfect53}. 
Karpelevic \cite{Ka51} completely characterized the set of points $\Theta_n$ which contains the eigenvalues of any stochastic $n\times n$ matrix.
This does not mean that each $n$-tupel of real and complex 
conjugate points from $\Theta_n$ is the spectrum of a 
stochastic 
matrix. 
The complete statement of Karpelevic’s theorem is rather lengthy, see also \cite[Theorem 1.8]{Mi88}.
A shorter formulation can be found in \cite{Ito97} and a more constructive view was recently given in \cite{JP17}. 
For further results, we refer to the survey papers \cite{ELN04,JPMP18} and the monographs \cite{CG05,Xu98}.
However, the general question under which conditions on the eigenvalues a solution of StIEP exists, is still open.

Besides theoretical results, there exist certain numerical approaches to solve StIEP or NIEP. 
In \cite{CMMD14,Lin15},  stochastic matrices with given real-valued eigenvalues fulfilling additional assumptions are constructed explicitly with recursive algorithms. These assumptions are in particular fulfilled if all eigenvalues are positive with maximum $1$. 
In \cite{Orsi06}, an alternating projection - like algorithm was proposed for solving NIEP without further assumptions on the eigenvalue set. 

Another class of algorithms aims to minimize a certain cost function based on matrix factorizations using optimization methods which perform a descend on the underlying matrix manifolds. A first method in this direction was given by Chu \cite{Chu91}, who treated
the symmetric NIEP by solving
for a given vector $\Lambda$ of eigenvalues 
\begin{align} \label{eqn:Chu91MinimizationProblem}
\min_{Q,B} \frac{1}{2} \| B \circ B - Q \, \diag(\Lambda) \, Q^\tT\|_F^2 \quad \text{subject to} \quad Q\in\Orth(n), \ B\in\R^{n,n},
\end{align}
where $\Orth(n)$ is the set of orthogonal matrices, $\circ$ denotes the componentwise product and $\| \cdot\|_F$ is the Frobenius norm. 
The above problem is manifold constrained in the variable $Q$ and a gradient flow with Riemannian gradient on $\Orth(n)$ was applied.
Extending the idea to not necessarily symmetric nonnegative matrices, a similar approach was developed later in \cite{CG98}. 
After creating a real Jordan form $J(\Lambda)$ with the given real and imaginary parts of the eigenvalues, a gradient flow method for minimizing
\begin{align}\label{eqn:Chu98MinimizationProblem}
\min_{T,B}  \frac{1}{2} \| B \circ B - T \, J(\Lambda) \, T^{-1}\|_F^2 \quad 
\text{subject to} \quad T \in \mathrm{GL}(n,\R), \ B\in\R^{n,n},
\end{align}
where $\mathrm{GL}(n,\R)$ denotes the (open) manifold of invertible $n \times n$ matrices,
was proposed, where a singular value decomposition of the matrix $T$ is used 
to overcome the instabilities due to the matrix inverse. 
Apart from promising numerical results, the authors report several problems: the existence of a minimizer 
is unclear, 
there is no guarantee for the matrix $T$ to stay bounded during the algorithm, and for several choices of initial values it appears that $T$ converges to a singular matrix. 
Recently, a model for solving StIEP also for nonsymmetric matrices
together with a
geometric optimization method, namely a conjugate gradient descent algorithm in 
a modified version of Polak-Ribi\`{e}re-Polyak,  
was proposed by Zhao, Jin and Bai \cite{ZJB2016}. 
The minimization is basically done over the product manifold of the orthogonal matrices and matrices with rows on the unit
sphere of $\mathbb R^n$.
The model is based on the assumption that StIEP has a solution if and only if the set of so-called isospectral matrices 
$\mathrm{Iso}(\Lambda)$ associated
to the given vector of eigenvalues $\Lambda$ contains a stochastic matrix. While it is clear that the problem has a solution if 
$\mathrm{Iso}(\Lambda)$ contains a stochastic matrix, the opposite direction is to the best of our knowledge not proved so far.
In this paper, we propose an extension of the model in  \cite{ZJB2016} by  special blockdiagonal matrices whose $2\times2$ blocks
are special linear matrices and
correspond to the complex conjugate eigenvalue pairs of the matrix.
Our model has the advantage that every stochastic matrix can be written in the novel form which was not the case for 
isospectral matrices.
We also use a geometric conjugate gradient algorithm for the minimization which performs on the manifold extended by the product of 
$\mathrm{SL}(2)$ matrices. In contrast to \cite{ZJB2016}, the Riemannian inner product of this manifold 
depends on the respective tangent space.
Our model appears to be slightly slower than the model from \cite{ZJB2016}, 
which is not surprising, since we have to minimize over more parameters.
In general, StIEP remains a severely ill posed problem and the 
performance of all algorithms heavily depends on the
distribution of the given eigenvalues.

The outline of this paper is as follows: 
in Section \ref{sec:prelim} we provide the preliminaries 
and motivate our model based on the fact that it is not clear if 
	for any stochastic matrix there exists
a (possibly different) isospectral, stochastic matrix with the same spectrum.
Using the discrete Fourier transform,  we prove that the above conjecture is at least correct for stochastic $3 \times 3$ matrices in Section \ref{sec:33}.
Here the reader may get an idea why its proof appears to be hard for arbitrary matrix sizes.
In Section \ref{sec:model}, we introduce our novel model based on those presented by Zhao, Jin and Bai \cite{ZJB2016}.
We prove that the
new cost function has compact level sets which implies the existence of a (global) minimizer.
We give a short introduction to conjugate gradient algorithms on manifolds in Section~\ref{sec:algs} 
and compute the required expressions for our special manifold in Section~\ref{sec:specmani}. 
We consider an additional parameter update step in line search which seems to 
accelerate the performance slightly for the model in \cite{ZJB2016} and is also 
needed in the convergence proof.
Theorem \ref{thm:SameConvergenceTheorem} states a convergence result for the conjugate gradient method on our manifold.
Since many, but not all  parts of the proof follow the ideas in \cite{ZJB2016}, we swap it to the appendix.
In Section~\ref{sec:numerics}, we demonstrate by numerical experiments that our method 
is competitive with the one in \cite{ZJB2016}, but slightly costlier.

\section{Preliminaries} \label{sec:prelim}
By Schur's theorem we know that for every matrix $A \in \mathbb C^{n,n}$ 
there exists a unitary matrix $U \in\mathbb{C}^{n,n}$ such that 
$A = U \, B \, U^*$,
where $B$ is an upper triangular matrix having the eigenvalues of $A$ on its diagonal.
For real-valued matrices this result can be specified by the following theorem, 
see, e.g., \cite[Theorems 2.3.1, 2.3.4]{HJ12}. 
Recall that the non-real eigenvalues of a real-valued matrix appear in conjugate pairs.
By  $\Orth(n)$ we denote the set of orthogonal $n\times n$ matrices.

\begin{thm}\label{thm:Schur}
Let $A \in\mathbb{R}^{n,n}$ be an arbitrary matrix with real eigenvalues
$\lambda_1, \ldots, \lambda_s$ 
and complex eigenvalues 
$\lambda_{s+1},\ldots,\lambda_n \in \mathbb C \setminus \mathbb R$ 
appearing in conjugate pairs $\lambda_{s+2j} = \bar \lambda_{s+2j-1}$, $j=1,\ldots, t$,  $t = \frac{n-s}{2}$.
Let $\Lambda \coloneqq (\lambda_1, \ldots, \lambda_n)^\tT$.
Then the following holds true:
\begin{itemize}
\item[\textup{i)}] There exists a real-valued, invertible matrix $T 
\in\mathbb{R}^{n,n}$ such that 
$$
A = T  \left( D(\Lambda) + V \right) T^{-1},
$$
where 
\begin{align*}
D(\Lambda) \coloneqq
\begin{pmatrix}
\lambda_1 & 0 & \hdots & \hdots & \hdots & \hdots &  \hdots & 0  \\
0 & \lambda_2 & 0 & \hdots & \hdots  & \hdots & \hdots & 0 \\
0 & 0 & \ddots & 0 & \hdots & \hdots & \hdots &0\\
0 & 0 & 0 & \lambda_{s} & 0 & \hdots & \hdots & 0  \\
0 & 0 & 0 & 0 & \lambda_{1}^{[2]} & 0  & \hdots & 0 \\
  &  &  & &  & \ddots & & &   \\
  &  &  & &  & & \ddots & &   \\
0 & 0 & 0 & 0 & 0 & 0 &  0 & \lambda_{t}^{[2]} 
\end{pmatrix}
\end{align*}
with
\begin{align*}
\lambda_j^{[2]} \coloneqq \left(
\begin{array}{rr}
\real(\lambda_{s+2j-1}) & \imag(\lambda_{s+2j-1}) \\
-\imag(\lambda_{s+2j-1}) & \real(\lambda_{s+2j-1})
\end{array} \right), \ \ j = 1,\ldots,t,
\end{align*}
and $V \in \mathcal{V}_t$. Here $\mathcal{V}_t$ 
denotes the set of upper triangular matrices with zeros on the diagonal and 
$v_{s+2j-1,s+2j} = 0$ for all $j=1,\ldots,t$.
If $A$ has only real eigenvalues, then $T$ can be chosen as an orthogonal 
matrix.
\item[\textup{ii)}] 
There exists a matrix $Q \in \Orth(n)$ such that 
$$A = Q  \left( \tilde D(\Lambda) + V \right) Q^\tT$$ 
with
\begin{align*}
\tilde D(\Lambda) \coloneqq
\begin{pmatrix}
\lambda_1 & 0 & \hdots & \hdots & \hdots & \hdots &  \hdots & 0  \\
0 & \lambda_2 & 0 & \hdots & \hdots  & \hdots & \hdots & 0 \\
0 & 0 & \ddots & 0 & \hdots & \hdots & \hdots &0\\
0 & 0 & 0 & \lambda_{s} & 0& \hdots & \hdots & 0  \\
0 & 0 & 0 & 0 & \mu_{1}^{[2]} & 0  & \hdots & 0 \\
 &  &  & &  & \ddots & & &   \\
  &  &  & &  & & \ddots & &   \\
0 & 0 & 0 & 0 & 0 & 0 &  0 & \mu_{t}^{[2]} 
\end{pmatrix}
\end{align*}
where $\mu_{j}^{[2]} \in \mathbb R^{2,2}$ is a matrix with eigenvalues $\bar \lambda_{s+2j},  \lambda_{s+2j}$, $j=1,\ldots,t$
and $V \in \mathcal{V}_t$.
\end{itemize}
\end{thm}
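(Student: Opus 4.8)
The plan is to prove part~ii) first --- it is the classical real Schur decomposition --- by induction on $n$, and then to obtain part~i) from it by conjugating with a block-diagonal matrix that only modifies the $2\times 2$ diagonal blocks.

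\textbf{Part ii), by induction on $n$.} For $n=1$ there is nothing to show, and for $n=2$ with a non-real conjugate pair one takes $Q=I_2$, $V=0$, $\mu_1^{[2]}=A$. For the inductive step I peel off one eigenvalue (or one conjugate pair) at a time, always choosing the one dictated by the prescribed order, i.e.\ first the real $\lambda_1,\dots,\lambda_s$ and then the pairs. If the current matrix has a real eigenvalue $\lambda$, pick a unit eigenvector $q_1\in\R^n$, complete it to an orthonormal basis $q_1,\dots,q_n$, and with $Q_1=[q_1\mid\cdots\mid q_n]\in\Orth(n)$ get $Q_1^\tT A Q_1=\left(\begin{smallmatrix}\lambda&w^\tT\\0&A'\end{smallmatrix}\right)$, where $A'\in\R^{n-1,n-1}$ carries the remaining eigenvalues; apply the hypothesis to $A'$. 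If the current matrix instead has a non-real eigenvalue $\lambda=\alpha+\ii\beta$, $\beta\neq 0$, pick a complex eigenvector $z=u+\ii v$ with $u,v\in\R^n$; because $\lambda\notin\R$ the vectors $u,v$ are linearly independent over $\R$ and $\mathrm{span}_\R\{u,v\}$ is a two-dimensional $A$-invariant subspace. Gram--Schmidt on $\{u,v\}$ gives an orthonormal pair $q_1,q_2$ spanning it; completing to an orthonormal basis and letting $Q_1\in\Orth(n)$ be the corresponding matrix yields $Q_1^\tT A Q_1=\left(\begin{smallmatrix}M&W\\0&A'\end{smallmatrix}\right)$ with $M\in\R^{2,2}$ having eigenvalues $\lambda,\bar\lambda$ and $A'\in\R^{n-2,n-2}$ carrying the rest; again apply the hypothesis. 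Composing all the orthogonal matrices produces $Q\in\Orth(n)$ with $Q^\tT A Q$ block upper triangular, diagonal blocks $\lambda_1,\dots,\lambda_s,\mu_1^{[2]},\dots,\mu_t^{[2]}$ in that order. Splitting this matrix into its block-diagonal part $\tilde D(\Lambda)$ and its strictly block-upper-triangular part $V$, one checks that $V$ is (ordinary) upper triangular with zero diagonal and $v_{s+2j-1,s+2j}=0$, hence $V\in\mathcal{V}_t$; if all eigenvalues are real the construction reduces to the ordinary orthogonal Schur form.

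\textbf{Part i), from part ii).} Starting from $A=Q(\tilde D(\Lambda)+V)Q^\tT$, it remains to replace each block $\mu_j^{[2]}$ by the normalized block $\lambda_j^{[2]}$. A real $2\times 2$ matrix whose eigenvalues form the non-real pair $\real(\lambda_{s+2j-1})\pm\ii\,\imag(\lambda_{s+2j-1})$ has an irreducible characteristic polynomial of degree $2$, hence is similar over $\R$ to the companion matrix of that polynomial; since $\mu_j^{[2]}$ and $\lambda_j^{[2]}$ share these eigenvalues, there is an invertible $S_j\in\R^{2,2}$ with $S_j^{-1}\mu_j^{[2]}S_j=\lambda_j^{[2]}$ --- explicitly, if $\mu_j^{[2]}w=\lambda_{s+2j-1}w$ with $w=w_1+\ii w_2$, then $S_j=[w_1\mid w_2]$ does the job (and $w_1,w_2$ are linearly independent as above). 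Putting $S\coloneqq\diag(I_s,S_1,\dots,S_t)$, conjugation by the block-diagonal matrix $S$ turns $\tilde D(\Lambda)$ into $D(\Lambda)$ and maps a strictly block-upper-triangular matrix to a strictly block-upper-triangular one, so $S^{-1}(\tilde D(\Lambda)+V)S=D(\Lambda)+V'$ with $V'\in\mathcal{V}_t$ by the same bookkeeping as above. Then $T\coloneqq QS$ is invertible and $A=T(D(\Lambda)+V')T^{-1}$; if there are no non-real eigenvalues, $S=I_n$ and $T=Q\in\Orth(n)$.

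\textbf{Where the work is.} Nothing here is deep --- the statement is essentially the real Schur form followed by a normalization of the $2\times 2$ blocks (cf.\ \cite{HJ12}) --- so the only points that need care are: the linear independence of $u$ and $v$, which is what makes the invariant $2$-plane exist; the sign convention, so that $\lambda_j^{[2]}$ carries $-\imag(\lambda_{s+2j-1})$ in the lower-left corner and not the upper-right; and the observation that the positions $(s+2j-1,s+2j)$ sit \emph{inside} the $2\times2$ diagonal blocks, so that in both ii) and i) the off-block-diagonal remainder automatically lies in $\mathcal{V}_t$.
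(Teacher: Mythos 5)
Your proof is correct. The paper itself offers no proof of this theorem: it is presented as a known fact and cited to Horn and Johnson \cite[Theorems 2.3.1, 2.3.4]{HJ12}, so there is no internal argument to compare against. What you give is the standard self-contained derivation: the inductive construction of the real Schur form (peeling off one real eigenvalue or one $A$-invariant two-plane at a time, with the linear independence of $u$ and $v$ being the essential nontrivial point), followed by conjugation with the block-diagonal matrix $S = \diag(I_s, S_1, \dots, S_t)$, where $S_j = [w_1 \mid w_2]$ built from the real and imaginary parts of a complex eigenvector of $\mu_j^{[2]}$ normalizes $\mu_j^{[2]}$ to $\lambda_j^{[2]}$. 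The bookkeeping confirming $V \in \mathcal{V}_t$ --- that positions $(s+2j-1, s+2j)$ lie inside the $2\times 2$ diagonal blocks and hence are absorbed into $\tilde D(\Lambda)$ rather than $V$, and that conjugation by block-diagonal $S$ preserves strict block-upper-triangularity --- is exactly right. One small remark: your appeal to ``two matrices with the same irreducible quadratic characteristic polynomial are similar to the common companion matrix'' is fine, but the explicit $S_j = [w_1 \mid w_2]$ you also give renders that step constructive and makes the preceding abstract argument redundant.
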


It is in general not possible to choose matrices $\mu_{j}^{[2]}$ in Part ii) 
of the theorem of the special form $\lambda_{j}^{[2]}$ from Part i), as the Example \ref{ex:33} in the next section shows.
For a given vector 
\begin{equation}\label{self-adj}
\Lambda \coloneqq (\lambda_1,\ldots,\lambda_n )^\tT, \; \lambda_1,\ldots,\lambda_s \in \mathbb R, \; 
\lambda_{s+1},\ldots,\lambda_n \in \mathbb C \setminus \mathbb R,\;
\lambda_{s+2j}= \bar \lambda_{s+2j-1}, j=1,\ldots,t,
\end{equation}
where $t = (n-s)/2$,
let $D(\Lambda)$ and $\mathcal{V}_t$ be defined as in Theorem \ref{thm:Schur}i) and
\begin{align} \label{isospect}
\mathrm{Iso}(\Lambda) \coloneqq \{A \coloneqq Q (D(\Lambda) + V) Q^\tT: Q \in \Orth(n), \, V \in \mathcal{V}_t \}.
\end{align}
The set $\mathrm{Iso}(\Lambda)$ is known as set of \emph{isospectral matrices} associated with $\Lambda$.
In the following,  we will use $\Lambda$ likewise as set of eigenvalues or as vector containing the eigenvalues,
where the meaning becomes always clear from the context. 
\newline

We are interested in the set of stochastic $n \times n$ matrices
\begin{align*}
\rst(n) &\coloneqq \{ A \in \mathbb{R}_{\ge 0}^{n,n}: A  1_n =  1_n \}\\
&= \{S \circ S: \mathrm{diag} (S S^\tT) = I_n,\; S \in \mathbb R^{n,n} \},
\end{align*}
where $\circ$ denotes the componentwise product and for $X\in\R^{n,n}$, 
$\diag(X)$ denotes the diagonal 
matrix with same diagonal entries.
For $A \in \rst(n)$ we have $A 1_n = 1_n$, 
so that 1 is an eigenvalue of $A$ with eigenvector $1_n$.
Moreover, by the Perron-Frobenius theorem \cite[Theorem 5.2.1]{Xu98}, all 
eigenvalues of $A$ have absolute values not larger than 1.

We call a vector $\Lambda$ whose non-real components appear in conjugate pairs, \emph{self-conjugate}.
Given a self-conjugate vector $\Lambda$ we are interested in finding a stochastic matrix having the components of $\Lambda$ as eigenvalues.
As already mentioned in the introduction, this problem has no solution for general self-conjugate vectors $\Lambda$.
In this paper, we examine the following problem:
\begin{center}
\begin{tabular}{ll}
(StIEP)\hspace{0.7cm}  &Given a vector $\Lambda$ whose entries are the 
eigenvalues of a stochastic \\  
                       &matrix, find a stochastic matrix with these eigenvalues.
\end{tabular}
\end{center}

\begin{remark}\label{rem:wrong}
In \textup{\cite{ZJB2016}}, the authors claimed that \textup{(StIEP)} 
has a solution if and only if $\mathrm{Iso}(\Lambda) \cap \rst(n) \not = \emptyset$.
The authors develop a numerical algorithm to solve \textup{(StIEP)} based on 
this claim.
Clearly, if  $\mathrm{Iso}(\Lambda) \cap \rst(n) \not = \emptyset$, then 
\textup{(StIEP)} has a solution.
However, we have not found a reference that the converse is also true.  
In general, we do not know if it is possible that \textup{(StIEP)} has a 
solution, 
but there is no solution with decomposition \eqref{isospect}, i.e. $\mathrm{Iso}(\Lambda) \cap \rst(n) = \emptyset$.
\end{remark}

At least for stochastic $3 \times 3$ matrices there are some results in this direction which
we summarize and partially prove in the next section.

\section{Stochastic $3 \times 3$ Matrices} \label{sec:33}
%
First, we give an example of a stochastic $3 \times 3$ matrix which cannot be decomposed as in Theorem \ref{thm:Schur}ii)
with $\mu_1^{[2]}  = \lambda_1^{[2]}$. 

\begin{example} \label{ex:33}
Let $A\in \rst(3) \cap \mathrm{Iso}(\Lambda)$ with eigenvalue vector
$\Lambda = (1,\lambda,\bar \lambda)^\tT$, $\lambda = \lambda_R + i \lambda_I$, $\lambda_I \not = 0$.
Then the matrix $Q \in \Orth(3)$ in the isospectral decomposition 
\textup{\eqref{isospect}} of $A$ 
has the normed eigenvector of $A$ belonging to the eigenvalue $1$ as first column, i.e.,
$\pm \frac{1}{\sqrt{3}} 1_3$. 
We denote the second column of $Q$ by
$\sqrt{3} (q_1,q_2,q_3)^\tT$, where $q_1+q_2+q_3 = 0$ and $q_1^2 + q_2^2 + q_3^2 = 1$.
Using the vector product of the first two columns of $Q \in \Orth(3)$, we conclude that, up to sign changes of its columns,
$Q$ must have the form 
\begin{align}\label{form_p}
Q = \frac{1}{\sqrt{3}} 
\left( \begin{array}{rrr}
1&\sqrt{3}  q_1 &  q_3-q_2 \\ 
1&\sqrt{3}  q_2 &  q_1-q_3 \\ 
1&\sqrt{3}  q_3 &  q_2-q_1
\end{array} \right)
= 
\frac{1}{\sqrt{3}} 
\left( \begin{array}{ccc}
1&\sqrt{3}  q_1 &  -q_1-2q_2 \\ 
1&\sqrt{3}  q_2 &  2q_1+q_2 \\ 
1&-\sqrt{3} (q_1+q_2) &  q_2-q_1
\end{array} \right)
\end{align} 
with 
\begin{equation}
q_1^2 + q_2^2 + q_1 q_2 = \frac12.
\end{equation}
Note that the last equation has a real solution if and only if  $q_2^2 \le \frac23$.
Since $A \in \mathrm{Iso}(\Lambda)$, it holds
\begin{equation}\label{a}
Q^\tT A Q = 
\left( \begin{array}{rrr}
1 & u & v \\ 0 & \lambda_R & \lambda_I \\ 0 & -\lambda_I & \lambda_R
\end{array} \right)
\end{equation}
for some $u,v \in \mathbb R$, so that
\begin{equation}\label{aa}
\sqrt{3} A Q =  \left( 
\begin{array}{rrr}
1 &  u + \sqrt{3}  \lambda_R q_1- \lambda_I(q_3-q_2)  & v + \sqrt{3} \lambda_I q_1 + \lambda_R (q_3-q_2)   \\ 
1 &  u + \sqrt{3}  \lambda_R q_2- \lambda_I(q_1-q_3)  & v + \sqrt{3} \lambda_I q_2  + \lambda_R (q_1-q_3)  \\ 
1 &  u + \sqrt{3}  \lambda_R q_3- \lambda_I(q_2-q_1)  & v + \sqrt{3} \lambda_I q_3  + \lambda_R (q_2-q_1) 
\end{array} 
\right).
\end{equation}
We want to show that the stochastic matrix
\begin{align} \label{Counterexample3x3}
A = \left( 
\begin{array}{ccc}
\frac{1}{2} & \frac{1}{2} & 0 \\
\frac{1}{3} & \frac{1}{3} & \frac{1}{3} \\
1 & 0 & 0
\end{array}
\right)
\end{align}
does not have a decomposition \eqref{isospect}.
The matrix $A$ has the eigenvalues $1$ and $\frac{1}{12} (-1 \pm \sqrt{23}i)$. 
Assume in the contrary that $A$ has a decomposition  \eqref{isospect}.
Comparison of the second and third columns of the matrices in \eqref{aa} gives
\begin{align} \label{e1}
\frac{\sqrt{3}}{2} (q_1+q_2) &= u + \sqrt{3} \lambda_R q_1 - \lambda_I(q_3-q_2) ,\\
0&=u + \sqrt{3}  \lambda_R q_2 - \lambda_I(q_1-q_3) , \label{e2}\\
\sqrt{3} q_1 &=u + \sqrt{3}  \lambda_R q_3 - \lambda_I(q_2-q_1) ,\label{e3}\\
\frac12(q_1-q_2) &= v + \sqrt{3}  \lambda_I q_1 + \lambda_R(q_3-q_2) ,\label{e4}\\
0&=v + \sqrt{3}  \lambda_ Iq_2 + \lambda_R(q_1-q_3) ,\label{e5}\\
q_3-q_2 &=  v + \sqrt{3} \lambda_I q_3 + \lambda_R(q_2-q_1) .\label{e6}
\end{align}
Replacing $u$ and $v$ by applying the second and fourth equation and $q_3 = -(q_1+q_2)$, we get
\begin{align*}
0 &= q_1\left(-\frac{1}{2} +  \lambda_R + \sqrt{3} \lambda_I \right) + q_2 \left(-\frac{1}{2} -  \lambda_R + \sqrt{3} \lambda_I\right),\\
0 &= q_1\left(-1 -  \lambda_R + \sqrt{3} \lambda_I \right) - 2 q_2   
\lambda_R,\\
0 &= q_1\left(-\frac12 - 3 \lambda_R + \sqrt{3} \lambda_I\right) + q_2 \left(\frac12 - 3 \lambda_R - \sqrt{3} \lambda_I\right),\\
0 &= q_1\left(1-3\lambda_R - \sqrt{3} \lambda_I\right) + q_2\left(2 - 2 \sqrt{3} \lambda_I\right).
\end{align*}
For $\lambda_R = -\frac{1}{12}$ and $\lambda_I = \frac{\sqrt{23}}{12}$ or $\lambda_I = -\frac{\sqrt{23}}{12}$,
this linear system of equations has only the trivial solution $q_1 = q_2 = 0$. Since then $q_3$ is also zero,
this contradicts the orthogonality of $Q$. 

Changing signs of the columns of $Q$ would lead to three new cases in \eqref{e1}-\eqref{e6},
namely the changes
\begin{align*}
&Q \rightarrow Q \mathrm{diag}(1,-1,1) : \quad u \rightarrow -u, \; \lambda_I \rightarrow -\lambda_I,\\
&Q \rightarrow Q \mathrm{diag}(1,-1,1) : \quad v \rightarrow -v, \; \lambda_I \rightarrow -\lambda_I,\\
&Q \rightarrow Q \mathrm{diag}(1,-1,-1): \, u \rightarrow -u, v \; \rightarrow -v.
\end{align*}
In all three cases, a simple variable substitution leads to the same final system of equations. 
\hfill $\Box$
\end{example}
\medskip

Next, let us characterize the spectra of  stochastic $3 \times 3$ matrices.
The facts connected in the following theorem can be found in \cite{LL78}.

\begin{thm} \label{LoewyLondon}
A set $\Lambda = \{ \lambda_1, \lambda_2, \lambda_3 \}$ with $\Lambda = \bar \Lambda$ is the spectrum of a matrix 
$A \in \mathbb{R}_{\ge 0}^{3,3}$ 
if and only if
\begin{align}
\max_{1\leq k \leq 3}|\lambda_k| 
& \in\Lambda, \label{LL1} \\
\mathrm{tr} (A) = \lambda_1+\lambda_2+\lambda_3
&\geq 0, \label{LL2} \\
(\lambda_1+\lambda_2+\lambda_3) ^2 
&\leq 3 (\lambda_1^2+\lambda_2^2+\lambda_3^2). \label{LL3} 
\end{align} 
\end{thm}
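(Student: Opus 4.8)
The plan is to establish the two implications separately, the forward one being immediate and the converse requiring explicit constructions of nonnegative matrices. For \emph{necessity}, let $A\in\mathbb{R}_{\ge 0}^{3,3}$ have spectrum $\Lambda$. By the Perron--Frobenius theorem the spectral radius $\rho(A)=\max_{k}|\lambda_k|$ is itself an eigenvalue of $A$, hence lies in $\Lambda$, which is \eqref{LL1}. Since the diagonal entries of $A$ are nonnegative, $\sum_k\lambda_k=\tr(A)=\sum_i a_{ii}\ge 0$, which is \eqref{LL2}. For \eqref{LL3} I would chain the Cauchy--Schwarz inequality $\bigl(\sum_i a_{ii}\bigr)^2\le 3\sum_i a_{ii}^2$ with $\sum_i a_{ii}^2\le\sum_{i,j}a_{ij}a_{ji}=\tr(A^2)$, the last step being valid because every cross term $a_{ij}a_{ji}$ is nonnegative, and then substitute $\tr(A)=\sum_k\lambda_k$ and $\tr(A^2)=\sum_k\lambda_k^2$. (This is the $n=3$, $m=2$ instance of the Johnson--Loewy--London trace inequality.)

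For \emph{sufficiency}, assume \eqref{LL1}--\eqref{LL3}. Using \eqref{LL1} I would relabel so that $\lambda_1=\max_k|\lambda_k|=:\rho\ge 0$, and then split according to whether $\lambda_2,\lambda_3$ are real (both lying in $[-\rho,\rho]$) or form a genuine complex-conjugate pair. In the \emph{real case} \eqref{LL3} is nothing but Cauchy--Schwarz for three reals, hence automatic, and I would exhibit a nonnegative realization in three sub-cases: $\diag(\lambda_1,\lambda_2,\lambda_3)$ when $\lambda_2,\lambda_3\ge 0$; the direct sum of $[\lambda_2]$ with $\left(\begin{smallmatrix}\lambda_1+\lambda_3 & -\lambda_1\lambda_3\\ 1 & 0\end{smallmatrix}\right)$ when $\lambda_2\ge 0>\lambda_3$, whose entries are nonnegative precisely because $\lambda_1\ge|\lambda_3|$; and the companion matrix of $(x-\lambda_1)(x-\lambda_2)(x-\lambda_3)$ when $\lambda_2,\lambda_3<0$, for which one checks that the three nontrivial coefficients have the correct signs, using \eqref{LL2} for the trace coefficient and $\lambda_1\ge|\lambda_2|,|\lambda_3|$ for the other two.

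The substantive case is the \emph{complex} one, $\lambda_{2,3}=\alpha\pm\mathrm{i}\beta$ with $\beta\ne 0$, where, unlike in the real case, no triangular or block-triangular construction is available. Here I would take the circulant matrix $A=c_0 I+c_1 P+c_2 P^2$, with $P$ the $3\times 3$ cyclic shift permutation matrix and $c_0=\tfrac13(\rho+2\alpha)$, $c_{1,2}=\tfrac13(\rho-\alpha)\pm\tfrac{1}{\sqrt3}\beta$. A short computation with the circulant eigenvalues $c_0+c_1\omega^j+c_2\omega^{2j}$, $\omega=\mathrm{e}^{2\pi\mathrm{i}/3}$, shows that for \emph{any} such $c_j$ the spectrum of $A$ equals $\{\rho,\alpha\pm\mathrm{i}\beta\}=\Lambda$, so everything reduces to verifying $c_0,c_1,c_2\ge 0$. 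Now $c_0\ge 0$ is exactly \eqref{LL2} (since $\tr(A)=\rho+2\alpha$), the inequality $c_1+c_2=\tfrac23(\rho-\alpha)\ge 0$ holds because $\rho\ge|\lambda_2|\ge\alpha$, and $c_1,c_2\ge 0$ is equivalent to $3\beta^2\le(\rho-\alpha)^2$, which an expansion of $(\rho+2\alpha)^2\le 3(\rho^2+2\alpha^2-2\beta^2)$ identifies with \eqref{LL3}. I expect this last identification -- recognizing that \eqref{LL3} is precisely the nonnegativity constraint on the off-diagonal circulant weights -- to be the only genuinely delicate point, everything else being bookkeeping.
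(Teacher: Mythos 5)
The paper does not prove this theorem: it attributes the result to \cite{LL78} and states it without argument. There is therefore no internal proof to compare against, and your proposal has to be judged on its own. It is correct and complete.

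The necessity direction is handled exactly as one should: \eqref{LL1} is Perron--Frobenius, \eqref{LL2} is nonnegativity of the diagonal, and your derivation of \eqref{LL3} by chaining $\bigl(\tr A\bigr)^2\le 3\sum_i a_{ii}^2\le 3\tr(A^2)$ is indeed the standard proof of the $m=2$ Johnson--Loewy--London trace inequality; the only place nonnegativity enters is in $\sum_i a_{ii}^2\le\tr(A^2)$, which you correctly justify via $a_{ij}a_{ji}\ge 0$ for the off-diagonal terms. For sufficiency, the real case is elementary (and you correctly observe that \eqref{LL3} is then vacuous); I verified that in your second sub-case the $2\times 2$ block has characteristic polynomial $(x-\lambda_1)(x-\lambda_3)$ with nonnegative entries because $\lambda_1\ge -\lambda_3\ge 0$, and that in the third sub-case the companion matrix has nonnegative entries because $e_1\ge 0$ by \eqref{LL2}, $e_3=\lambda_1\lambda_2\lambda_3\ge 0$, and $e_2=\lambda_1(\lambda_2+\lambda_3)+\lambda_2\lambda_3\le 0$ follows from $|\lambda_2|,|\lambda_3|\le\lambda_1$. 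The complex case is the substantive one and your circulant construction is exactly right: $c_0+c_1+c_2=\rho$, $c_0+c_1\omega+c_2\omega^2=\alpha+\mathrm{i}\beta$, $c_0\ge 0\Leftrightarrow\eqref{LL2}$, $c_1+c_2=\tfrac23(\rho-\alpha)\ge 0$ because $\rho\ge|\alpha+\mathrm{i}\beta|>|\alpha|$, and $\min(c_1,c_2)\ge 0\Leftrightarrow 3\beta^2\le(\rho-\alpha)^2\Leftrightarrow\eqref{LL3}$, the last equivalence being the identity you flag as the delicate point and which does check out upon expansion. One pleasant observation: your circulant $c_0 I+c_1 P+c_2 P^2$ is, up to the sign of $\beta$ (i.e.\ the ordering of the conjugate pair) and the shift direction, precisely the matrix $\mathrm{circ}(F_3^{-1}\Lambda)$ that the paper constructs independently in Lemma \ref{fourier} and Proposition \ref{PropOrthDim3} for the adjacent purpose of showing that a stochastic realization with an isospectral decomposition exists in dimension three; so you have rediscovered a tool the paper itself deploys just afterwards.
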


Note that for real-valued $\lambda_k$, $k=1,2,3$ the condition \eqref{LL3} is automatically fulfilled by the
relation between arithmetic and geometric means. We further need the following auxiliary lemma.

\begin{lemma}\label{lem:stochastic_nn} \textup{\cite[Lemma 5.3.2]{Xu98}}
Let $A\in\mathbb{R}^{n,n}_{\geq 0}$ with eigenvalues $\{ \lambda_1, ..., \lambda_n \}$ and spectral radius $\lambda_1$. 
Then there exists $B\in\mathbb{R}^{n,n}_{\geq 0}$ with same eigenvalues such that 
\begin{align*} 
B 1_n=\lambda_1 \, 1_n. 
\end{align*} 
\end{lemma}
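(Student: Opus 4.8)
The plan is to obtain $B$ as a limit of diagonal similarity transforms of strictly positive perturbations of $A$. I would first dispose of the easy case: if $M \in \mathbb{R}^{n,n}_{\geq 0}$ is irreducible (in particular if $M > 0$ entrywise), then by the Perron--Frobenius theorem $\rho(M)$ is an eigenvalue of $M$ admitting a strictly positive right eigenvector $u$, and then $\diag(u)^{-1} M \diag(u)$ is nonnegative, similar to $M$, and sends $1_n$ to $\diag(u)^{-1} M u = \rho(M)\,1_n$. This already proves the lemma whenever $A$ has a positive Perron eigenvector, so the only real work is the reducible case, which I would handle by perturbation so that no case distinction is in fact needed.

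For general $A$, fix the rank-one matrix $E \coloneqq 1_n 1_n^\tT$ and set $A_\varepsilon \coloneqq A + \varepsilon E > 0$ for $\varepsilon \in (0,1]$. By the positive case there is a positive eigenvector $u_\varepsilon$ with $A_\varepsilon u_\varepsilon = \rho(A_\varepsilon) u_\varepsilon$, and $B_\varepsilon \coloneqq \diag(u_\varepsilon)^{-1} A_\varepsilon \diag(u_\varepsilon)$ is nonnegative with $B_\varepsilon 1_n = \rho(A_\varepsilon)\,1_n$. In particular every row of $B_\varepsilon$ is nonnegative and sums to $\rho(A_\varepsilon)$, and since the spectral radius depends continuously on the matrix entries we have $\rho(A_\varepsilon) \to \rho(A) = \lambda_1$ as $\varepsilon \downarrow 0$; hence the entries of $B_\varepsilon$ are uniformly bounded and $\{B_\varepsilon\}_{\varepsilon \in (0,1]}$ lies in a compact subset of $\mathbb{R}^{n,n}_{\geq 0}$. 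Choosing a sequence $\varepsilon_k \downarrow 0$ with $B_{\varepsilon_k} \to B$ gives $B \geq 0$ and $B 1_n = \lambda_1 1_n$.

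It then remains to verify that $B$ has the prescribed eigenvalues. Since $B_\varepsilon$ is similar to $A_\varepsilon$ we have $\det(x I_n - B_\varepsilon) = \det(x I_n - A_\varepsilon)$ for every $x$; passing to $\varepsilon = \varepsilon_k \to 0$ and using continuity of the determinant in the matrix entries yields $\det(x I_n - B) = \det(x I_n - A)$, so $A$ and $B$ share their characteristic polynomial and hence their eigenvalues. The step that needs the most care is the compactness argument: a priori the conjugating matrices $\diag(u_\varepsilon)$ need not converge and may degenerate as $\varepsilon \to 0$, so one must run the argument on the $B_\varepsilon$ themselves rather than on the $u_\varepsilon$, relying on the row-sum identity $B_\varepsilon 1_n = \rho(A_\varepsilon)\,1_n$ to keep the $B_\varepsilon$ bounded irrespective of how $u_\varepsilon$ behaves.
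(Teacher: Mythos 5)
The paper itself gives no proof of this lemma; it is cited directly from \cite[Lemma 5.3.2]{Xu98}, so there is no in-paper argument to compare against. Your proof is correct and self-contained. The positive/irreducible case via Perron--Frobenius and the diagonal similarity $\diag(u)^{-1} A \diag(u)$ is standard, and the perturbation $A_\varepsilon = A + \varepsilon\, 1_n 1_n^\tT > 0$ handles the reducible case cleanly. The key observation — that $B_\varepsilon \geq 0$ together with the exact row-sum identity $B_\varepsilon 1_n = \rho(A_\varepsilon) 1_n$ and continuity of $\rho$ keep the family $\{B_\varepsilon\}$ uniformly bounded, so one extracts a limit of the $B_\varepsilon$ rather than of the possibly degenerating scalings $u_\varepsilon$ — is exactly the right way to make the limiting argument rigorous. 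Passing the similarity to the limit through the characteristic polynomial $\det(xI_n - B_\varepsilon) = \det(xI_n - A_\varepsilon)$ correctly preserves the spectrum with multiplicities, which is all the lemma requires (similarity of $B$ to $A$ is not claimed and need not hold, e.g.\ when $A$ is nilpotent and the limit forces $B = 0$). No gaps.
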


Then we can simply deduce the following corollary. 

\begin{corollary} \label{CorNonnegToStoch}
\begin{itemize}
\item[\textup{i)}] Let $\Lambda = \{1,\lambda_2,\lambda_3\}$ with $\lambda_2, 
\lambda_3 \in\mathbb{R}$. 
Then there exists $A\in \rst(3)$ with spectrum $\Lambda$ if and only if  
$\lambda_2,\lambda_3 \in [-1,1]$ and $\lambda_2 + \lambda_3 \geq -1$.
\item[\textup{ii)}] Let $\Lambda = \{1,\lambda, \bar \lambda\}$ with $\lambda 
\not \in\mathbb{R}$. 
Then there exists $A\in \rst(3)$ with spectrum $\Lambda$ if and only if $\lambda \in \Theta_3$, where
\begin{align} \label{set}
\Theta_{3} 
&\coloneqq\bigl\{\lambda_R + i \lambda_I: \, \lambda_R \in [-\tfrac12,1], \, 
(\lambda_R -1)^2 \ge 3 \lambda_I^2\bigr\} \\
 &=  \mathrm{conv} \bigl\{1, \theta,\theta^2 \bigr\}, \quad \theta = \e^{-2\pi 
 i/3} = -\frac12 - \frac{\sqrt{3}}{2} i. 
\end{align}
and $\mathrm{conv}$ denotes the convex hull. 
\end{itemize}
\end{corollary}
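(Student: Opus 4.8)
The plan is to reduce the problem for stochastic $3\times3$ matrices to the nonnegative inverse eigenvalue problem and to read off the answer from Theorem~\ref{LoewyLondon}. The bridge is the following observation: if $\Lambda = \{1,\lambda_2,\lambda_3\}$ with $|\lambda_2|,|\lambda_3|\le 1$, then $1$ is the spectral radius of every matrix with spectrum $\Lambda$, so whenever some $A\in\mathbb{R}^{3,3}_{\ge0}$ has spectrum $\Lambda$, Lemma~\ref{lem:stochastic_nn} (applied with $\lambda_1 = 1$) produces a nonnegative matrix $B$ with the same spectrum and $B1_3 = 1_3$, i.e.\ $B\in\rst(3)$. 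Conversely, every $A\in\rst(3)$ is nonnegative and, by the Perron--Frobenius theorem, has all eigenvalues of modulus at most $1$. Hence, in both parts it remains to determine when conditions \eqref{LL1}--\eqref{LL3} hold for $\Lambda = \{1,\lambda_2,\lambda_3\}$ jointly with $|\lambda_2|,|\lambda_3|\le1$, noting that the latter makes \eqref{LL1} automatic because then $\max_k|\lambda_k| = 1\in\Lambda$.

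For part~i), with real $\lambda_2,\lambda_3$, condition \eqref{LL2} reads $1+\lambda_2+\lambda_3\ge0$, i.e.\ $\lambda_2+\lambda_3\ge-1$, and \eqref{LL3} holds automatically by the remark after Theorem~\ref{LoewyLondon}. Thus the Loewy--London conditions together with $\lambda_2,\lambda_3\in[-1,1]$ amount exactly to the stated criterion, and both implications follow from the bridge above: ``if'' by first building a nonnegative matrix via Theorem~\ref{LoewyLondon} and then a stochastic one via Lemma~\ref{lem:stochastic_nn}; ``only if'' from Perron--Frobenius together with Theorem~\ref{LoewyLondon} applied to the nonnegative matrix $A$.

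For part~ii), write $\lambda = \lambda_R + i\lambda_I$ with $\lambda_I\neq0$ and compute $\lambda_1+\lambda_2+\lambda_3 = 1+2\lambda_R$ and $\lambda_1^2+\lambda_2^2+\lambda_3^2 = 1+2\lambda_R^2-2\lambda_I^2$. Then \eqref{LL2} becomes $\lambda_R\ge-\tfrac12$, and a short rearrangement shows \eqref{LL3} is equivalent to $(\lambda_R-1)^2\ge3\lambda_I^2$. Two elementary facts complete the argument. First, for $\lambda_R\in[-\tfrac12,1]$ the inequality $(\lambda_R-1)^2\ge3\lambda_I^2$ forces $\lambda_R^2+\lambda_I^2\le1$: indeed it gives $3(\lambda_R^2+\lambda_I^2)\le3\lambda_R^2+(\lambda_R-1)^2 = 4\lambda_R^2-2\lambda_R+1$, and $4\lambda_R^2-2\lambda_R+1\le3$ is equivalent to $(2\lambda_R+1)(\lambda_R-1)\le0$, which holds on $[-\tfrac12,1]$. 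Hence $\Theta_3$ lies in the closed unit disk, so \eqref{LL1} and the stochasticity bound $|\lambda|\le1$ are again automatic. Second, the region $\{\lambda_R\in[-\tfrac12,1],\ (\lambda_R-1)^2\ge3\lambda_I^2\}$ is precisely the triangle $\mathrm{conv}\{1,\theta,\theta^2\}$, since its boundary consists of the segment $\{\lambda_R = -\tfrac12\}$ joining $\theta$ and $\theta^2$ and of the two lines $\lambda_I = \pm(1-\lambda_R)/\sqrt3$ through $1$ and $\theta^2$, respectively $1$ and $\theta$. Combining both facts with the bridge and with \eqref{LL2}, \eqref{LL3} yields the assertion.

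The argument is largely mechanical given Theorem~\ref{LoewyLondon} and Lemma~\ref{lem:stochastic_nn}. The only points demanding a little care are keeping the passage between ``nonnegative'' and ``stochastic'' correct --- i.e.\ ensuring that $1$ is genuinely the spectral radius, so that Lemma~\ref{lem:stochastic_nn} is applicable --- and checking that the analytic description of $\Theta_3$ sits inside the unit disk, so that the bound $|\lambda_k|\le1$ contributes nothing on top of \eqref{LL1}--\eqref{LL3}.
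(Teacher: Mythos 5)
Your proposal takes essentially the same route as the paper: reduce to the nonnegative inverse eigenvalue problem via Theorem~\ref{LoewyLondon}, translate conditions \eqref{LL1}--\eqref{LL3} into the stated inequalities, and pass between nonnegative and stochastic matrices using Lemma~\ref{lem:stochastic_nn} and Perron--Frobenius. The only difference is that you spell out two small points the paper leaves implicit, namely that $\lambda_R\in[-\tfrac12,1]$ and $(\lambda_R-1)^2\ge 3\lambda_I^2$ already force $|\lambda|\le 1$ (so the Perron--Frobenius/\eqref{LL1} bound is automatic on $\Theta_3$), and the identification of $\Theta_3$ with $\mathrm{conv}\{1,\theta,\theta^2\}$; both are correct and tighten the argument.
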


The set $\Theta_3 \cup [-1,1]$ is depicted in Fig. \ref{Theta3}.

  \begin{figure}[h]
	\centering
	\includegraphics[width=55mm, scale=0.5]{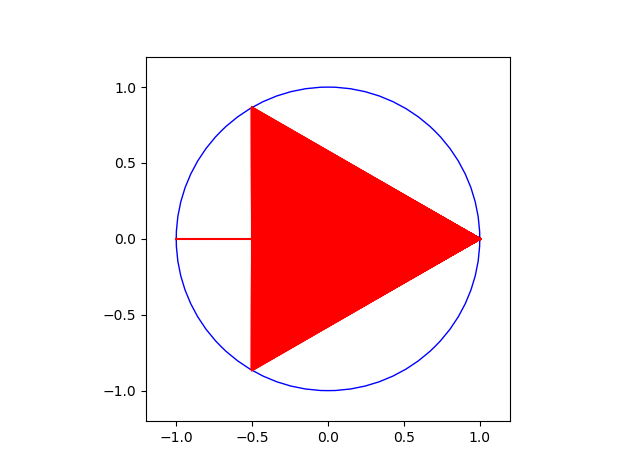}
	\caption{The set $\Theta_3 \cup [-1,1]$.}
	\label{Theta3}
	\end{figure}

\begin{proof}
i)   By Theorem \ref{LoewyLondon}, there exists a matrix 
$A\in\mathbb{R}_{\ge 0}^{3,3}$ with spectrum $\Lambda$ 
if and only if the two conditions in Part i) are fulfilled.
Then we obtain the assertion by Lemma \ref{lem:stochastic_nn}.
 
ii) Let $\lambda_2 = \lambda_R + i \lambda_I$,  $\lambda_I \neq 0$. 
We prove that \eqref{LL1}-\eqref{LL3} hold true if and only if $\lambda_2 \in \Theta_3$. 
Then, the assertion follows again by Lemma \ref{lem:stochastic_nn}. 
 
For our setting we have that 
\eqref{LL1} is equivalent to $|\lambda_2|\leq 1$,
\eqref{LL2} to $\lambda_R \ge -\frac12$, and
\eqref{LL3} to
$$
(1+2\lambda_R )^2 \leq 3(1+ (\lambda_R +i \lambda_I)^2 + (\lambda_R -i \lambda_I)^2) = 6\lambda_R^2-6\lambda_I^2+3,  
$$
i.e. $\lambda_I^2 \leq \frac13 (\lambda_R-1)^2$.
This yields the assertion.
\end{proof}

We will use the set of circulant $n \times n$ matrices 
$$
\bigl\{\mathrm{circ} (a) = a_0 I_n + a_1 P_n +  \ldots + a_{n-1} P_n^{n-1}: a = 
(a_0,\ldots,a_{n-1})^\tT \in \mathbb R^n\bigr\}
$$
with the $n$-th shift matrix
$$
P_n \coloneqq \left( \begin{array}{cc}
0_{n-1} & 1  \\ 
I_{n-1} & 0_{n-1}^\tT \\ 
\end{array} \right).
$$
Clearly, $P_n$ is double stochastic. Note that the double stochastic matrices are the convex hull of the permutation matrices due to the Theorem of Birkhoff and Von Neumann.
The circulant matrices can be diagonalized by the $n$-th Fourier matrix
$F_n \coloneqq (\e^{-2\pi i jk/n})_{j,k=0}^{n-1}$, i.e.,
\begin{equation}\label{circ_d}
\mathrm{circ} (a) = F_n^{-1} \, \mathrm{diag} (F_n a) \, F_n
\end{equation}
see \cite{PPST19}. Note that $F_n^{-1} = \frac{1}{n} \bar F_n = \frac{1}{n} \bar F_n^\tT$.
Then it is easy to check the following lemma.

\begin{lemma} \label{fourier}
Let $n \in \mathbb N$ be odd, $m = (n-1)/2$ and $\Lambda = (1,\lambda_1, \ldots, \lambda_m,\bar \lambda_m, \ldots, \bar \lambda_1)^\tT$.
Then the eigenvalues of
$
B \coloneqq \mathrm{circ} \left( F_n^{-1} \Lambda \right) 
$
are the components of $\Lambda$, $B$ is real-valued and the rows of $B$ sum up to 1. 
\end{lemma}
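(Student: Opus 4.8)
\textit{Proof idea.} The plan is to verify the three assertions one after another; all of them follow from the diagonalization formula \eqref{circ_d} together with the self-conjugacy built into $\Lambda$. Put $a \coloneqq F_n^{-1}\Lambda$, so that $B = \mathrm{circ}(a)$. Then \eqref{circ_d} gives
$$
B = F_n^{-1}\,\diag(F_n a)\,F_n = F_n^{-1}\,\diag(\Lambda)\,F_n,
$$
hence $B$ is similar to $\diag(\Lambda)$ and its eigenvalues are precisely the components of $\Lambda$.

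For the row sums, recall $\mathrm{circ}(a) = \sum_{j=0}^{n-1} a_j P_n^{\,j}$, and each power $P_n^{\,j}$ is a permutation matrix, hence has all row sums equal to $1$. Therefore every row of $B$ sums to $\sum_{j=0}^{n-1} a_j$. Since the zeroth row of $F_n = (\e^{-2\pi i jk/n})_{j,k}$ is $1_n^\tT$, we obtain $\sum_{j=0}^{n-1} a_j = (F_n a)_0 = \Lambda_0 = 1$, as claimed.

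It remains to see that $B$ is real-valued; since a circulant matrix $\mathrm{circ}(a)$ is real exactly when $a \in \mathbb{R}^n$, it suffices to show that $a = F_n^{-1}\Lambda$ is real. Using $F_n^{-1} = \tfrac1n \bar F_n$ we have $a_j = \tfrac1n\sum_{k=0}^{n-1}\Lambda_k\,\e^{2\pi i jk/n}$, hence
$$
\overline{a_j} = \tfrac1n\sum_{k=0}^{n-1}\overline{\Lambda_k}\,\e^{-2\pi i jk/n}.
$$
The key observation is that the prescribed ordering of $\Lambda$ — namely $\Lambda_0 = 1$, $\Lambda_k = \lambda_k$ and $\Lambda_{n-k} = \bar\lambda_k$ for $k = 1,\dots,m$, where $n = 2m+1$ — is exactly the identity $\overline{\Lambda_k} = \Lambda_{(n-k)\bmod n}$ for all $k$. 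Substituting $k \mapsto (n-k)\bmod n$ in the sum and using $\e^{-2\pi i j(n-k)/n} = \e^{2\pi i jk/n}$ then yields $\overline{a_j} = \tfrac1n\sum_k \Lambda_k\,\e^{2\pi i jk/n} = a_j$, so $a \in \mathbb{R}^n$ and $B$ is real.

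I do not expect a genuine obstacle here: the lemma is essentially a bookkeeping exercise with the discrete Fourier transform. The only point requiring a little care is the index arithmetic modulo $n$ in the identity $\overline{\Lambda_k} = \Lambda_{(n-k)\bmod n}$, which is precisely where the oddness of $n$ and the prescribed pairing of the $\lambda_j$ in $\Lambda$ enter.
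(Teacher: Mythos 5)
Your proof is correct and takes essentially the same route as the paper: the eigenvalue claim follows from \eqref{circ_d}, realness follows from the self-conjugate structure of $\Lambda$ (the paper pairs the conjugate terms explicitly, you phrase it via the equivalent symmetry $\overline{\Lambda_k}=\Lambda_{(n-k)\bmod n}$ and a change of summation index), and the row-sum claim reduces in both cases to $\sum_j a_j=\Lambda_0=1$ (the paper computes $1_n^\tT F_n^{-1}\Lambda$ directly, you observe $(F_n a)_0=\Lambda_0$). The differences are cosmetic bookkeeping, not a different argument.
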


\begin{proof}
By \eqref{circ_d} the matrix $B$ has the entries of $\Lambda$ as eigenvalues.
Further
$$ F_n^{-1} \Lambda  
= \frac1n \left( 1 + \sum_{j=1}^m \left( \e^{2\pi i jk/n} \lambda_j +  \e^{- 2\pi i jk/n} \bar \lambda_j \right)  \right)_{k=0}^{n-1}
$$
is real-valued and 
$$
1_n^\tT F_n^{-1} \Lambda 
= \frac1n 
\Big( 
\big(
\sum_{k=0}^{n-1} \e^{2\pi i jk/n} 
\big)_{j=0}^{n-1}
\Big)^\tT \Lambda = (1,0,\ldots ,0)^\tT \Lambda = 1.
$$
\end{proof}

Now we can prove the following proposition.

\begin{proposition}\label{PropOrthDim3}
For any $A \in \mathcal{S}(3)$, there exists a matrix $B \in \mathcal{S}(3)$ with the same eigenvalues 
which can be decomposed as in  \eqref{isospect}.
If $A \in \mathcal{S}(3)$ has an eigenvalue $\lambda = \lambda_R + i \lambda_I$, $\lambda_I \not = 0$, 
then the matrix
$$
B = \mathrm{circ} \bigl(F_3^{-1} (1,\lambda,\bar \lambda)^\tT\bigr) \in \rst(3),
$$
has the same eigenvalues as $A$ and possesses the decomposition
$$
B = 
	Q \left( \begin{array}{rrr}
1 & 0&0  \\ 
0 & \lambda_R & \lambda_I \\ 
0 & -\lambda_I& \lambda_R
\end{array} \right) 
Q^\tT
$$
where $Q$ is any matrix of the form \eqref{form_p}. 
\end{proposition}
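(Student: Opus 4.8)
The plan is to distinguish two cases according to the non-Perron part of the spectrum of $A \in \rst(3)$, which — since $1$ is always an eigenvalue of a stochastic matrix — consists either of two real numbers or of a complex-conjugate pair. \emph{Real case.} If all eigenvalues of $A$ are real, I would write $\Lambda = (1,\lambda_2,\lambda_3)^\tT$ with $\lambda_2,\lambda_3 \in \mathbb R$. Then $t = 0$ in the notation of Theorem~\ref{thm:Schur}, so $\tilde D(\Lambda) = D(\Lambda) = \diag(1,\lambda_2,\lambda_3)$ and $\mathcal V_0$ is just the set of strictly upper triangular $3\times 3$ matrices. Applying Theorem~\ref{thm:Schur}ii) with this labelling of the eigenvalues yields $A = Q(D(\Lambda) + V)Q^\tT$ with $Q \in \Orth(3)$ and $V \in \mathcal V_0$, i.e. $A \in \mathrm{Iso}(\Lambda)$ already, and one simply takes $B = A$.

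\emph{Complex case.} Otherwise $A$ has an eigenvalue $\lambda = \lambda_R + i\lambda_I$ with $\lambda_I \ne 0$, so $\Lambda = (1,\lambda,\bar\lambda)^\tT$ and, since $A \in \rst(3)$, Corollary~\ref{CorNonnegToStoch}ii) gives $\lambda \in \Theta_3$. I would then set $B \coloneqq \mathrm{circ}\bigl(F_3^{-1}(1,\lambda,\bar\lambda)^\tT\bigr)$. By Lemma~\ref{fourier} (applied with $n=3$, $m=1$), $B$ is real-valued, has eigenvalues $1,\lambda,\bar\lambda$, and has unit row sums, so the only thing left to check for $B \in \rst(3)$ is nonnegativity. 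For this I would compute the generating vector explicitly, namely $F_3^{-1}(1,\lambda,\bar\lambda)^\tT = \tfrac13\bigl(1+2\lambda_R,\; 1-\lambda_R-\sqrt 3\,\lambda_I,\; 1-\lambda_R+\sqrt 3\,\lambda_I\bigr)^\tT$, and observe that its three entries are all nonnegative exactly when $\lambda_R \ge -\tfrac12$ and $(\lambda_R-1)^2 \ge 3\lambda_I^2$, i.e. precisely when $\lambda \in \Theta_3$ by \eqref{set}. Hence $B \in \rst(3)$ with the same spectrum as $A$, which settles the first assertion.

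\emph{The isospectral decomposition of $B$.} Finally, let $Q$ be any matrix of the form \eqref{form_p}, so $q_1+q_2+q_3 = 0$ and $q_1^2+q_2^2+q_3^2 = 1$ (which is satisfiable, e.g. $(q_1,q_2,q_3) = \tfrac{1}{\sqrt 2}(1,-1,0)$); these two relations also give $q_1q_2+q_2q_3+q_3q_1 = -\tfrac12$, from which a short computation shows the columns $q^{(1)},q^{(2)},q^{(3)}$ of $Q$ are orthonormal, so $Q \in \Orth(3)$. Since $B$ is stochastic, $Bq^{(1)} = q^{(1)} = \tfrac{1}{\sqrt 3}1_3$. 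Since $B$ is circulant, \eqref{circ_d} together with $F_3\bigl(F_3^{-1}(1,\lambda,\bar\lambda)^\tT\bigr) = (1,\lambda,\bar\lambda)^\tT$ shows that the eigenvector of $B$ for the eigenvalue $\lambda$ is a multiple of $w \coloneqq (1,\omega,\omega^2)^\tT$ with $\omega = \e^{2\pi i/3}$. Eliminating $q_3 = -(q_1+q_2)$, I would verify the identity $q^{(2)} + i\,q^{(3)} = c\,w$ with $c = q_1 - \tfrac{i}{\sqrt 3}(q_1+2q_2)$, where $c \ne 0$ because $c = 0$ forces $q_1 = q_2 = q_3 = 0$. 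Consequently $B(q^{(2)}+i\,q^{(3)}) = \lambda(q^{(2)}+i\,q^{(3)})$, and splitting into real and imaginary parts gives $Bq^{(2)} = \lambda_R q^{(2)} - \lambda_I q^{(3)}$ and $Bq^{(3)} = \lambda_I q^{(2)} + \lambda_R q^{(3)}$. Together with $Bq^{(1)} = q^{(1)}$, these identities are exactly $BQ = QD(\Lambda)$ with $D(\Lambda)$ the matrix displayed in the proposition, whence $B = QD(\Lambda)Q^\tT \in \mathrm{Iso}(\Lambda)$ (with $V = 0 \in \mathcal V_1$).

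\emph{Main obstacle.} The only step that is more than routine bookkeeping is the last identification: showing that, for the circulant matrix $B$, the real two-dimensional invariant subspace associated with $\{\lambda,\bar\lambda\}$ is spanned by the last two columns of \emph{every} admissible $Q$ of the form \eqref{form_p} — i.e. establishing $q^{(2)} + i\,q^{(3)} = c\,(1,\omega,\omega^2)^\tT$ with $c\ne 0$ for all valid $(q_1,q_2,q_3)$. The real case, the nonnegativity characterization (which is exactly where the hypothesis $A \in \rst(3)$, through $\lambda \in \Theta_3$, is used), and the orthonormality of $Q$ all follow directly from the results already established.
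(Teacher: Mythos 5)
Your proof is correct, and it mirrors the paper's construction up to the last step: both treat the all-real case via the real Schur decomposition, both build $B = \mathrm{circ}\bigl(F_3^{-1}(1,\lambda,\bar\lambda)^\tT\bigr)$, invoke Lemma~\ref{fourier} for the real-valuedness, eigenvalues, and row sums of $B$, and obtain nonnegativity of the generating vector from $\lambda \in \Theta_3$ via Corollary~\ref{CorNonnegToStoch}\,ii). Where you diverge is in verifying $B = Q\,D(\Lambda)\,Q^\tT$. The paper computes $Q^\tT P_3 Q$ directly to be $\mathrm{blockdiag}\bigl(1,R_{120^\circ}\bigr)$ and then assembles $Q^\tT B Q = b_0 I + b_1\,Q^\tT P_3 Q + b_2\,(Q^\tT P_3 Q)^2$ from the circulant representation; you instead identify the Fourier eigenvector $w = (1,\omega,\omega^2)^\tT$ of $B$ for the eigenvalue $\lambda$, show that $q^{(2)} + i\,q^{(3)} = c\,w$ with $c = q_1 - \tfrac{i}{\sqrt 3}(q_1+2q_2) \ne 0$ (I checked: expanding $c$, $c\omega$, $c\omega^2$ indeed reproduces $q^{(2)}+i\,q^{(3)}$ componentwise, using $q_3=-(q_1+q_2)$), and then split $B(q^{(2)}+i\,q^{(3)}) = \lambda(q^{(2)}+i\,q^{(3)})$ into real and imaginary parts to read off $BQ = Q\,D(\Lambda)$. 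The two verifications are computationally comparable, but yours is a bit more structural: it makes transparent \emph{why} any $Q$ of the form~\eqref{form_p} block-diagonalizes every $3\times 3$ circulant, namely because its last two columns span the real invariant plane of the Fourier eigenvector pair, whereas the paper's $Q^\tT P_3 Q$ computation is a brute-force check that reveals the same fact less explicitly.
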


\begin{proof}
1. If the eigenvalues of $A$ are real-valued, then the assertion follows from Theorem \ref{thm:Schur} i).
\\
2. Assume that the eigenvalues of $A$ are the entries of the vector 
$\Lambda = (1,\lambda,\bar \lambda)$.
By Lemma \ref{fourier}, the matrix $B$ is real-valued, has the same eigenvalues as $A$ and its rows sum up to 1.
Moreover, we have by Corollary \ref{CorNonnegToStoch}ii) that
$$
b \coloneqq F_3^{-1} \begin{pmatrix} 1\\\lambda\\ \bar \lambda\end{pmatrix} = 
\frac13 
\left(
\begin{array}{lll}
1+\lambda + \bar \lambda\\
1 + \bar \theta \lambda +  \theta \bar \lambda\\
1 + \theta \lambda + \bar \theta  \bar \lambda
\end{array} 
\right)
 = 
\frac13 
\left(
\begin{array}{lll}
1+2 \lambda_R\\
1 - \lambda_R - \sqrt{3} \lambda_I\\
1 - \lambda_R + \sqrt{3} \lambda_I
\end{array} 
\right)
 \in \mathbb R_{\ge 0}^3.
$$
By straightforward computation, we obtain for $Q \in \Orth(3)$ as in \eqref{form_p} that
$$
Q^\tT P_3 Q = \left(
\begin{array}{rrr}
1&0&0\\
0& -\frac12& -\frac{\sqrt{3}}{2}\\
0& \frac{\sqrt{3}}{2}&-\frac12
\end{array} 
\right).
$$
so that
$$
Q^\tT B Q = 
\left(
\begin{array}{ccc}
b_0+b_1+b_2&0&0\\
0& b_0 -\frac12(b_1+b_2)& -\frac{\sqrt{3}}{2} (b_1-b_2)\\
0& \frac{\sqrt{3}}{2}(b_1-b_2)&b_0 -\frac12(b_1+b_2)
\end{array} 
\right)
=
\left( \begin{array}{rrr}
1 & 0&0  \\ 
0 & \lambda_R & \lambda_I \\ 
0 & -\lambda_I& \lambda_R
\end{array} \right) .
$$
\end{proof}

The matrix $B$ in the above proposition is bistochastic.
Finally, let us give an example of a stochastic matrix such that there does not exist a bistochastic matrix with the same eigenvalues.

\begin{example}\label{ex:[-1,0,1]}
By Theorem \textup{\ref{LoewyLondon}} there exist stochastic matrices with 
eigenvalues $\{1,0,  \\ -1\}$, for example
\begin{align*}
\left( \begin{array}{ccc}
0 & 1 & 0 \\
0 & 0 & 1 \\
0 & 1 & 0
\end{array} \right) \quad \mathrm{or} \quad 
\left( \begin{array}{ccc}
0 & 1 & 0 \\
1 & 0 & 0 \\
u & v & 0
\end{array} \right)
\end{align*}
with $u,v\geq 0$ and $u+v=1$.

However, there is no bistochastic matrix with these eigenvalues.
Otherwise, it would have trace 0 and is therefore of the form 
\begin{align*}
B = \left(
\begin{array}{ccc}
0 & a & 1-a \\
1-a & 0 & a \\
a & 1-a & 0
\end{array}
\right), \qquad a \in [0,1].
\end{align*}
The characteristic polynomial of $B$ is 
$-\lambda^3 + a^3 + (1-a)^3 + 3a\lambda(1-a)$. 
Since $0$ is an eigenvalue, we get $0 = a^3 + (1-a)^3=1-3a+3a^2$. This quadratic equation has 
no real-valued solution, which contradicts our assumption.
\hfill $\Box$
\end{example}

\section{Old and New Model} \label{sec:model}
%
Throughout this section, let 
$\lambda_1,\ldots,\lambda_s \in \mathbb R$ 
and conjugate pairs of complex numbers
$\lambda_{s+1}, \bar \lambda_{s+1}, \ldots, \lambda_n, \bar \lambda_n$ 
be given. 
Set $t \coloneqq \frac{n-s}{2}$. 
We define the vector $\Lambda$ whose components are these numbers as in \eqref{self-adj}
and $D(\Lambda)$ and $\mathcal{V}_t$ as in Theorem \ref{thm:Schur}i).

Then the authors of \cite{ZJB2016}
considered the optimization problem
\begin{align*}
\min_{S,Q,V} \ \widetilde{F}(S,Q,V) \quad \mathrm{ subject \; to} \quad  (S,Q,V)\in \widetilde{\mathcal{M}}_t
\end{align*}
where
\begin{align} \label{problem:old}
\widetilde{F}(S,Q,V) 
\coloneqq 
\frac{1}{2} \| S \circ S - Q(D(\Lambda)+V)Q^\tT\|_F^2, 
\end{align}
and
$$
\widetilde{\mathcal{M}}_t \coloneqq \mathcal{OB}(n) \times \Orth(n) \times \mathcal V_t,
$$
with 
\begin{align*}
\mathcal{OB}(n) \coloneqq \{ S\in \mathbb{R}^{n,n}: \mathrm{diag}(SS^\tT)=1_n\}.
\end{align*}
Clearly, 
$\rst(n) = \{ S \circ S: S\in \OB \}$.
By \cite{ZJB2016}, the level sets of $\widetilde{F}$ are compact, 
so that there exists a minimizer of $\widetilde{F}$. 

However, as emphasized in Remark \ref{rem:wrong}, even if $\lambda_k$, $k=1,\ldots,n$, are the eigenvalues of a
stochastic matrix $A$, this matrix may not be any solution \eqref{problem:old}.
Even worse, it is not clear if there exists any stochastic matrix such that the functional becomes zero.
Therefore, we propose to consider an extended problem which is based on the following considerations.
Let $\mu^{[2]} \in \R^{2,2}$ with eigenvalues $\lambda = \lambda_R \pm i \lambda_I$, $\lambda_I \not = 0$.
Then there exists an invertible matrix $T \in \mathrm{SL}(2)$ of determinant 1 such that
\begin{align}
\label{2x2Similarity}
\mu^{[2]} = T \lambda^{[2]} T^{-1}, \qquad \lambda^{[2]} = \left( 
\begin{array}{rr} 
\lambda_R&\lambda_I\\ 
-\lambda_I& \lambda_R
\end{array} 
\right). 
\end{align}
Moreover, it follows from the $QR$ decomposition of matrices that such a matrix $T$ can be uniquely decomposed as 
$$
T = Q \, T_{\alpha,\beta} ,\qquad T_{\alpha,\beta} =
\left( 
\begin{array}{rr} 
\alpha&\beta\\ 
0&\frac{1}{\alpha}
\end{array} 
\right), \qquad \alpha \in \mathbb R_{>0}, \ \ \beta \in \mathbb R, \ \ Q \in \Orth(2),
$$
cf. \cite{Ro06}. 
We obtain 
\begin{equation}\label{decomp}
\mu^{[2]} = Q \, T_{\alpha,\beta} \, \lambda^{[2]} \, T_{\alpha,\beta}^{-1} \, Q^\tT, \qquad 
T_{\alpha,\beta}^{-1} = 
\left(
\begin{array}{rr} 
\frac{1}{\alpha}&-\beta\\ 
0&\alpha
\end{array} 
\right) = T_{\frac{1}{\alpha},-\beta}.
\end{equation}
As a consequence we have by Theorem \ref{thm:Schur}ii) that every (stochastic) matrix 
with eigenvalues in $\Lambda$ can be written in the form
\begin{equation} \label{geht}
Q \, T_{ a b} \, \left( D(\Lambda) + V\right) \, T_{ a  b}^{-1} \, Q^\tT
\end{equation} 
with $Q \in \Orth(n)$ and a blockdiagonal matrix 
$$T_{ a b} = \mathrm{blockdiag} (I_s,T_{a_1b_1} ,\ldots,T_{a_tb_t}),$$
where
$$
a \coloneqq (a_1,\ldots,a_t) \in \mathbb R_{>0}^t, \quad  b \coloneqq (b_1,\ldots,b_t) \in \mathbb R^t.
$$

\begin{example}\label{ex:33_continued}
We continue Example \ref{ex:33}. We have seen that the matrix $A$ in \eqref{Counterexample3x3} 
does not have a decomposition \eqref{isospect}, but can of course be decomposed as in \eqref{geht} with
$a\approx 0.81636$, $b=0$ and
\begin{align} \label{Counterexample3x3_cont}
Q \approx \left(\begin{array}{ccc}
0.57735 & 0.78868 & 0.21132 \\
0.57735 & -0.57735 & 0.57735 \\
0.57735 & -0.21132 & 0.78868
\end{array}\right), \quad
V \approx \left(\begin{array}{ccc}
1 & 0.42152 & 0.42834 \\
0 & 0 & 0 \\
0 & 0 & 0
\end{array}\right).
\end{align}
\end{example}

Instead of problem \eqref{problem:old}, we consider
$F: \R^{n,n}\times\R^{n,n}\times\R^{n,n}\times\R_{>0}^t\times\R^t \rightarrow \mathbb R$ defined by 
\begin{align}
\label{functional:new}
F(S,Q,V, a, b) &\coloneqq \frac{1}{2} \| S \circ S - Q \, T_{ a  b} \, \left( D(\Lambda)+V \right) \, T^{-1}_{ a  b} \, Q^\tT\|_F^2
\end{align}
and propose to solve
\begin{align} \label{model:new}
\min_{S,Q,V,a,b} F(S,Q,V,a,b) \quad \mathrm{ subject \; to} \quad  (S,Q,V, a, b) \in\mathcal{M}_t,
\end{align}
where 
$$
\mathcal{M}_t \coloneqq \mathcal{OB}(n) \times \Orth(n) \times \mathcal V_t \times \mathbb R^t_{>0} \times \mathbb R^t.
$$
If the entries of $\Lambda$ are the eigenvalues of a stochastic matrix, then the minimum of $F$ is zero.

In the following proposition, bounded sets on the manifold $\mathcal{M}_t$ address boundedness w.r.t.~the geodesic distance 
as introduced in Section \ref{sec:specmani}.

\begin{proposition}\label{lem:ExistenceMinimizers}
The function $F\colon 
\R^{n,n}\times\R^{n,n}\times\R^{n,n}\times\R_{>0}^t\times\R^t 
\rightarrow \mathbb R$ in \eqref{model:new} is lower level bounded, i.e.,
$$
\mathrm{lev}_c F \coloneqq \{ x = (S,Q,V, a, b) : F(x) \le c\}
$$
are bounded for all $c \in \mathbb R$, in particular the components of $a \in \R_{>0}^t$ are bounded away from zero.
Then $\mathrm{inf} \, F$ is attained and the set of minimizers is compact.
\end{proposition}

\begin{proof}
By orthogonality of $Q$ we have
$$
F(x) 
= \frac{1}{2} \| Q^\tT (S \circ S) Q -  T_{ a  b} \, \left( D(\Lambda)+V \right) \, T^{-1}_{ a  b} \|_F^2
= \frac{1}{2} \|Q^\tT (S \circ S) Q - (\tilde D(\Lambda,a,b) + \tilde V) \|_F^2,
$$
where $\tilde V \in \mathcal V_t$ and
$$
\tilde D(\Lambda,a,b) = \mathrm{blockdiag} (\lambda_1,\ldots,\lambda_s, B_1,\ldots,B_t),
$$
$$
B_k \coloneqq 
\left(
\begin{array}{rr}
\lambda_{s+k,R} - \frac{b_k}{a_k} \lambda_{s+k,I}& (a_k^2 + b_k^2) \lambda_{s+k,I}\\
-\frac{1}{a_k^2} \lambda_{s+k,I} & \lambda_{s+k,R} + \frac{b_k}{a_k} \lambda_{s+k,I}
\end{array}
\right), \qquad k=1,\ldots,t.
$$
Since $\mathcal{OB}(n) \times \Orth(n)$ is compact, we know that 
$Q^\tT (S \circ S) Q$ has bounded entries.
Therefore $F(X) \le c$ implies that the entries of $\tilde V$ as well as those of $B_k$, $k=1,\ldots,t,$ are bounded.
Since $\lambda_{s+k,I} \not = 0$,  we obtain that $a_k^2 + b_k^2$ and $\frac{1}{a_k^2}$ are bounded, which implies that 
$|b_k|$ is bounded and $a_k$ is bounded from above and away from zero.
In summary, the level sets of $F$ are bounded.
Since $F$ is continuous, the rest of the claim follows by standard arguments from optimization, see, e.g., \cite{BNO03}.
\end{proof}

\section{Geometric Conjugate Gradient Algorithms} \label{sec:algs}
%
We want to find a minimizer of $F\colon \mathcal M_t \rightarrow \mathbb R$ by 
applying a CG algorithm.

\paragraph{CG algorithm on linear spaces.}
To this end, we start with briefly recalling the CG-algorithm to solve a linear system of equations  $Ax =b$
with a symmetric, positive definite matrix $A$. The solution can be found by iteratively
minimizing the functional
\begin{equation} \label{simple}
f(x) = \frac12 x^\tT A x - b^\tT x.
\end{equation} 
The CG method is given in Algorithm \ref{alg:ConjugateGradientMethodUpdates}.

\begin{algorithm}[ht]    
\caption{CG algorithm for linear systems $Ax = b$} \label{alg:ConjugateGradientMethodUpdates}
\begin{algorithmic}   
    \State  \textbf{Input: }  $x^{(0)} \in\R^n$ 
    \State  \textbf{Initialization: } $d^{(0)} = -g^{(0)} = -\nabla f(x^{(0)}) = b - Ax^{(0)}$
    \For{$k = 0,\ldots$ until a stopping criterion is reached, }
    \vspace{0.5ex}
	\State $\hspace{2ex} \alpha^{(k)}  =  \frac{\bigl\langle -g^{(k)},d^{(k)} \bigr\rangle}{\bigl\langle A d^{(k)}, d^{(k)}\bigr\rangle} $
	\vspace{0.5ex}
 	\State $x^{(k+1)} = x^{(k)}+\alpha^{(k)} d^{(k)}$
 	\vspace{0.5ex}
	\State $g^{(k+1)} = \nabla f(x^{(k+1)}) = Ax^{(k+1)}-b$
	\vspace{0.5ex}
 	\State $\phantom{^{+1}}\beta^{(k)} = \frac{\bigl\langle g^{(k+1)},A d^{(k)}\bigr\rangle}{\bigl\langle d^{(k)}, Ad^{(k)} \bigr\rangle}$
 	\vspace{0.5ex}
 	\State $d^{(k+1)} = \ -g^{(k+1)} + \beta^{(k)} d^{(k)}$
 \EndFor
    \end{algorithmic}
\end{algorithm}

For the quadratic functional \eqref{simple}, 
the Hessian is given by $\nabla^2 f(x^{(k)}) = A$.
Further, it can be shown that $\beta^{(k)}$ can be rewritten as
\begin{align} \label{choice_1}
\beta^{(k)} 
&= \frac{\bigl\langle g^{(k+1)}, g^{(k+1)}\bigr\rangle}{\bigl\langle  g^{(k)}, g^{(k)} \bigr\rangle} \\
&=  \frac{\bigl\langle g^{(k+1)}, g^{(k+1)} - g^{(k)}\bigr\rangle}{\bigl\langle  g^{(k)}, g^{(k)} \bigr\rangle} \label{choice_2}.
\end{align}

The conjugate gradient algorithms for minimizing an arbitrary two times differentiable function 
$F\colon\mathbb R^n  \rightarrow \mathbb R$ differ by the choice of 
$\beta^{(k)}$ and $\alpha^{(k)}$ in Algorithm 
\ref{alg:NonlinearConjugateGradientMethodUpdates}.
Note that the values \eqref{choice_1} and \eqref{choice_2} differ in the general case.
For a good overview over different nonlinear CG algorithms see, e.g., \cite{HZ2005}.

\begin{algorithm}[ht]    
\caption{Generic CG algorithm for two times differentiable functionals 
$F:\R^n\rightarrow\R$} \label{alg:NonlinearConjugateGradientMethodUpdates}
\begin{algorithmic}   
    \State  \textbf{Input: }  $x^{(0)} \in\R^n$ 
    \State  \textbf{Initialization: } $d^{(0)} = -g^{(0)} = -\nabla F(x^{(0)})$
    \For{$k = 0,\ldots$ until a stopping criterion is reached, }
	\State Choose a step size $\alpha^{(k)}$
 	\State $x^{(k+1)} = x^{(k)}+\alpha^{(k)} d^{(k)}$
	\State $g^{(k+1)} = \nabla F(x^{(k+1)}) $
 	\State $ d^{(k+1)} = - g^{(k+1)} + \beta^{(k)} d^{(k)} - \theta^{(k)} \left( g^{(k+1)} - g^{(k)} \right)$ for certain coefficients $\beta^{(k)}$, $\theta^{(k)}$
 \EndFor
    \end{algorithmic}
\end{algorithm}

For $\theta^{(k)} = 0$ and
\begin{align*}
\beta^{(k)} = \frac{\bigl\langle \nabla^2 F(x^{(k+1)})d^{(k)},g^{(k+1)}\bigr\rangle}{\bigl\langle \nabla^2 F(x^{(k+1)}d^{(k)},d^{(k)}\bigr\rangle} ,
\end{align*}
the resulting algorithm was proposed by Daniel \cite{Da67}.
Using $\theta^{(k)} = 0$ and $\beta^{(k)}$ as in \eqref{choice_1}, we obtain the  Fletcher-Reeves algorithm 
and $\beta^{(k)}$ as in \eqref{choice_2}  the 
Polak-Ribière-Polyak  algorithm, cf. \cite{ZZL06}. 
Finally, the Polak-Ribière-Polyak algorithm can be modified by setting
\begin{equation} \label{Modified PRP}
\theta ^{(k)} = \frac{\bigl\langle g^{(k+1)}, d^{(k)} \bigr \rangle}{\| g^{(k)}\|^2}.
\end{equation}
This modified Polak-Ribière-Polyak  algorithm will be our method of choice. 
It has the following properties whose brief proof we give for convenience, cf. \cite[Theorem 2.1]{ZZL06}.

\begin{proposition}\label{PropertiesMPRP}
For Algorithm \textup{\ref{alg:NonlinearConjugateGradientMethodUpdates}} 
with update \textup{\eqref{choice_2}} and \textup{\eqref{Modified PRP}} the 
following holds true:
\begin{itemize}
\item[\textup{i)}] 
If the step sizes $\alpha^{(k)}$ are found by exact line search, i.e., 
$\alpha^{(k+1)} = \mathrm{argmin}_{\alpha} F(x^{(k)} \\+ \alpha d^{(k)})$,
then $\theta^{(k)}=0$ for all $k\in\N$, so that we obtain the usual Polak-Ribière-Polyak Algorithm.
\item[\textup{ii)}] Independently from the line search, $d^{(k)}$ is a descent 
direction at $x^{(k)}$ for all $k\in\N$.
\end{itemize}
\end{proposition}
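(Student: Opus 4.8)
The plan is to prove both parts by direct computation, exploiting the precise form of the coefficients $\beta^{(k)}$ from \eqref{choice_2} and $\theta^{(k)}$ from \eqref{Modified PRP}.

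For part i), I would invoke the first-order optimality condition of exact line search. If $\alpha^{(k)} = \mathrm{argmin}_\alpha F(x^{(k)} + \alpha d^{(k)})$, then differentiating $\alpha \mapsto F(x^{(k)} + \alpha d^{(k)})$ and evaluating at $\alpha^{(k)}$ gives, by the chain rule, $\langle \nabla F(x^{(k+1)}), d^{(k)}\rangle = \langle g^{(k+1)}, d^{(k)}\rangle = 0$. Substituting this into \eqref{Modified PRP} yields $\theta^{(k)} = 0$, so the update for $d^{(k+1)}$ collapses to the standard Polak-Ribière-Polyak recursion.

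For part ii), the point is that the additional term $-\theta^{(k)}(g^{(k+1)} - g^{(k)})$ is designed precisely so that its inner product with $g^{(k+1)}$ cancels the contribution of $\beta^{(k)} d^{(k)}$. Concretely, from $d^{(k+1)} = -g^{(k+1)} + \beta^{(k)} d^{(k)} - \theta^{(k)}(g^{(k+1)} - g^{(k)})$ one computes
$$
\langle g^{(k+1)}, d^{(k+1)}\rangle = -\|g^{(k+1)}\|^2 + \beta^{(k)}\langle g^{(k+1)}, d^{(k)}\rangle - \theta^{(k)}\langle g^{(k+1)}, g^{(k+1)} - g^{(k)}\rangle.
$$
Inserting $\beta^{(k)} = \langle g^{(k+1)}, g^{(k+1)} - g^{(k)}\rangle / \|g^{(k)}\|^2$ and $\theta^{(k)} = \langle g^{(k+1)}, d^{(k)}\rangle / \|g^{(k)}\|^2$, both of the last two terms equal $\langle g^{(k+1)}, g^{(k+1)} - g^{(k)}\rangle \, \langle g^{(k+1)}, d^{(k)}\rangle / \|g^{(k)}\|^2$ and hence cancel, leaving $\langle g^{(k+1)}, d^{(k+1)}\rangle = -\|g^{(k+1)}\|^2$. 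Combined with $\langle g^{(0)}, d^{(0)}\rangle = -\|g^{(0)}\|^2$ from the initialization $d^{(0)} = -g^{(0)}$, an induction gives $\langle g^{(k)}, d^{(k)}\rangle = -\|g^{(k)}\|^2$ for all $k$, which is strictly negative whenever $g^{(k)} \neq 0$; hence $d^{(k)}$ is a descent direction at $x^{(k)}$.

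There is no genuine obstacle here: both claims hold by construction of $\beta^{(k)}$ and $\theta^{(k)}$. The only things requiring care are the bookkeeping of the two inner products in the cancellation and treating the base case $k=0$ separately. It is worth recording that the computation yields not merely descent but the exact identity $\langle g^{(k)}, d^{(k)}\rangle = -\|g^{(k)}\|^2$, which is precisely what makes this modified scheme convenient for the later convergence analysis.
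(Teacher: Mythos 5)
Your proof is correct and follows essentially the same route as the paper: part i) by the first-order optimality condition of exact line search, and part ii) by direct expansion of $\langle g^{(k+1)}, d^{(k+1)}\rangle$ with cancellation of the $\beta^{(k)}$- and $\theta^{(k)}$-terms, yielding the identity $\langle g^{(k)}, d^{(k)}\rangle = -\|g^{(k)}\|^2$. One minor remark: no induction is actually needed in part ii), since the computation at step $k$ does not use the descent property at step $k-1$; the identity holds for each $k$ by the formula for $d^{(k+1)}$ alone, with $k=0$ handled by the initialization.
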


\begin{proof}
i) 
Since the line search is exact we have for
the function $\alpha \mapsto F(x^{(k)}+\alpha d^{(k)})$ that
\begin{align*}
0 =  \bigl\langle \nabla F(x^{(k)}+\alpha^{(k)} d^{(k)}),d^{(k)} \bigr\rangle = \bigl\langle \nabla F(x^{(k+1)}),d^{(k)}\bigr\rangle 
= \bigl\langle g^{(k+1)},d^{(k)}\bigr\rangle.
\end{align*}
ii){ For $k=0$ it holds $d^{(0)}=-g^{(0)}$. For $k\geq 1$ we set $y^{(k)} 
\coloneqq g^{(k+1)}-g^{(k)}$ and obtain
\begin{align*}
\bigl\langle g^{(k+1)},d^{(k+1)} \bigr\rangle 
\vspace{1ex}
&= -\|{g^{(k+1)}}\|^2 + \beta^{(k)} \bigl\langle g^{(k+1)},d^{(k)}\bigr\rangle - \theta^{(k)}\bigl\langle g^{(k+1)},y^{(k)}\bigr\rangle  \\
&= -\|g^{(k+1)}\|^2 + \frac{\bigl\langle g^{(k+1)},y^{(k)}\bigr\rangle \bigl\langle g^{(k+1)},d^{(k)}\bigr\rangle}{\|{g^{(k)}}\|^2} 
- \frac{\bigl\langle g^{k+1},d^{(k)}\bigr\rangle \bigl\langle g^{(k+1)},y^{(k)}\bigr\rangle}{\|g^{(k)}\|^2} \\
\vspace{1ex}
&= - \|g^{(k+1)}\|^2 <0. 
\end{align*}}
\end{proof}

\paragraph{CG algorithms on manifolds.}
Let $\mathcal M$ be a complete, connected $d$-dimensional Riemannian manifold with tangent space $T_x \mathcal M$ in $x \in \mathcal M$ and
Riemannian metric $\| \cdot \|_x$. By  $T\mathcal{M}$ we denote the tangent bundle of $\mathcal M$.
The geometric version of the CG algorithm replaces the gradient/Hessian 
by the Riemannian gradient/Hessian $\nabla_{\mathcal M}$/$\nabla_{\mathcal M}^2$ on the manifold,
and incorporate a vector transport $\mathcal{T}$ 
between tangent spaces of the manifold in order to make the addition of the vectors $d^{(k)}$, which are in different tangent spaces, possible. 

Recall that a map $\mathcal{R}\colon T\mathcal{M} \rightarrow \mathcal{M}$ is 
called 
a \emph{retraction} (on $\mathcal{M}$), if 
\begin{itemize}
\item[i)] $\mathcal{R}_x(0_x)=x$, where $0_x$ denotes the zero vector in $T_x\mathcal{M}$, and
\item[ii)] $D\mathcal{R}_x(0_x) = \text{id}_{T_x\mathcal{M}}$ with the canonical identification $T_0(T_x\mathcal{M}) = T_x\mathcal{M}$.
\end{itemize}
In particular, on a complete manifold, the \emph{exponential map} $\exp_x\colon T_x\mathcal{M} \rightarrow \mathcal M$ is a retraction.
Fixing a retraction we can define a vector transport between tangent spaces.\\
A \emph{vector transport} $\mathcal{T}: T\mathcal{M}\times T\mathcal{M}\rightarrow T\mathcal{M}$ 
associated to a retraction $\mathcal{R}\colon T\mathcal{M}\rightarrow \mathcal{M}$ is a smooth mapping defined by
\begin{align*}
\left( (x,\theta),(x,\xi) \right) \mapsto (\mathcal{R}_x(\theta),\mathcal{T}_{x,\theta} \xi)
\end{align*}
with the following properties
\begin{itemize}
\item[i)](Consistency)  $\mathcal T_{x,0_x}\xi =\xi$ for all $\xi\in T_x\mathcal{M}$.
\item[ii)](Linearity)   $\mathcal T_{x,\theta}(a\xi+b\zeta) = a \mathcal T_{x,\theta}\xi
                          + b \mathcal T_{x,\theta}\zeta$ for all $\theta,\xi,\zeta\in T_x\mathcal{M}$, $a,b\in\R$.
\end{itemize}
In particular, the \emph{parallel transport} $\mathcal{P}_{x,\theta} (\xi)$ 
along a geodesic starting at $x$ with tangent vector $\theta$ is a vector transport.
A vector transport different from the parallel transport yields often computationally less expensive algorithms with similar convergence properties.

Now the modified Polak-Ribière-Polyak CG-algorithm on a manifold $\mathcal M$ 
is given by 
Algorithm \ref{alg:cg_1}. 

\begin{algorithm}    \caption{\textbf{Geometric Modified Polak-Ribière-Polyak Algorithm} (GMPRP)} \label{alg:cg_1}
\begin{algorithmic}
   \State \textbf{Input}: initial point $x^{(0)} \in \mathcal{M}$       
   \State \textbf{Initialization}:  $g^{(0)} = \nabla_{\mathcal M} F\bigl(x^{(0)}\bigr)$, 
		$d^{(0)}= - g^{(0)}$, $k=0$     
   \While{$\Vert g^{(k)} \Vert_{x^{(k)}}>0$ }  \vspace{0.5ex}
   \State Find a step size $\alpha^{(k)}$ \vspace{0.5ex}
   \State  $x^{(k+1)} = \mathcal{R}_{x^{(k)}}(\alpha^{(k)} d^{(k)})$
   \vspace{0.5ex}
   \State $g^{(k+1)} = \nabla_{\mathcal{M}} F(x^{(k+1)})$
   \vspace{0.5ex}
   \State $\hphantom{^{+1}} y^{(k)}  = g^{(k+1)} - \mathcal{T}_{x^{(k)}, \alpha^{(k)} d^{(k)} } g^{(k)}$
   \vspace{0.5ex}
   \State $\hphantom{^{+1}}\tilde{d}^{(k)} = \mathcal{T}_{x^{(k)}, \alpha^{(k)}d^{(k)}} d^{(k)}$
   \vspace{0.5ex}
   \State $\hphantom{^{+1}}\beta^{(k)} = \frac{\langle g^{(k+1)},y^{(k)} \rangle_{x^{(k+1)}} }{ \|g^{(k)}\|_{x^{(k)}}^2 } $
   \vspace{0.5ex}
   \State $\hphantom{^{+1}}\theta^{(k)} = \frac{\langle g^{(k+1)}, \tilde{d}^{(k)}  \rangle_{x^{(k+1)}}}{\| g^{(k)}\|_{x^{(k)}}^2}$
   \vspace{0.5ex}
   \State $d^{(k+1)} = -g^{(k+1)} + \beta^{(k)} \tilde{d}^{(k)} - \theta^{(k)} y^{(k)}$
   \vspace{0.5ex}
   \State $k = k+1$
\EndWhile
\end{algorithmic}
\end{algorithm}

The geometric GMPRP algorithm has analogous properties as in the Euclidean setting.
\newline 

\begin{proposition} \label{thm:AlwaysDescentDirection}
For Algorithm \textup{\ref{alg:cg_1}} the following holds true:
\begin{itemize}
\item[\textup{i)}] If the step sizes $\alpha^{(k)}$ are generated by an exact 
line search and parallel transport is chosen as vector transport, 
then it holds $\theta^{(k)}=0$ for all $k\in\N$. 
\item[\textup{ii)}] Independently from the line search and the vector 
transport, $d^{(k)}$ is a descent direction at $x^{(k)}$ for all $k\in\N$.
\end{itemize}
\end{proposition}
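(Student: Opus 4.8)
The plan is to follow the blueprint of the Euclidean statement in Proposition~\ref{PropertiesMPRP}, replacing ordinary inner products by the Riemannian metric $\langle\cdot,\cdot\rangle_x$, ordinary gradients by $\nabla_{\mathcal M}F$, and ordinary addition of search directions by the vector transport built into Algorithm~\ref{alg:cg_1}.

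For part~i) I would start from the exact line search condition. Writing $\gamma(\alpha)\coloneqq\exp_{x^{(k)}}(\alpha\,d^{(k)})$, exactness means $\alpha^{(k)}$ is a stationary point of $\alpha\mapsto F(\gamma(\alpha))$, so by the chain rule together with the defining relation $\langle\nabla_{\mathcal M}F(x),\xi\rangle_x = dF_x(\xi)$,
$$
0 = \frac{d}{d\alpha}F(\gamma(\alpha))\Big|_{\alpha=\alpha^{(k)}} = \bigl\langle \nabla_{\mathcal M}F(\gamma(\alpha^{(k)})),\dot\gamma(\alpha^{(k)})\bigr\rangle_{\gamma(\alpha^{(k)})}.
$$
Since $\gamma$ is a geodesic, its velocity field is parallel along $\gamma$; hence $\dot\gamma(\alpha^{(k)}) = \mathcal{P}_{x^{(k)},\alpha^{(k)}d^{(k)}}\,d^{(k)} = \tilde{d}^{(k)}$, using that parallel transport is the prescribed vector transport, and $\gamma(\alpha^{(k)}) = x^{(k+1)}$, $\nabla_{\mathcal M}F(x^{(k+1)}) = g^{(k+1)}$. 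Therefore $\langle g^{(k+1)},\tilde{d}^{(k)}\rangle_{x^{(k+1)}} = 0$, which is precisely the numerator of $\theta^{(k)}$, so $\theta^{(k)} = 0$; the denominator $\|g^{(k)}\|_{x^{(k)}}^2$ is nonzero inside the while-loop.

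For part~ii) I would argue by the same cancellation as in the Euclidean proof, now carried out tangent space by tangent space. For $k=0$ we have $d^{(0)} = -g^{(0)}$, hence $\langle g^{(0)},d^{(0)}\rangle_{x^{(0)}} = -\|g^{(0)}\|_{x^{(0)}}^2 < 0$. For $k\ge 0$, insert the update $d^{(k+1)} = -g^{(k+1)} + \beta^{(k)}\tilde{d}^{(k)} - \theta^{(k)}y^{(k)}$ into $\langle g^{(k+1)},d^{(k+1)}\rangle_{x^{(k+1)}}$ and expand by bilinearity of the metric:
$$
\langle g^{(k+1)},d^{(k+1)}\rangle_{x^{(k+1)}} = -\|g^{(k+1)}\|_{x^{(k+1)}}^2 + \beta^{(k)}\langle g^{(k+1)},\tilde{d}^{(k)}\rangle_{x^{(k+1)}} - \theta^{(k)}\langle g^{(k+1)},y^{(k)}\rangle_{x^{(k+1)}}.
$$
Substituting the definitions of $\beta^{(k)}$ and $\theta^{(k)}$ from Algorithm~\ref{alg:cg_1}, the last two terms each equal $\frac{1}{\|g^{(k)}\|_{x^{(k)}}^2}\,\langle g^{(k+1)},y^{(k)}\rangle_{x^{(k+1)}}\langle g^{(k+1)},\tilde{d}^{(k)}\rangle_{x^{(k+1)}}$ with opposite signs, so they cancel and one is left with $\langle g^{(k+1)},d^{(k+1)}\rangle_{x^{(k+1)}} = -\|g^{(k+1)}\|_{x^{(k+1)}}^2 < 0$, since $g^{(k+1)}\neq 0$ within the loop. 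Note that neither the choice of line search nor the choice of vector transport enters this computation.

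The only genuinely non-routine point is the identification used in part~i), namely that the directional derivative of $F$ along the geodesic $\gamma$ equals $\langle g^{(k+1)},\tilde{d}^{(k)}\rangle_{x^{(k+1)}}$. This relies on the retraction actually being the exponential map (which is implicit once parallel transport is fixed as the vector transport), on the characterization of the Riemannian gradient via $dF$, and on the standard fact that a geodesic's velocity is parallel along itself — together with linearity of parallel transport to absorb the scaling by $\alpha^{(k)}$. Everything else reduces to the same algebraic cancellation as in Proposition~\ref{PropertiesMPRP}.
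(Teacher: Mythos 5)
Your proof is correct and follows the same approach as the paper: part~i) uses the exact line search condition together with the fact that a geodesic's velocity is parallel along itself (the paper states the resulting identity directly and cites \cite{Sm94}, whereas you supply the derivation explicitly, including the needed rescaling by $\alpha^{(k)}$ via linearity of parallel transport), and part~ii) carries out the same telescoping cancellation of the $\beta^{(k)}$- and $\theta^{(k)}$-terms as in Proposition~\ref{PropertiesMPRP}, which is exactly what the paper invokes.
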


\begin{proof}
i)
In case of exact line search it holds
\begin{align*}
\bigl\langle g^{(k+1)}, \tilde{d}^{(k)} \bigr\rangle_{x^{(k+1)}} 
= \bigl\langle \nabla_\mathcal{M} F(x^{(k+1)}), \mathcal{T}_{x^{(k)},\alpha^{(k)} d^{(k)}} d^{(k)} \bigr\rangle_{x^{(k+1)}} = 0,
\end{align*}
cf. \cite[p.131]{Sm94}. 

ii) The assertions follow as in the proof of Theorem \ref{PropertiesMPRP} by straightforward computation.
\end{proof}

In the numerical part, we will determine $\alpha^{(k)}$ by the Line Search Algorithm \ref{alg:AlphasqrLineSearch}.
This search was proposed in \cite{ZJB2016}, but with another initial step size given by
\begin{align} \label{eqn:GaussNewtonLikeStepsize}
\tilde{\alpha}^{(0)} = \frac{\langle d, \nabla_{\mathcal{M}} F(X) \rangle_X }{\norm{Df(X)[d]}_F^2}, \ \ \ X\in\mathcal{M}, \ d\in T_X\mathcal{M}
\end{align}
where $f$ is the matrix function with $F(X) = \frac{1}{2}\|f(X)\|_F^2$ and 
$Df(X)[d]$ denotes the usual derivative in an euclidean space. 
In our numerical experiments we have observed that the initial Newton step size in Algorithm \ref{alg:AlphasqrLineSearch}
with the approximated Hessian leads to less updates during line search than 
with initialization \eqref{eqn:GaussNewtonLikeStepsize} 
and that this even compensates the fact that the Newton step size is more 
expensive to compute.
We have approximated the Hessian by
\begin{align}
\label{eqn:ApprHessian}
\nabla_\mathcal{M}^2 F(x)d 
\ \approx \ \norm{d} \frac{ \nabla_\mathcal{M} F\bigl( \gamma_{x,d}\bigl(\frac{h}{\norm{d}}\bigr) \bigr) - \nabla_\mathcal{M} F(x) }{h}
\end{align}
in our numerical examples.  
Note that, compared to the line search in \cite{ZJB2016}, we have inserted an additional step which is required for \eqref{eqn:1} in the convergence proof. In the proof of \cite[Theorem 3.4]{ZJB2016} a constant initial stepsize $\tilde{\alpha}^{(0)}$ or a similar step 
to our additional one is needed for the same reason. In Section 
\ref{sec:numerics} we compare the behaviour of both line search algorithms. 
Alternatively, we could use the classical Armijo Line Search with the same initial $\tilde{\alpha}^{(0)}$. 
Indeed, we have also implemented this step size search algorithm, but it does 
not appear cheaper than the first one.
\newline

\begin{algorithm}[ht]
	\caption{ (\textbf{Line Search Algorithm}) } 	\label{alg:AlphasqrLineSearch}
	\begin{algorithmic}
		\State \textbf{Parameters:}  $0 < \tau < 1$, $0 < \delta < 1$, 
		alternative step size $\alpha^\star$
		\State \textbf{Input:} smooth function $F\colon{\mathcal M} \to [0,\infty)$, start point $x \in {\mathcal M}$, 
		descent direction $d \in \mathrm T_{x} {\mathcal M}$
		\State \textbf{Output:} step size $\alpha^{(k)} = \tilde{\alpha}^{(l)}$
		\State \textbf{Initialization:}
		$l = 0$,
		\[\tilde{\alpha}^{(0)} =
		\begin{cases}
		\left| \frac{ \bigl\langle d, \nabla_{{\mathcal M}} F(x) \bigr\rangle_x}{ \bigl\langle d, \nabla^2_{\mathcal M} F(x) d\bigr\rangle_x } \right|,
		& \bigl\langle d, \nabla^2_{{\mathcal M}}F(x) d \bigr\rangle_x \ne 0,\\
		\alpha^\star, & \text{ else}
		\end{cases}
		\]
		\While{$F \left( \mathcal{R}_x ({\tilde{\alpha}}^{(l)} d ) \right) - 
		F(x) \ge - \delta {{}\tilde{\alpha}^{(l)}}^{2}\norm{d}_x^2$}
		\State $\tilde{\alpha}^{(l+1)} = \tau \tilde{\alpha}^{(l)}$
		\State $l = l+1$
		\EndWhile
		\State \textbf{Additional Step:}
		\If{$l=0$}
		\While{$F \left( \mathcal{R}_x (\tilde{\alpha}^{(l)} d ) \right) - F(x) 
		< - \delta {{}\tilde{\alpha}^{(l)}}^{2} \norm{d}_x^2$}
		\State $\tilde{\alpha}^{(l+1)} = \tau^{-1} \tilde{\alpha}^{(l)}$
		\State $l= l+1$
		\EndWhile
		\State $l = l-1$
		\EndIf
	\end{algorithmic}	
\end{algorithm}

\section{Special Manifolds}\label{sec:specmani}
In this section, we provide the quantities required in the CG algorithms for the special manifolds appearing in
$$
\mathcal{M}_t \coloneqq \mathcal{OB}(n) \times \Orth(n) \times \mathcal V_t \times \mathbb R^t_{>0} \times \mathbb R^t.
$$
The spaces $\mathcal{V}_t$ and $\mathbb R^t$ are linear spaces,
$\OB$ and $\Orth(n)$ are smooth compact matrix manifolds 
of dimension $n(n-1)$ with tangent spaces 
\begin{align*}
T_S{\OB} &= \{ \Xi \in \mathbb{R}^{n,n}: \diag(S\, \Xi^T)=0 \},\\
T_Q{\Orth(n)} &= Q \, \text{Skew}(n), 
\end{align*}
where $\text{Skew}(n)$ denotes the linear space of $n \times n$ skew-symmetric matrices. 
Note that $\Orth(n)$ is actually not a connected manifold, but can be split into the two connected components $\mathrm{{SO}(n)}$ 
and $-\mathrm{{SO}(n)}$ and since the functionals \eqref{problem:old} and \eqref{functional:new} are invariant under sign changes for $Q$, this fact is not of relevance for our algorithms.
The manifolds $\OB$ and $\Orth(n)$ are isometrically  embedded into $\mathbb R^{n,n}$ and the tangent spaces are equipped with the inner product
\begin{align*}
\langle \Xi_1,\Xi_2 \rangle_S &= \tr (\Xi_1^\tT \Xi_2), \quad \Xi_1,\Xi_2\in T_S\OB, \\
\langle \Xi_1,\Xi_2 \rangle_Q &= \tr (\Xi_1^\tT \Xi_2), \quad \Xi_1,\Xi_2 \in T_Q \Orth(n).
\end{align*}
In particular, these inner products are independent of $S$, resp. $Q$.
The tangent space of the positive numbers at $a \in  \mathbb R^t_{>0}$ is $T_{a} \mathbb R^t_{>0} = \mathbb R^t$  with inner product
\begin{equation} \label{mult}
\bigl\langle \xi^{(1)},\xi^{(2)} \bigr\rangle_{a} = \frac{\xi^{(1)}\xi^{(2)}}{a^2}, \qquad \xi^{(1)},\xi^{(2)} \in \mathbb R^t,
\end{equation}
where the product (and quotients) of vectors is meant componentwise.
Finally, we have for 
$x=(S,Q,V,a,b) \in \mathcal M_t$ 
that 
$T_x{\mathcal M_t} = T_S{\OB} \times T_Q{\Orth(n)} \times \mathcal V_t \times \mathbb R^t \times \mathbb R^t$
with the Riemannian metric \eqref{mult} on the fourth space and the Euclidean 
Riemannian metric on the other spaces (independent of the concrete $S,Q,V,b$).
The Riemannian gradient of $F$ requires
the orthogonal projections of $\Xi \in \mathbb R^{n,n}$ onto the tangent spaces which
are given by
\begin{align*}
\Pi_{T_S\OB}(\Xi) &= \Xi - \diag(S \, \Xi^\tT)S,  \\
\Pi_{T_Q\Orth(n)}(\Xi) &= Q \, \frac12 (Q^\tT \Xi - \Xi^\tT Q).  \\ 
\end{align*}
Then we have the following lemma.

\begin{lemma}\label{gradients}
The Riemannian gradient of $F = F(S,Q,V,a,b)$ in \eqref{model:new} reads as
$\nabla_{{\mathcal M}_t} F = (\nabla_S F,\nabla_Q F,\nabla_V F, \nabla_{a} F,\nabla_{b} F )$, where
    \begin{align}
   \nabla_S F     &= \Pi_{T_S \OB} \left( 2 S \circ H \right),  \\
    \nabla_Q F     &= \Pi_{T_Q \O (n)} \left( -(H^\tT G + H  G^\tT) \, Q \right), \\
    \nabla_V F     &= \Pi_{T_V \mathcal{V}_t } \left(-T_{ab}^\tT \, Q^\tT H  Q 
    T_{ab}^{-\tT} \right),\\
    \nabla_{a_k} F &= \Bigl\langle Q^\tT \, (G^\tT  H - H G^\tT) \, Q \, T_{ab}^{-\tT} , \mathrm{blockdiag} \bigl( 0_{s+ 2(k-1)},M_{a_k} ,0_{2(t-k-1)} \bigr) \Bigr\rangle,\\
    \nabla_{b_k} F &= \Bigl\langle Q^\tT \, (G^\tT  H - H G^\tT) \, Q \, T_{ab}^{-\tT} , \mathrm{blockdiag} \bigl( 0_{s+ 2(k-1)},N,       0_{2(t-k-1)} \bigr) \Bigr\rangle,
       \end{align}
$k=1,\ldots,t$, where $G \coloneqq Q \, T_{ a  b}\,  \left( D(\Lambda)+V \right) \, T^{-1}_{ a  b} \, Q^\tT$, $H  \coloneqq S \circ S - G$, and
$$
N \coloneqq \begin{pmatrix} 0&1\\0&0\end{pmatrix}, \quad 
M_\alpha \coloneqq \begin{pmatrix} \alpha^2&0\\0&-1\end{pmatrix}.
$$
\end{lemma}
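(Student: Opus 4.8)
The plan is to compute the Euclidean gradient of $F$ with respect to each of the five block variables and then project onto the respective tangent space, using that the Riemannian gradient of a function on an embedded (or product) Riemannian manifold is the tangential projection of the Euclidean gradient, and that for the factor $\mathbb R^t_{>0}$ with metric \eqref{mult} the Riemannian gradient is obtained from the Euclidean one by multiplication with $a^2$ componentwise. Writing $G = G(Q,V,a,b) = Q\,T_{ab}\,(D(\Lambda)+V)\,T_{ab}^{-1}\,Q^\tT$ and $H = S\circ S - G$, we have $F = \tfrac12\|H\|_F^2 = \tfrac12\langle H,H\rangle$, so by the chain rule the directional derivative in any variable is $\langle H, \mathrm{D}H[\cdot]\rangle = -\langle H, \mathrm{D}G[\cdot]\rangle$ plus, in the $S$-direction, $\langle H, \mathrm{D}(S\circ S)[\cdot]\rangle$.

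First I would treat the $S$-variable: since $\mathrm{D}(S\circ S)[\Xi] = 2S\circ\Xi$ and the componentwise product is self-adjoint, $\mathrm{D}_S F[\Xi] = \langle H, 2S\circ\Xi\rangle = \langle 2S\circ H,\Xi\rangle$, so the Euclidean gradient is $2S\circ H$ and projecting with $\Pi_{T_S\mathcal{OB}}$ gives the stated $\nabla_S F$. Next, for $Q$: perturbing $Q\mapsto Q + \Xi$ with $\Xi\in\mathbb R^{n,n}$ and writing $C \coloneqq T_{ab}(D(\Lambda)+V)T_{ab}^{-1}$ so that $G = QCQ^\tT$, we get $\mathrm{D}_Q G[\Xi] = \Xi C Q^\tT + Q C \Xi^\tT$; then $\mathrm{D}_Q F[\Xi] = -\langle H, \Xi C Q^\tT + Q C\Xi^\tT\rangle = -\langle H C^\tT Q^\tT \cdot(\,\cdot\,), \ldots\rangle$ — more precisely, using $\langle H,\Xi C Q^\tT\rangle = \langle H Q C^\tT,\Xi\rangle$ and $\langle H, QC\Xi^\tT\rangle = \langle H^\tT Q C,\Xi\rangle$, and recalling $G^\tT = QC^\tT Q^\tT$, one rewrites $HQC^\tT = HQ C^\tT Q^\tT Q = H G^\tT Q$ and similarly $H^\tT Q C = H^\tT G Q$, giving Euclidean gradient $-(HG^\tT + H^\tT G)Q$, which after $\Pi_{T_Q\O(n)}$ yields $\nabla_Q F$. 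For $V$: since $V\mapsto G$ is affine with $\mathrm{D}_V G[W] = Q T_{ab} W T_{ab}^{-1} Q^\tT$, we get $\mathrm{D}_V F[W] = -\langle H, QT_{ab}WT_{ab}^{-1}Q^\tT\rangle = -\langle T_{ab}^\tT Q^\tT H Q T_{ab}^{-\tT}, W\rangle$, whence the Euclidean gradient is $-T_{ab}^\tT Q^\tT H Q T_{ab}^{-\tT}$, projected onto $\mathcal V_t$.

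The slightly more delicate part is the $(a,b)$-derivatives, so I would do these last. Here only $T_{ab}$ depends on $a_k,b_k$; with $R \coloneqq D(\Lambda)+V$ and $C = T_{ab}RT_{ab}^{-1}$ one has $\mathrm{D}_{a_k}C = (\mathrm{D}_{a_k}T_{ab})RT_{ab}^{-1} + T_{ab}R\,\mathrm{D}_{a_k}(T_{ab}^{-1})$, and using $\mathrm{D}(T_{ab}^{-1}) = -T_{ab}^{-1}(\mathrm{D}T_{ab})T_{ab}^{-1}$ this becomes $T_{ab}\big[\,T_{ab}^{-1}(\mathrm{D}_{a_k}T_{ab})R - R\,T_{ab}^{-1}(\mathrm{D}_{a_k}T_{ab})\,\big]T_{ab}^{-1}$, i.e. $T_{ab}[E_k,R]T_{ab}^{-1}$ with $E_k \coloneqq T_{ab}^{-1}\mathrm{D}_{a_k}T_{ab}$ a block-diagonal matrix supported in the $k$-th $2\times2$ block. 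A direct $2\times2$ computation gives $E_k = \tfrac{1}{a_k}\,\mathrm{blockdiag}(0,\ldots,M_{a_k},\ldots,0)$ (and for $b_k$ the analogous $E_k' = \mathrm{blockdiag}(0,\ldots,N,\ldots,0)$ with the normalizations chosen so the stated $M_\alpha$, $N$ appear). Then $\mathrm{D}_{a_k}G = Q\,T_{ab}[E_k,R]T_{ab}^{-1}Q^\tT$ and $\mathrm{D}_{a_k}F = -\langle H, Q T_{ab}[E_k,R]T_{ab}^{-1}Q^\tT\rangle$; moving $Q$, $T_{ab}$ to the other side and using the cyclicity/antisymmetry of $\langle H,[X,\cdot]\rangle$-type identities (specifically $\langle H, Q T_{ab}(E_k R - R E_k)T_{ab}^{-1}Q^\tT\rangle = \langle Q^\tT(G^\tT H - H G^\tT)Q\,T_{ab}^{-\tT}, E_k\rangle$, after inserting $T_{ab}R T_{ab}^{-1} = Q^\tT G Q$ appropriately) produces the displayed pairing against the block matrix; finally multiplying by the metric factor (the $1/a_k$ in $E_k$ against the $a_k^2$ from \eqref{mult} leaving the Riemannian gradient with the clean $M_{a_k}$) gives $\nabla_{a_k}F$, and likewise $\nabla_{b_k}F$. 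The main obstacle is precisely this bookkeeping: correctly tracking the transpose-inverses $T_{ab}^{-\tT}$, verifying the commutator-trace identity that turns $-\langle H, Q T_{ab}[E,R]T_{ab}^{-1}Q^\tT\rangle$ into $\langle Q^\tT(G^\tT H - HG^\tT)Q\,T_{ab}^{-\tT}, E\rangle$, and getting the constants in $M_\alpha$ and $N$ to come out exactly after accounting for the non-Euclidean metric on the $a$-factor — everything else is a routine application of $\nabla_{\mathcal M}F = \Pi_{T}\,\nabla_{\mathrm{eucl}}F$.
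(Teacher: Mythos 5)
Your overall strategy (Euclidean gradient, then projection / metric correction) is the same as the paper's, and the computations for $\nabla_SF$, $\nabla_QF$, $\nabla_VF$ are correct. The paper's proof of the last two gradients is very terse: it simply records that $f_2(\alpha)=T_{\alpha,\beta}$ has $Df_2(\alpha)=\tfrac{1}{\alpha^2}M_\alpha$ and $f_1(\beta)=T_{\alpha,\beta}$ has $Df_1(\beta)=N$, then invokes the chain rule and the metric \eqref{mult}. You instead route the computation through $E_k\coloneqq T_{ab}^{-1}D_{a_k}T_{ab}$, and there you make two concrete computational errors that do not compensate.

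First, $E_k$ is \emph{not} $\tfrac{1}{a_k}\,\mathrm{blockdiag}(0,\dots,M_{a_k},\dots,0)$. In the $k$-th $2\times 2$ block,
\begin{align}
T_{a_k b_k}^{-1} D_{a_k}T_{a_k b_k}
= \begin{pmatrix} \tfrac{1}{a_k} & -b_k \\ 0 & a_k\end{pmatrix}
  \begin{pmatrix} 1 & 0 \\ 0 & -\tfrac{1}{a_k^2}\end{pmatrix}
= \begin{pmatrix} \tfrac{1}{a_k} & \tfrac{b_k}{a_k^2} \\ 0 & -\tfrac{1}{a_k}\end{pmatrix},
\end{align}
which has a nonzero off-diagonal $b_k/a_k^2$ entry and whose $(1,1)$ entry is $1/a_k$, not $a_k$; so it is not a multiple of $M_{a_k}$ in general. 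The clean object is rather $D_{a_k}T_{ab}$ itself, which does equal $\tfrac{1}{a_k^2}\,\mathrm{blockdiag}(0,\dots,M_{a_k},\dots,0)$ (that is the paper's $Df_2(a_k)=\tfrac{1}{a_k^2}M_{a_k}$). Second, your commutator-trace identity drops a factor of $T_{ab}^\tT$: one actually gets
\begin{align}
-\Bigl\langle H,\, Q\,T_{ab}[E_k,R]\,T_{ab}^{-1}Q^\tT\Bigr\rangle
= \Bigl\langle T_{ab}^\tT Q^\tT (G^\tT H - H G^\tT) Q\, T_{ab}^{-\tT},\, E_k\Bigr\rangle
= \Bigl\langle Q^\tT (G^\tT H - H G^\tT) Q\, T_{ab}^{-\tT},\, T_{ab}E_k\Bigr\rangle ,
\end{align}
and $T_{ab}E_k = D_{a_k}T_{ab} = \tfrac{1}{a_k^2}M_k$. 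With your claimed values the metric factor would give $a_k^2\cdot\tfrac{1}{a_k} = a_k$, not $1$, so the stated result would not be recovered. Once both corrections are made, the factor $a_k^2$ from \eqref{mult} cancels the $\tfrac{1}{a_k^2}$ in $D_{a_k}T_{ab}$ exactly, which is the cancellation the paper records. The $b_k$-case has the same issue: $T_{ab}^{-1}D_{b_k}T_{ab}$ has $k$-th block $\tfrac{1}{a_k}N$, and only the extra $T_{ab}^\tT$ in the pairing removes that $\tfrac{1}{a_k}$ (there being no metric factor to absorb it, since the $b$-factor carries the Euclidean metric). So the plan is right and aligns with the paper, but the $2\times2$ computation of $E_k$, the trace identity, and the final constant all need repair.
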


\begin{proof}
The computation of the first three Riemannian gradients 
is based on the fact that the Riemannian gradients are just orthogonal projections of the Euclidean gradients onto the
respective tangent spaces and the Euclidean matrix gradients follow by straightforward computations.

The fourth and fifth Riemannian gradients can be directly obtained from the chain rule for computing matrix gradients,
the fact that the mappings 
$f_1: \mathbb R \to \SL(2)$ and $f_2: \mathbb R_{>0} \to \SL(2)$
defined by
$$
f_1(\beta) = \begin{pmatrix} \alpha&\beta\\0&\frac{1}{\alpha} \end{pmatrix} \quad \mathrm{and} \quad f_2(\alpha) 
=  \begin{pmatrix} \alpha&\beta\\0&\frac{1}{\alpha} \end{pmatrix}
$$
have the differentials
$$
Df_1(\beta)   = N \quad \mathrm{and} \quad Df_2(\alpha) = \frac{1}{\alpha^2}M_\alpha,$$
respectively, 
and by regarding \eqref{mult} in the computation of the fourth gradient.
\end{proof}

For implementation, note that we can rewrite $\nabla_{a_k} F$ and $\nabla_{b_k} F$ as 
\begin{align*}
\nabla_{a_k} F &= a_k^2 A_{s+2k-1,s+2k-1} - A_{s+2k,s+2k}, \\
\nabla_{b_k} F &= A_{s+2k-1,s+2k}
\end{align*}
with $A \coloneqq Q^\tT \, (G^\tT  H - H G^\tT) \, Q \, T_{ab}^{-\tT}$. 
\newline 

Next we give retractions and corresponding transport maps for the involved manifolds.
Since $\mathcal{V}_t$ and $\mathbb R^t$ are linear spaces, we  have 
$$
\mathcal{R}_V(\Xi) = V+\Xi, \quad \mathcal{R}_b{\xi} = b + \xi.
$$
Retractions on $\OB$ follow from the retractions on the $(n-1)$-sphere
\begin{align*}
   \mathcal{R}_s(\xi) &= \frac{s+\xi}{\norm{s+\xi}}, \qquad \qquad \qquad \xi 
   \in T_{s} \mathbb S^{n-1}\\
    \exp_s \xi &= \cos(\| \xi \|)s +   \frac{\sin(\|\xi\|)}{\|\xi\|}\xi
\end{align*}
and are given by
\begin{align*}
\mathcal{R}_S(\Xi) &= 
\Bigl(\text{diag}\bigl((S+\Xi)(S+\Xi)^T\bigr)^{-\frac{1}{2}}\Bigr)(S+\Xi), \qquad \Xi \in T_S\OB,\\
\exp_S (\Xi) &= 
  \cos \Bigl( \text{diag} \bigl(\Xi \Xi^\tT\bigr)^{\frac{1}{2}} \Bigr) S 
+ \sin \Bigl( \text{diag} \bigl(\Xi \Xi^\tT\bigr)^{\frac{1}{2}} \Bigr) \, \text{diag} (\Xi \Xi^\tT)^{-\frac{1}{2}} \, \Xi,
\end{align*}
which are both smooth mappings.
For retractions on $\Orth(n)$, recall that the sum of the identity matrix and any skew-symmetric matrix is invertible and that for an invertible matrix $A\in \mathbb{R}^{n,n}$, 
there is a unique matrix $Q\in \Orth(n)$ and an upper diagonal matrix with positive diagonal entries $R\in \mathbb{R}^{n,n}$ 
such that $A=QR$. We denote the corresponding matrix $Q$ as $\mathrm{qf}(A)$.  
Well-known retractions on $\Orth(n)$ are given by
\begin{align}      
   \mathcal{R}_Q( \Xi)    &=     Q \, \mathrm{qf}(I+Q^\tT\, \Xi) = \mathrm{qf}(Q+\Xi), \qquad \Xi \in T_Q \Orth(n),\label{eqn:RetractionsReal2}\\
 \exp_{Q}( \Xi) &= Q \, \Exp(Q^\tT\, \Xi),
    \end{align}
see \cite[Example 4.1.2]{AMS09}, which are both smooth.
Another retraction which we do not apply here is given via the Cayley transform, see also \cite[Example 4.1.2]{AMS09}. 
\newline
Finally, we use as retraction on $\mathbb R_{>0}$ the exponential map
$$
\mathcal{R}_{a_k}(\xi) = \exp_{a_k} \xi = {a_k} \mathrm{e}^\frac{\xi}{{a_k}}, \qquad \xi \in \mathbb R,
$$
cf. \cite[Theorem 2.14]{ALRL11}. By straightforward computation one verifies that the geodesic distance on $\R_{>0}^t$ with inner product \eqref{mult} is given by
\begin{align}
\label{eqn:GeoDistPos}
\text{dist}_{\R_{>0}^t}(a,\tilde{a}) = \|\log(a)-\log(\tilde{a})\|_2,
\end{align}
where the logarithm is meant componentwise.

For the vector transport we use as in \cite{ZJB2016},
    \begin{align}
    \mathcal{T}_{S,\Theta} \Xi &= \Pi_{T_{\mathcal{R}_S(\Theta)}\OB} (\Xi),  \quad \Theta, \Xi \in T_S\OB,  \label{eqn:ProjTransport} \\  
    \mathcal{T}_{Q,\Theta} \Xi &= \Pi_{T_{\mathcal{R}_Q(\Theta)}\Orth(n)} (\Xi), \quad \; \Theta, \Xi \in T_Q\Orth(n),\\
		\mathcal{T}_{a_k,\theta} \xi &= \xi, \qquad \qquad \qquad \quad \theta, \xi \in T_{a_k} \mathbb R_{>0}.
    \end{align}
		
\begin{remark} \label{rem:parallel}		
Alternatively we could apply the following parallel transport maps:   
The parallel transport of $\Xi \in T_S\OB$ along geodesics in direction $\Theta \in T_S\OB$ at $S\in\OB$ is determined row-wise by the
parallel transport on the \textup{(}$n-1$\textup{)}-sphere, see 
\textup{\cite{HA17}}:
\begin{align*}
\mathcal{P}_{s,\theta}(\xi) 
&= \left(I_n + \bigl( \mathrm{cos} \bigl(\norm{\theta}\bigr) - 1 \bigr) \frac{\theta \theta^\tT}{\norm{\theta}^2} -
\mathrm{sin}\bigl(\norm{\theta}\bigr)\, \frac{s\theta^\tT}{\|\theta\|} \right)\xi, \qquad \xi \in T_s \mathbb S^{n-1}. 
\end{align*}
Further, we have
\begin{align}
\mathcal{P}_{Q,\Theta}(\Xi) &= Q \, \mathrm{Exp}\left(\frac{Q^\tT\Theta}{2}\right)Q^\tT \, \Xi \,  \mathrm{Exp}\left(\frac{Q^\tT\Theta}{2}\right), \qquad \Theta, \Xi \in T_Q \Orth(n),  \\
\mathcal{P}_{{a_k},\theta}(\xi) &=  e^\frac{\theta}{{a_k}}\xi = \frac{\mathcal{R}_{a_k}(\theta) }{{a_k}} \xi, 
\qquad \qquad \qquad \qquad \qquad \theta, \xi \in T_{a_k} \mathbb R_{>0}. \label{eqn:ParTransport}
\end{align}
In our numerical examples we have not seen advantages of the parallel transport over the vector transports in \eqref{eqn:ProjTransport}. 
\end{remark}

For our special manifold $\mathcal M = \mathcal M_t$ we have the following 
convergence result of Algorithm~\ref{alg:cg_1}.

\begin{thm} \label{thm:SameConvergenceTheorem}
Let $(x^{(k)})_k$ with 
$x^{(k)} = (S^{(k)},Q^{(k)},V^{(k)},a^{(k)},b^{(k)}) \in \mathcal M_t$ 
be the sequence generated by Algorithm~\textup{\ref{alg:cg_1}} and the Line 
Search Algorithm~\textup{\ref{alg:AlphasqrLineSearch}}. 
If the algorithm does not stop with $\Vert \nabla_{\mathcal{M}_t} F(x^{(k_0)})\Vert_{x^{(k_0)}} = 0$ in some step $k_0$, then it holds
\begin{align*}
\liminf_{k\to\infty} \bigr \Vert \nabla_{\mathcal{M}_t} F\bigl(x^{(k)}\bigr)\bigr \Vert_{x^{(k)}} = 0.
\end{align*}
\end{thm}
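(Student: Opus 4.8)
The plan is to follow the classical global-convergence scheme for conjugate gradient methods with an inexact (Armijo-type) line search, adapting it to the Riemannian manifold $\mathcal{M}_t$ and to the particular modified Polak-Ribière-Polyak update used in Algorithm~\ref{alg:cg_1}. First I would argue by contradiction: assume there is $\varepsilon > 0$ with $\Vert g^{(k)}\Vert_{x^{(k)}} \ge \varepsilon$ for all $k$, where $g^{(k)} = \nabla_{\mathcal{M}_t} F(x^{(k)})$. The line search guarantees the sufficient-decrease inequality $F(x^{(k+1)}) - F(x^{(k)}) \le -\delta (\alpha^{(k)})^2 \Vert d^{(k)}\Vert_{x^{(k)}}^2$, and since $F \ge 0$ is bounded below, summing gives $\sum_k (\alpha^{(k)})^2 \Vert d^{(k)}\Vert_{x^{(k)}}^2 < \infty$; in particular $\alpha^{(k)} \Vert d^{(k)}\Vert_{x^{(k)}} \to 0$. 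By Proposition~\ref{thm:AlwaysDescentDirection}ii), each $d^{(k)}$ satisfies $\langle g^{(k)}, d^{(k)}\rangle_{x^{(k)}} = -\Vert g^{(k)}\Vert_{x^{(k)}}^2$, so the search directions never degenerate in the relevant sense.

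The core of the argument is a Zoutendijk-type estimate together with a bound showing $\Vert d^{(k)}\Vert_{x^{(k)}}$ cannot grow too fast relative to $\Vert g^{(k)}\Vert_{x^{(k)}}$. I would proceed by establishing that the additional step in Line Search Algorithm~\ref{alg:AlphasqrLineSearch} yields a lower bound on $\alpha^{(k)}$ of the form $\alpha^{(k)} \ge c\, \vert\langle g^{(k)}, d^{(k)}\rangle_{x^{(k)}}\vert / \Vert d^{(k)}\Vert_{x^{(k)}}^2$ — this is exactly the inequality labelled \eqref{eqn:1} in the excerpt's discussion, and it relies on a Lipschitz-type bound for $F$ along retraction curves, which holds because the iterates stay in a compact set (the level set $\mathrm{lev}_{F(x^{(0)})} F$ is compact by Proposition~\ref{lem:ExistenceMinimizers}, and in particular the components of $a^{(k)}$ stay bounded away from $0$, so the metric \eqref{mult} on the $\mathbb{R}_{>0}^t$ factor is uniformly comparable to the Euclidean one). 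Combining this lower bound on $\alpha^{(k)}$ with $\alpha^{(k)} \Vert d^{(k)}\Vert_{x^{(k)}} \to 0$ gives $\Vert g^{(k)}\Vert_{x^{(k)}}^4 / \Vert d^{(k)}\Vert_{x^{(k)}}^2 \to 0$, i.e. $\Vert d^{(k)}\Vert_{x^{(k)}}^2 / \Vert g^{(k)}\Vert_{x^{(k)}}^4 \to \infty$, so under the contradiction hypothesis $\Vert d^{(k)}\Vert_{x^{(k)}} \to \infty$.

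To close the contradiction I would show this growth is impossible by controlling the recursion $d^{(k+1)} = -g^{(k+1)} + \beta^{(k)}\tilde d^{(k)} - \theta^{(k)} y^{(k)}$. Using that the vector transports in \eqref{eqn:ProjTransport} are norm-nonincreasing orthogonal projections (so $\Vert \tilde d^{(k)}\Vert_{x^{(k+1)}} \le \Vert d^{(k)}\Vert_{x^{(k)}}$), that $g^{(k)}$ is bounded on the compact level set, that $\Vert g^{(k)}\Vert \ge \varepsilon$, and that $y^{(k)} = g^{(k+1)} - \mathcal{T}g^{(k)}$ together with the step sizes $\alpha^{(k)}\Vert d^{(k)}\Vert \to 0$ forces $\Vert y^{(k)}\Vert \to 0$ (since the retraction moves the point by a distance $\to 0$ and $\nabla_{\mathcal{M}_t}F$ is uniformly continuous on the compact level set), one bounds $\vert\beta^{(k)}\vert$ and $\vert\theta^{(k)}\vert$ and derives a recursive inequality of the form $\Vert d^{(k+1)}\Vert \le C_1 + \eta_k \Vert d^{(k)}\Vert$ with $\eta_k \to 0$; iterating shows $\Vert d^{(k)}\Vert$ stays bounded, contradicting $\Vert d^{(k)}\Vert \to \infty$. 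Therefore the assumption fails and $\liminf_k \Vert \nabla_{\mathcal{M}_t}F(x^{(k)})\Vert_{x^{(k)}} = 0$.

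The main obstacle, and the part that genuinely requires care beyond copying \cite{ZJB2016}, is the handling of the $\mathbb{R}_{>0}^t$ factor with its non-Euclidean, point-dependent metric \eqref{mult}: one must verify that the Lipschitz/curvature estimates underlying the Zoutendijk step are uniform across tangent spaces, which works only because Proposition~\ref{lem:ExistenceMinimizers} confines the $a$-components to a compact subinterval of $(0,\infty)$, and one must check that the retraction $\mathcal{R}_{a_k}(\xi) = a_k e^{\xi/a_k}$ and the chosen vector transport ($\mathcal{T}_{a_k,\theta}\xi = \xi$) interact correctly with these bounds. Since many of the remaining estimates parallel those in \cite{ZJB2016}, the detailed computation is deferred to the appendix.
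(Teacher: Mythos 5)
Your proposal follows essentially the same route as the paper's appendix proof: the Zoutendijk-type summation giving $\alpha^{(k)}\Vert d^{(k)}\Vert_{x^{(k)}}\to 0$, a bound on $\Vert d^{(k)}\Vert_{x^{(k)}}$ via the recursion (controlling $\beta^{(k)},\theta^{(k)},y^{(k)}$), and the pullback-Lipschitz estimate from the additional line-search step yielding $\Vert g^{(k)}\Vert_{x^{(k)}}^2 \le C\,\alpha^{(k)}\Vert d^{(k)}\Vert_{x^{(k)}}^2$, all under the contradiction hypothesis. The only cosmetic differences are that you phrase the final contradiction as ``$\Vert d^{(k)}\Vert \to\infty$ versus $\Vert d^{(k)}\Vert$ bounded'' rather than the paper's direct ``$\Vert g^{(k)}\Vert \to 0$,'' and that you invoke uniform continuity of the gradient to get $\Vert y^{(k)}\Vert\to 0$ where the paper proves the sharper quantitative bound $\Vert y^{(k)}\Vert_{x^{(k+1)}}\le C\alpha^{(k)}\Vert d^{(k)}\Vert_{x^{(k)}}$ from Lipschitz continuity on the compact level set; both suffice for the recursion estimate.
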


The proof was given for the manifold $\widetilde{\mathcal M}_t$ and the 
function $\tilde F$ in \cite{ZJB2016}.
The assertion can be shown for our more general setting in a similar way based on the fact that
the line search algorithm ensures that the sequence $\{F(x^{(k)})\}_k$ is 
monotone decreasing and
that by Proposition \ref{lem:ExistenceMinimizers} the level sets $\mathrm{lev}_{F(x^{(0)})} F$ are bounded
so that all iterates are in this compact set. For convenience we give the proof in the appendix.

\section{Numerical Results} \label{sec:numerics}
%
In this section, we present numerical results using 
Algorithm \ref{alg:cg_1} with vector transport \eqref{eqn:ProjTransport} 
and line search given by Algorithm \ref{alg:AlphasqrLineSearch} for minimizing
\begin{itemize}
\item[(I)] model \eqref{problem:old} from \cite{ZJB2016},
\item[(II)] our new model \eqref{model:new}.
\end{itemize}
The algorithms were implemented in MATLAB R2019b and executed on a computer 
with Intel Core i5-7300U processor and 2 cores at 2,6GHz/ 2712 MHz and 8 
GB RAM. \newline
In the line search of Algorithm \ref{alg:AlphasqrLineSearch}, we set $\delta = 
10^{-4}$ and $\tau = 0.5$ as it was also used in \cite{ZJB2016}. We set our 
initial stepsize to $\alpha^\star = 1.4$ in Method I if $\|d\|<10^{-5}$ in 
\eqref{eqn:ApprHessian} or if $\langle d,\nabla_\mathcal{M}^2 F(x)d \rangle < 
10^{-12}$ and in Method II to $\alpha^\star = 1.6$ if  $\|d\|_x<10^{-5}$ or 
$\langle 
d,\nabla_\mathcal{M}^2 F(x)d \rangle_x < 10^{-10}.$
We use the following initialization.

\paragraph{Initialization.} For the initialization of the descent algorithm we use the MATLAB function \texttt{schur}, 
which computes a decomposition as in Theorem \ref{thm:Schur}(ii), and choose
\begin{align}
\hat{S} &= \texttt{rand}(n,n), \ \ \ S^{(0)} = \left( 
\textup{diag}(\hat{S}1_n)^{-1}\hat{S} \right)^{\frac{1}{2}} \in \OB, \nonumber  
\\
[Q^{(0)},\hat{V}] &= \texttt{schur}(S^{(0)}\circ S^{(0)},\texttt{'real'}), \ \ \ V^{(0)} = \Pi_{\mathcal{V}_t} \hat{V} = W\circ\hat{V},
\label{eqn:StartPoints}
\end{align}
where the square root is meant componentwise and
\begin{align*}
W_{ij} = \left\{\begin{array}{ll} 0, & j\leq i \ \text{or} \ i = j-1, j = s+2k, k\in\{ 1,\ldots,t\},   \\
         1, & \text{otherwise}. 
				\end{array}\right. 
\end{align*}
For model \eqref{model:new} we additionally set 
\begin{align*}
a^{(0)} = 1_t, \ \ \ b^{(0)} = 0_t.
\end{align*}

We compare the eigenvalue sets using Algorithm \ref{alg:ComparisonEigenvalues}.
\begin{algorithm}[ht]    
\caption{Greedy-type computation of a distance of eigenvalue sets} 
\label{alg:ComparisonEigenvalues}
\begin{algorithmic}   
    \State  \textbf{Input: } Eigenvalue sets $\Lambda, \tilde{\Lambda}$ with $n$ elements 
    \State $d(\Lambda,\tilde{\Lambda}) = 0$
 \For{$k = 1,\dots, n$}
    \State  Choose $\lambda'\in\Lambda,\tilde{\lambda}'\in\tilde{\Lambda}$ with $|\lambda'-\tilde{\lambda}'| = \min\{ |\lambda-\tilde{\lambda}|:
    \lambda\in		    \Lambda,\tilde{\lambda}\in\tilde{\Lambda}\}$
	\State $\Lambda = \Lambda\setminus\{\lambda'\}$
	\State $\tilde{\Lambda} = \tilde{\Lambda}\setminus\{\tilde{\lambda}'\}$
	\State $d(\Lambda,\tilde{\Lambda}) =  \max(|\lambda'-\tilde{\lambda}'|,d(\Lambda,\tilde{\Lambda}))$
 \EndFor
    \end{algorithmic}
\end{algorithm}

We consider two kind of numerical examples. 
The first one is similar to that proposed in \cite[Example 5.1]{ZJB2016}.
Here the number $t$ of complex-conjugate eigenvalue pairs increases linearly in the dimension $n$ of the vector space.
In contrast to Method I, our algorithm has additionally to determine an 
increasing number of components of $a$ and $b$ in $T_{ab}$.
Our Method II appears to be slightly slower than the previous one.
In the second example, we fix $n$ and examine how the performance of the algorithm depends on the number $t$ of complex-conjugate eigenvalue pairs.
In the chosen setting, the problem becomes very ill-posed. Interestingly, both 
methods get closer to the prescribed eigenvalues as $t$ grows.

\paragraph{Example with $t \sim n$.}
We start with a similar setting as in \cite[Example 5.1]{ZJB2016}.
The eigenvalues of a stochastic matrix $A \in \mathbb R^{n,n}$ obtained by
\begin{align} 
\tilde A = \texttt{rand}(n,n), \quad A= \left( \textup{diag}(\tilde A 1_n)^{-1} 
\tilde A \right)
\label{eqn:SampleEigenvalues}
\end{align}
with the MATLAB built-in function \texttt{rand} are used as prescribed spectrum $\Lambda$.
For these matrices,  the number $t$ of pairs of complex conjugate eigenvalues is slightly below  $\frac{n}{2}$, 
see Figures \ref{fig:t_against_n},
where we averaged over $10000$ samples.

\begin{figure}[H]
	\centering
	\includegraphics[width=50mm, scale=0.5]{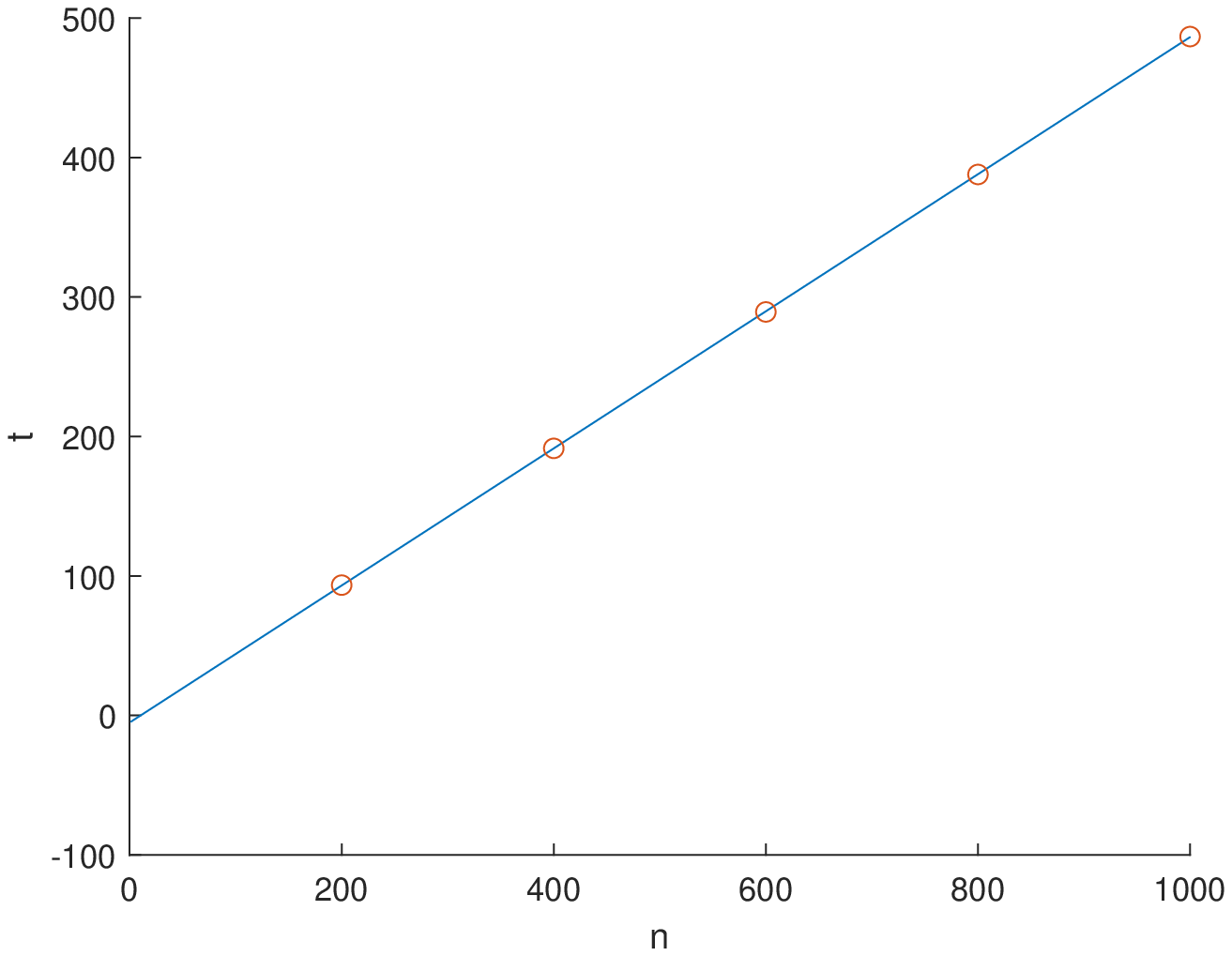}
	\hspace{0.5cm}
	\includegraphics[width=60mm, scale=0.5]{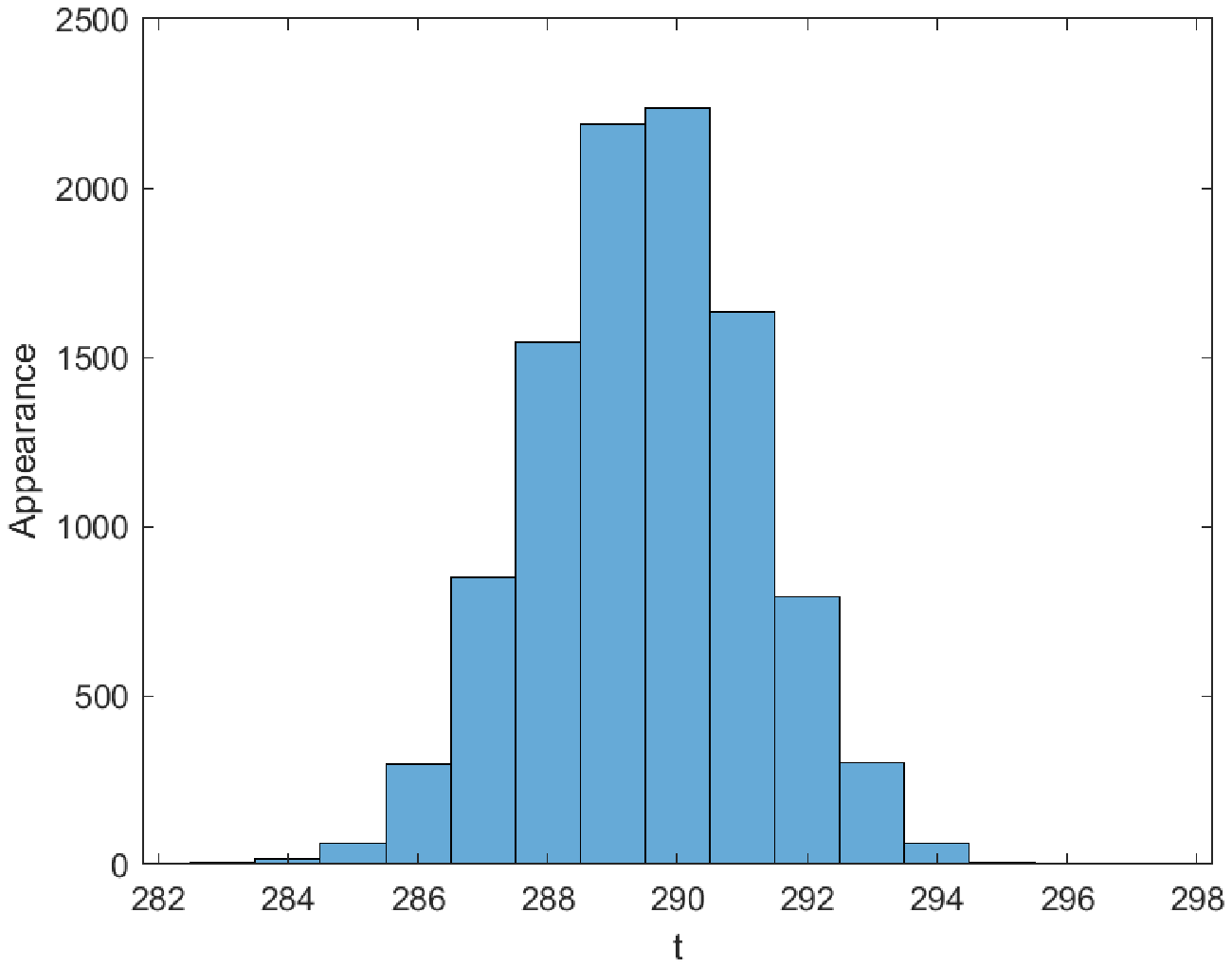}
	\caption{Left: Number $t$ of pairs of complex conjugate eigenvalues of $A$ in \eqref{eqn:SampleEigenvalues} 
	in dependence on the size $n\times n$ of $A$.
	Right: Histogram of $t$ for 10000 samples and $n=600$.}
	\label{fig:t_against_n}
	\end{figure}

We stop the iterations if the following criteria are fulfilled:

\paragraph{Stopping Criterion.}
Method I is stopped if
\begin{align}\label{eqn:StoppingWithFunctionalValueOld}
\norm{S\circ S - Q(L+V)Q^T}_F = \sqrt{2\widetilde{F}(S,Q,V)} < 10^{-12}
\end{align}
and Method II if
\begin{align} \label{eqn:StoppingWithFunctionalValueNew}
\norm{S\circ S - QT_{ab}(L+V)T_{ab}^{-1}Q^T}_F = \sqrt{2 F(S,Q,V,a,b)} < 
10^{-12}.
\end{align}
An alternative would be to stop the algorithms if the eigenvalues of the iterates are close enough to the prescribed eigenvalues 
in the distance measure of Algorithm \ref{alg:ComparisonEigenvalues}. 
Nevertheless, Figure \ref{Fig:test1} indicates that 
the stopping criteria \eqref{eqn:StoppingWithFunctionalValueOld} 
and \eqref{eqn:StoppingWithFunctionalValueNew} yield basically the same eigenvalue distances. 
Besides, our stopping criteria are more convenient, since computation of 
eigenvalues becomes expensive 
if the dimension grows, and since the line search guarantees that the 
functional values decrease monotone, 
which is in general not the case for the eigenvalue distances. 
	
We considered $n=200,400,600,800,1000$ and  compared the number of iterations, 
the runtime and the 
distance of the eigenvalues of the computed stochastic matrix from the given ones,
where we averaged over $50$ samples computed by \eqref{eqn:StartPoints} in each 
dimension. 
Figure \ref{Fig:test1} shows that our new method performs slightly worse, which 
is not surprising since we have to minimize over more variables. 
The exact values of the runtime are shown in Table 
\ref{Tab:runtime_with_add_step}.
Figure \ref{fig:ConvergenceRates} depicts the decay of the target functionals 
in dependence on the number of iterations for $n \in \{200,600,1000\}$.
We also show the corresponding two least square fitting lines.
We observe that Method I and Method II both converge linearly with basically 
the same convergence rate and that the slope of the regression line grows with 
dimension $n$ in both cases.

Finally, we noticed that skipping the additional step in Algorithm~\ref{alg:AlphasqrLineSearch}
slightly reduces the computational effort in high dimensions for the model from 
\cite{ZJB2016} but without a real difference, while our model becomes slightly 
slower, see Figure 
\ref{Fig:test2}. The corresponding values are also given in 
Table \ref{Tab:runtime_without_add_step}. 
Table \ref{tab1} shows the number of line search updates required with and 
without the additional step,
and Table \ref{tab2} the number of functional evaluations. Note that in our 
implementation the evaluation of the gradient, which is only necessary one time 
in each outer iteration $k$ in the algorithm, is a more costly operation than a 
functional evaluation in a line search update.

\begin{figure}[H]
\begin{center}
\begin{tabular}{ccc}
\includegraphics[width=40mm, 
scale=0.35]{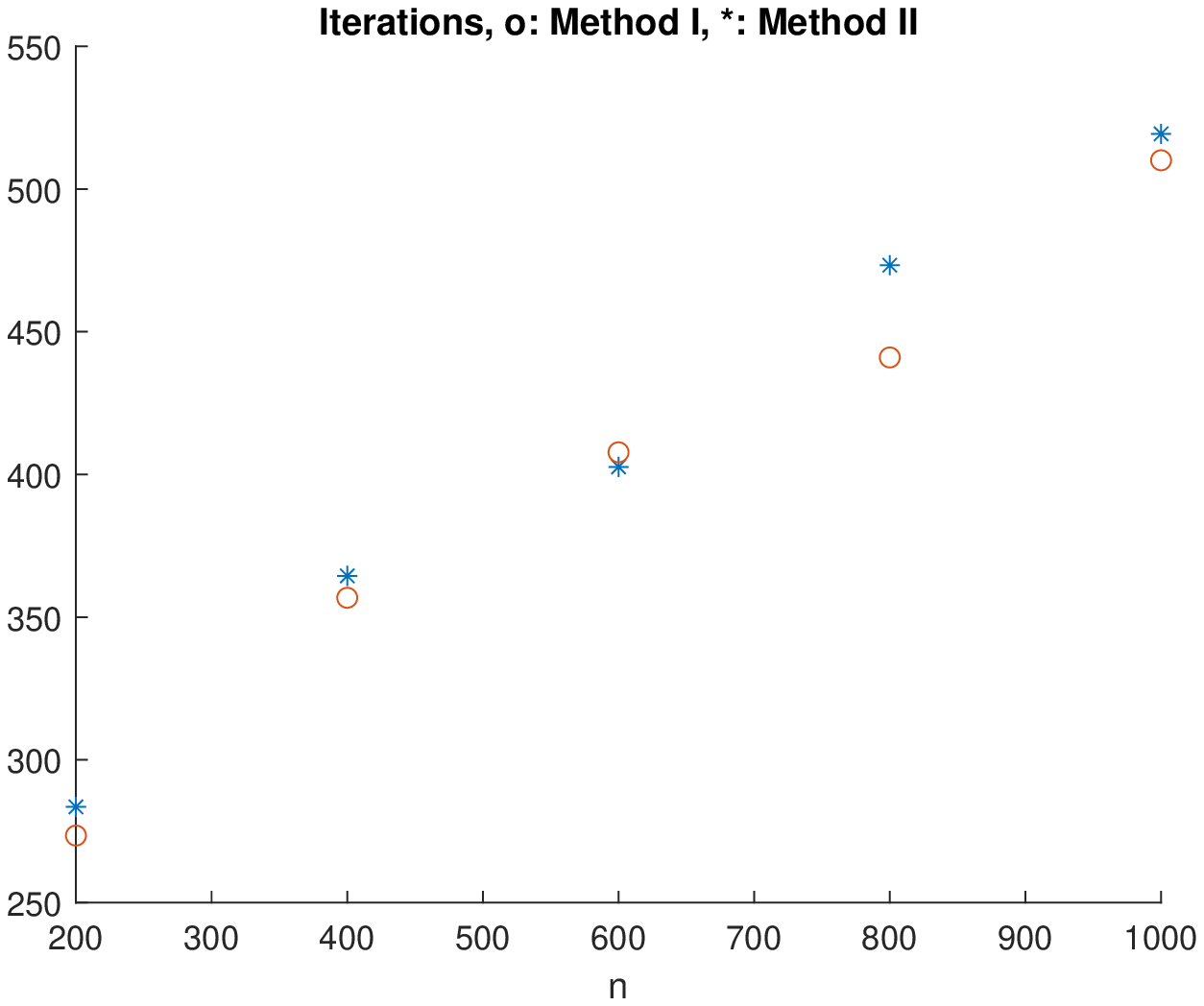} & 
\includegraphics[width=40mm, 
scale=0.35]{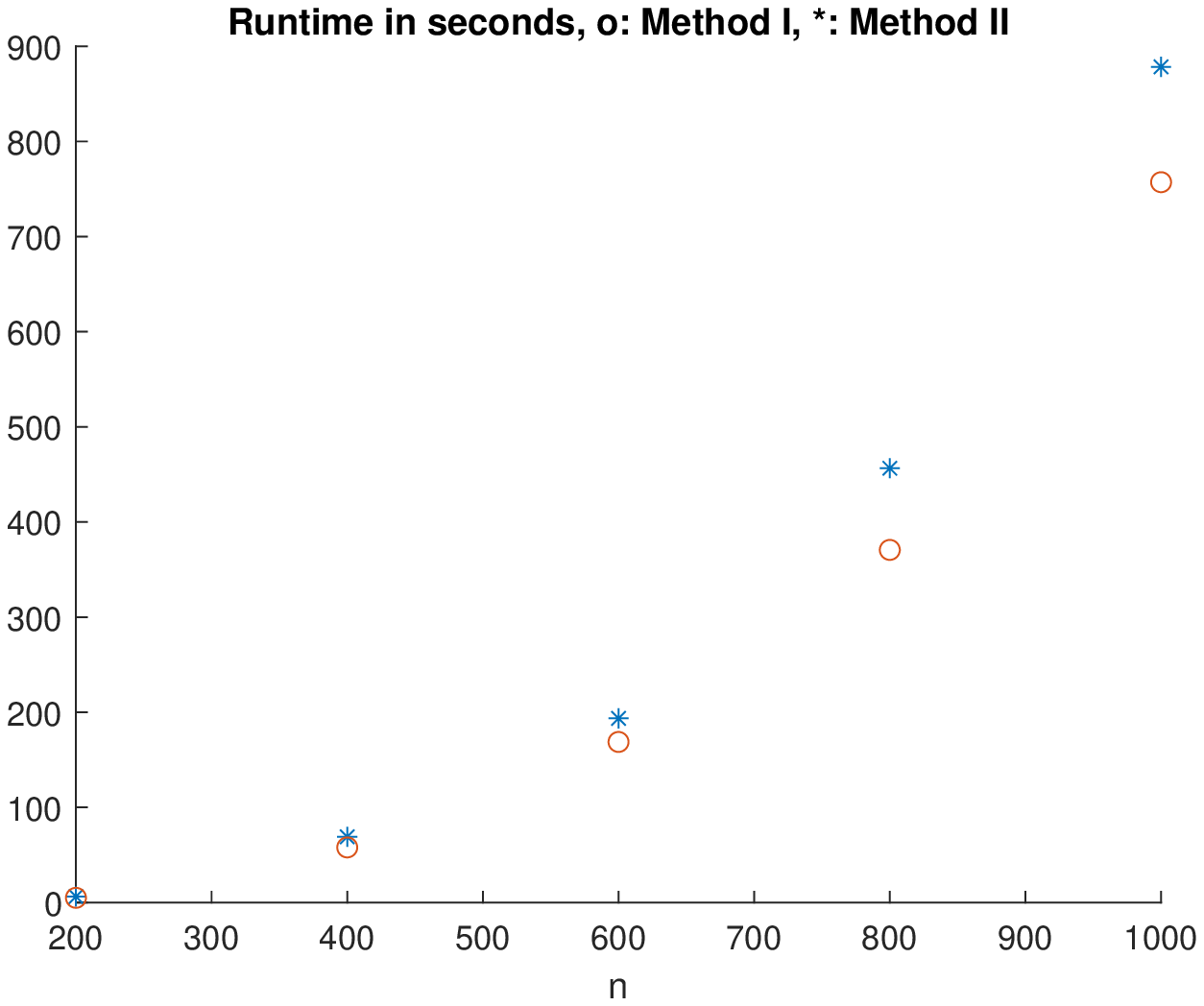} &
\includegraphics[width=40mm, 
scale=0.35]{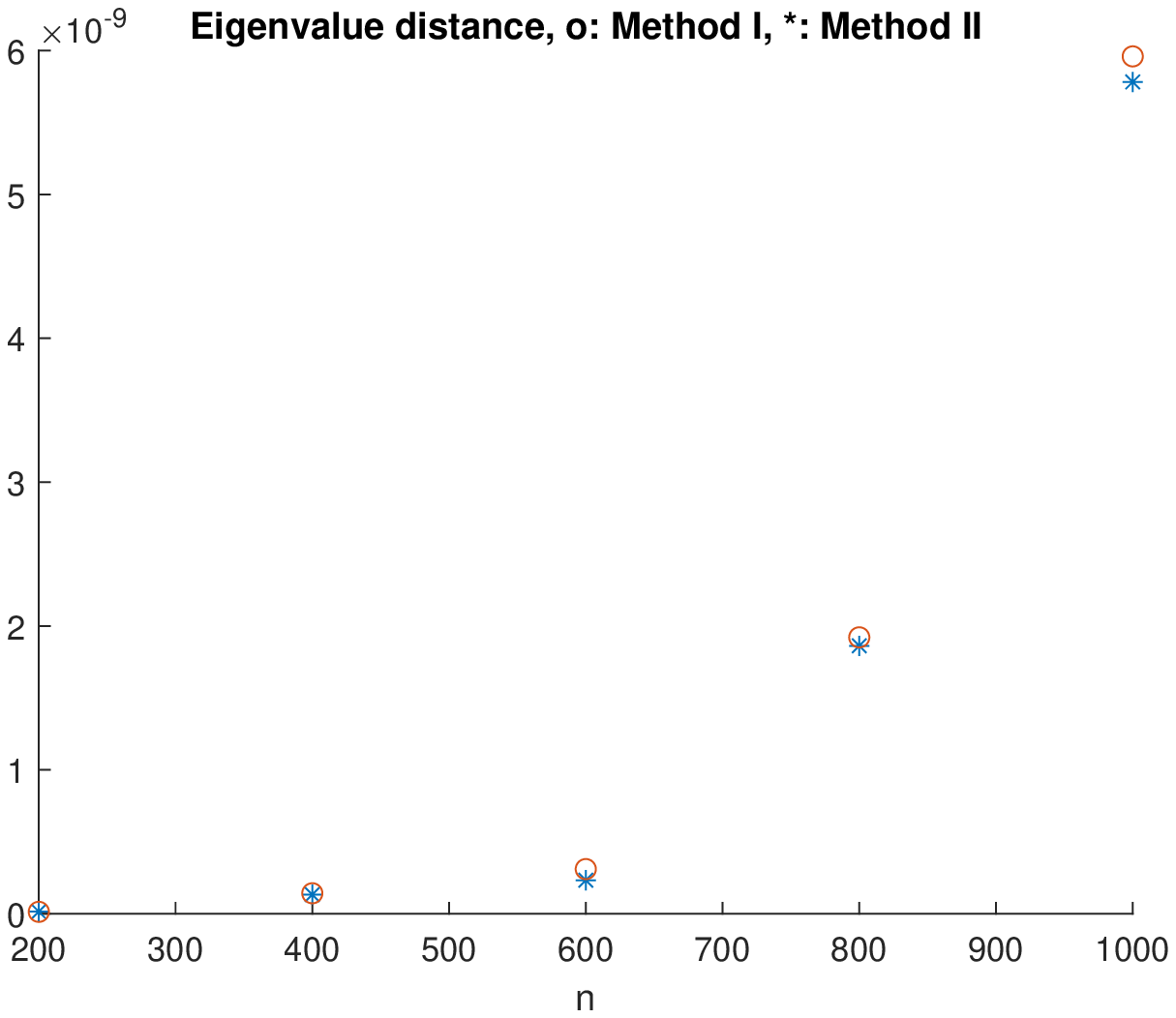}
\\
Number of iterations & Runtime & Eigenvalue distance
\end{tabular}
\end{center}
\caption{Comparison of Methods I and II.}
\label{Fig:test1}    
\end{figure}

\begin{table}[H]
\centering
\begin{tabular}{|l|c|c|c|c|c|}
\hline
 & $n=200$ & $n=400$ & $n=600$ & $n=800$ & $n=1000$ \\
 \hline
 Iterations, I & $273.5\pm 5.6$ & $356.8\pm 8.7$ & $407.8\pm 9.4$ & 
 $441\pm 10.3$ & $510.1\pm 14.2$ \\
 \hline
 Iterations, II & $283.5\pm 6.0$ & $364.5\pm 8.3$ & $402.6\pm 10.6$ & 
 $473.3\pm 10.9$ & $519.3\pm 13.2$ \\
 \hline
Runtime, I & $4.9 \pm 0.1$ & $57.9 \pm 0.6$ & $168.8 \pm 1.6$ & $370.7 \pm 3.6$ 
& 
$757.2 \pm 9.7$ \\
\hline
Runtime, II & $6.2\pm 0.1$ & $69.0\pm 1.4$ & $193.6 \pm 3.9$ & $456.5\pm 9.7$ & 
$878.4\pm 18.5$ \\
\hline
\end{tabular}
\caption{Iterations $k$ and Runtime in $s$ together with std error of 
the mean for $50$ 
samples.}
\label{Tab:runtime_with_add_step}
\end{table}

\begin{figure}
\begin{tabular}{ccc}
\includegraphics[width=40mm, scale=0.5]{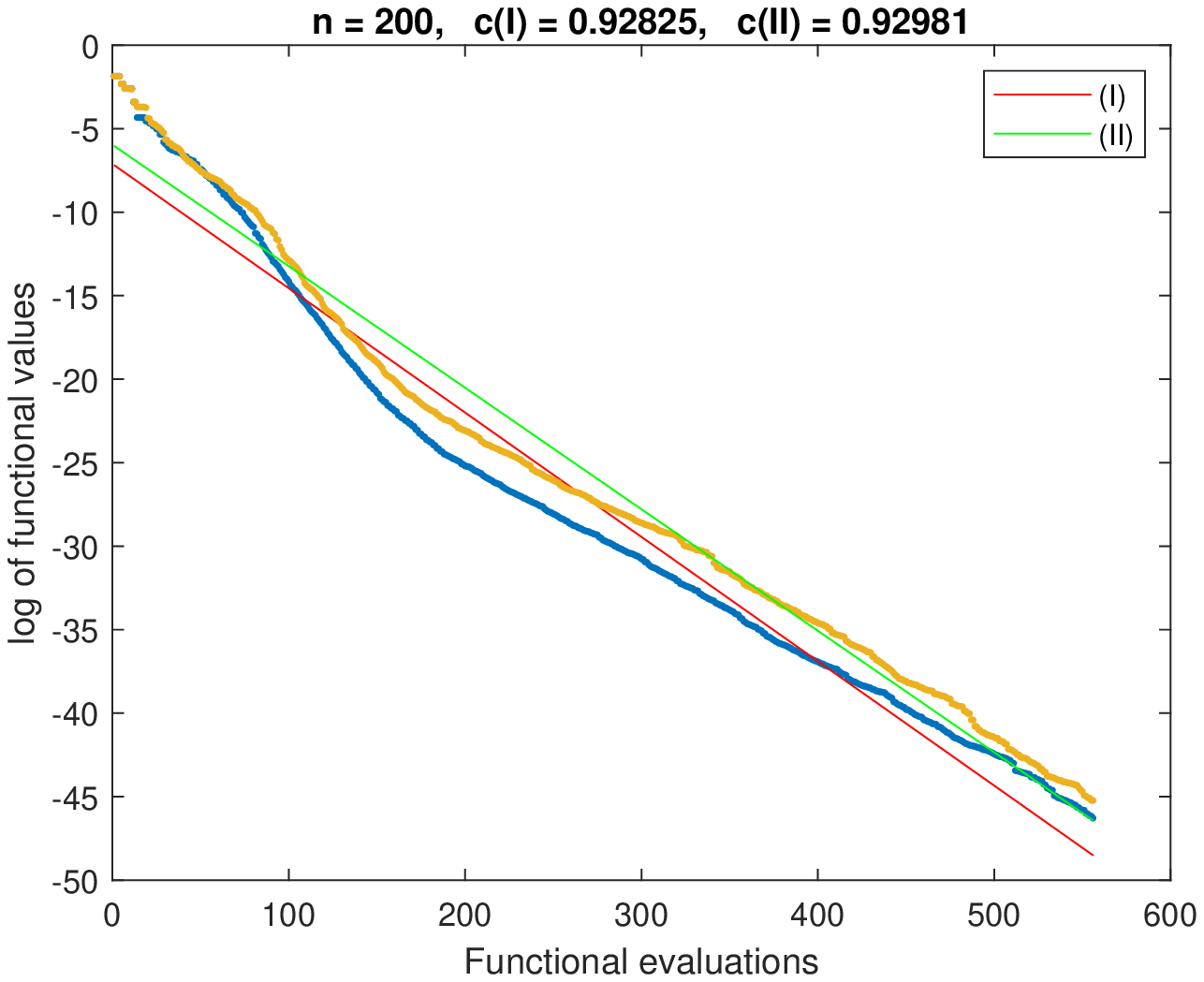} &
\includegraphics[width=40mm, scale=0.5]{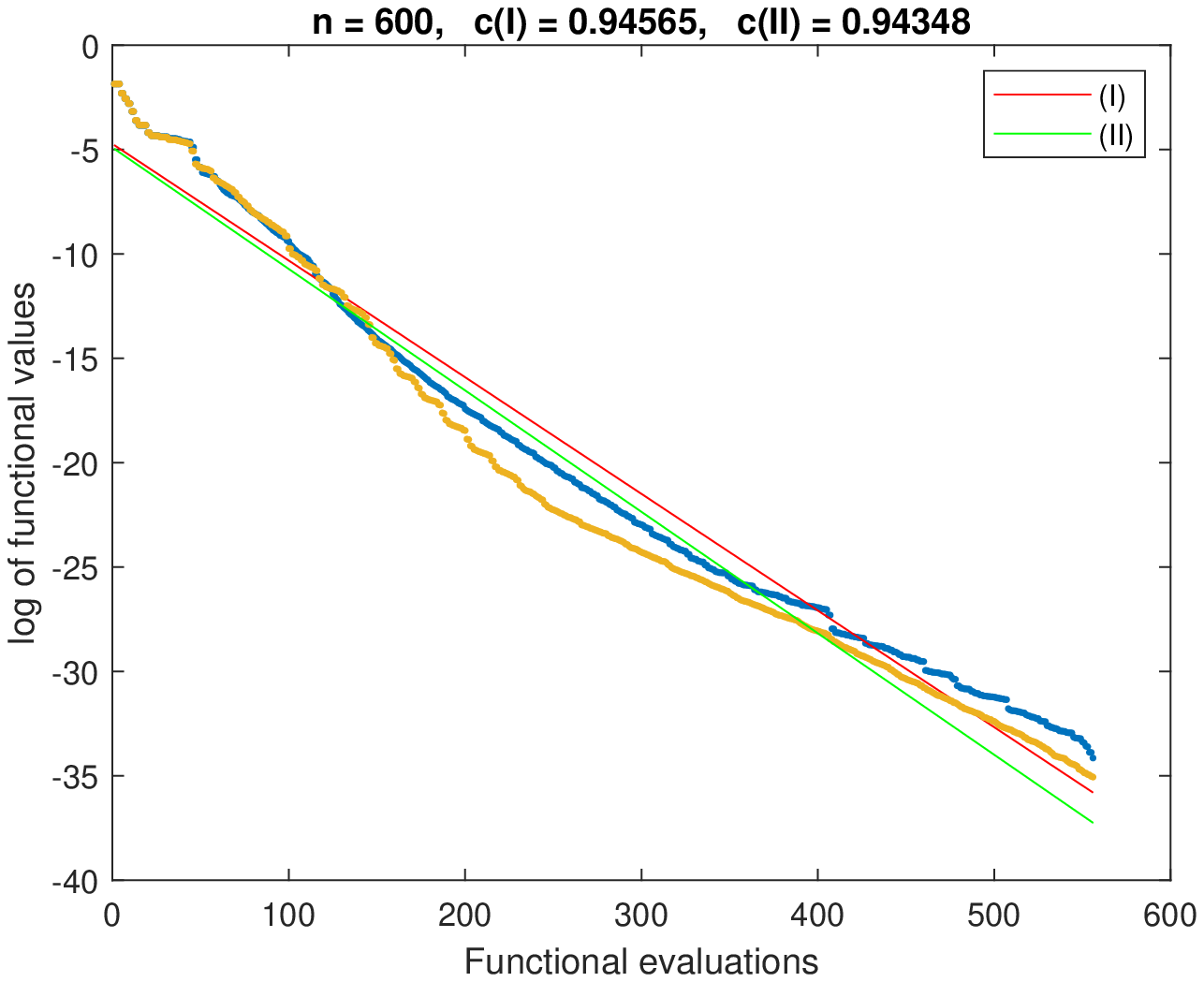} & 
\includegraphics[width=40mm, scale=0.5]{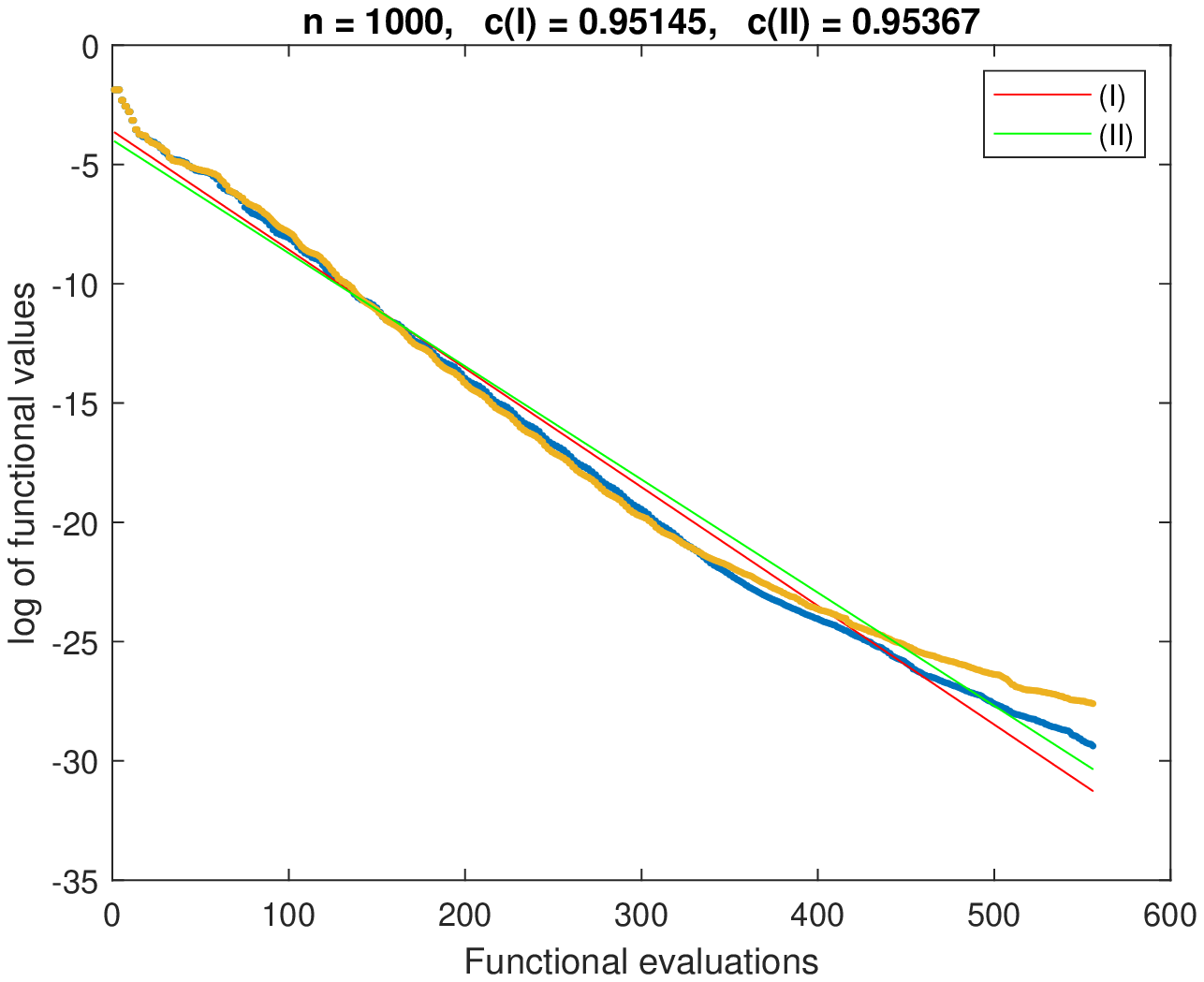}
\end{tabular}
\caption{Comparison of the convergence rates of Methods I (blue, red line) and II (yellow, green line) w.r.t. functional evaluations.}
\label{fig:ConvergenceRates}
\end{figure}

\begin{table}[H] 
\centering
\begin{tabular}{|c|c|c|c|c|c|}
\hline
 & $n=200$ & $n=400$ & $n=600$ & $n=800$ & $n=1000$ \\
 \hline
I & 120.6 & 148.9 & 180.3 & 194.2 & 203.8 \\
\hline
II & 139.9 & 176.1 & 200.5 & 231.2 & 245.9 \\
\hline
I without add. step & 26.8 & 33.0 & 40.7 & 44.6 & 56.6 \\
\hline
II without add step & 52.9 & 60.6 & 66.5 & 69.6 & 73.9 \\
\hline
\end{tabular}
\caption{Line search updates in Algorithm 
\ref{alg:AlphasqrLineSearch}.}\label{tab1}
\end{table}

\begin{table}[H] 
\centering
\begin{tabular}{|c|c|c|c|c|c|}
\hline
 & $n=200$ & $n=400$ & $n=600$ & $n=800$ & $n=1000$ \\
 \hline
I & 658.52 & 867.78 & 1001.8 & 1091.7 & 1234 \\
\hline
II & 670 & 869.4 & 972.3 & 1141.1 & 1243.1 \\
\hline
I without add. step & 334.1 & 469.5 & 582.3 & 702.8 & 807.1 \\
\hline
II without add. step & 371.3 & 508.1 & 593.2 & 657.6 & 748.5 \\
\hline
\end{tabular}
\caption{Number of functional evaluations in Algorithm \ref{alg:AlphasqrLineSearch}.}\label{tab2}
\end{table}

\begin{figure}[H]
\begin{center}
\begin{tabular}{ccc}
\includegraphics[width=40mm, scale=0.5]{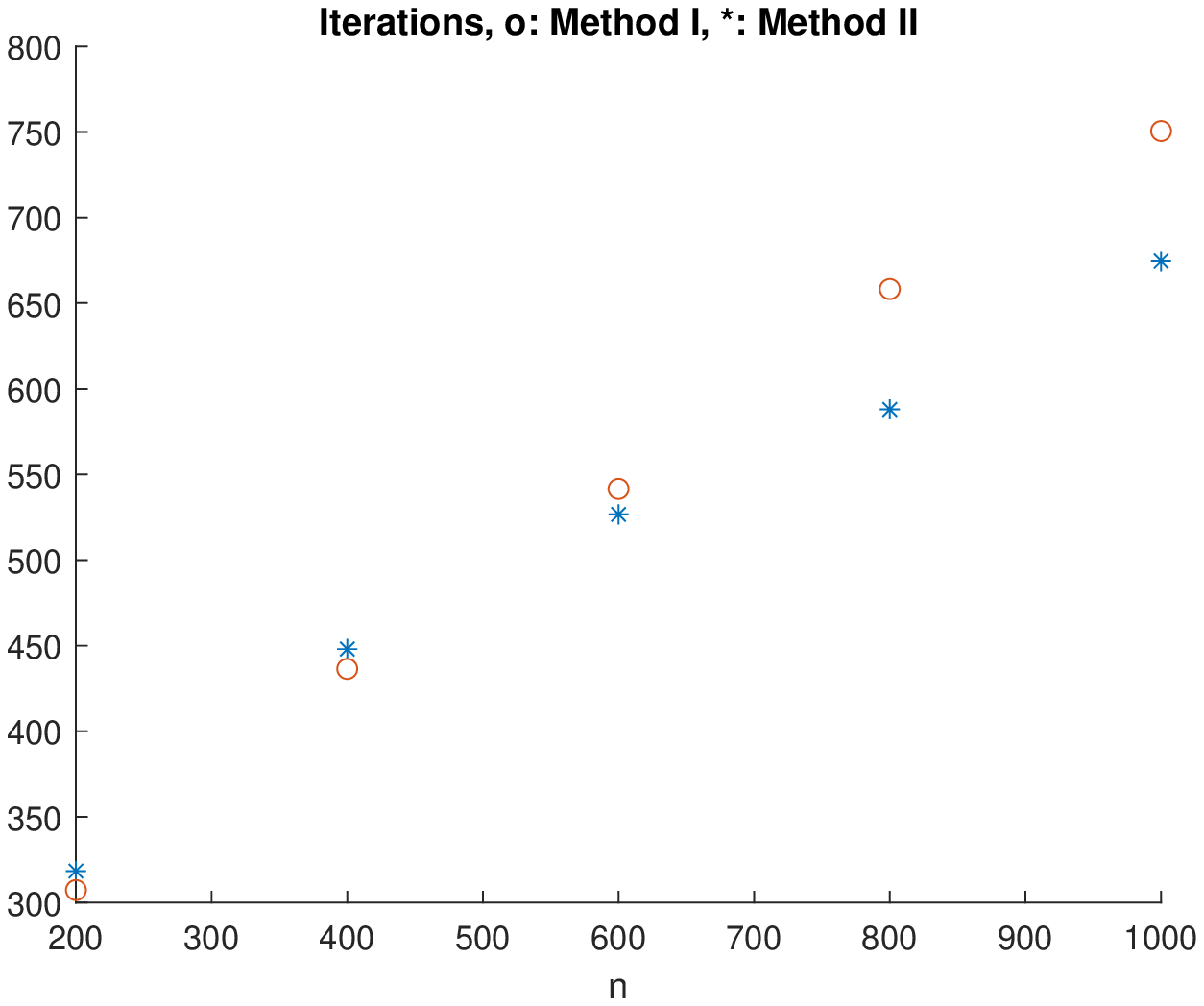} 
& 
\includegraphics[width=40mm, scale=0.5]{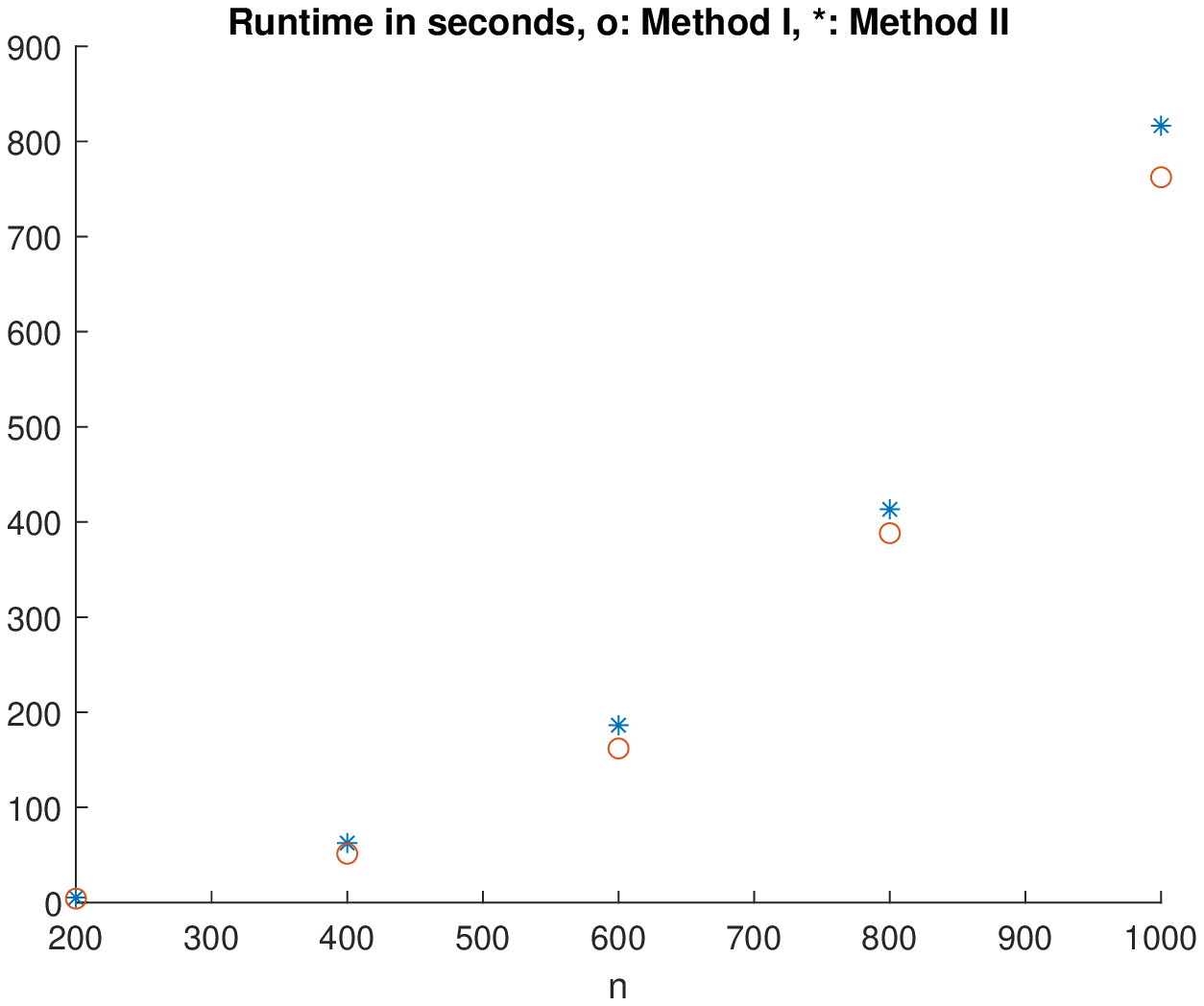} &
\includegraphics[width=40mm, 
scale=0.5]{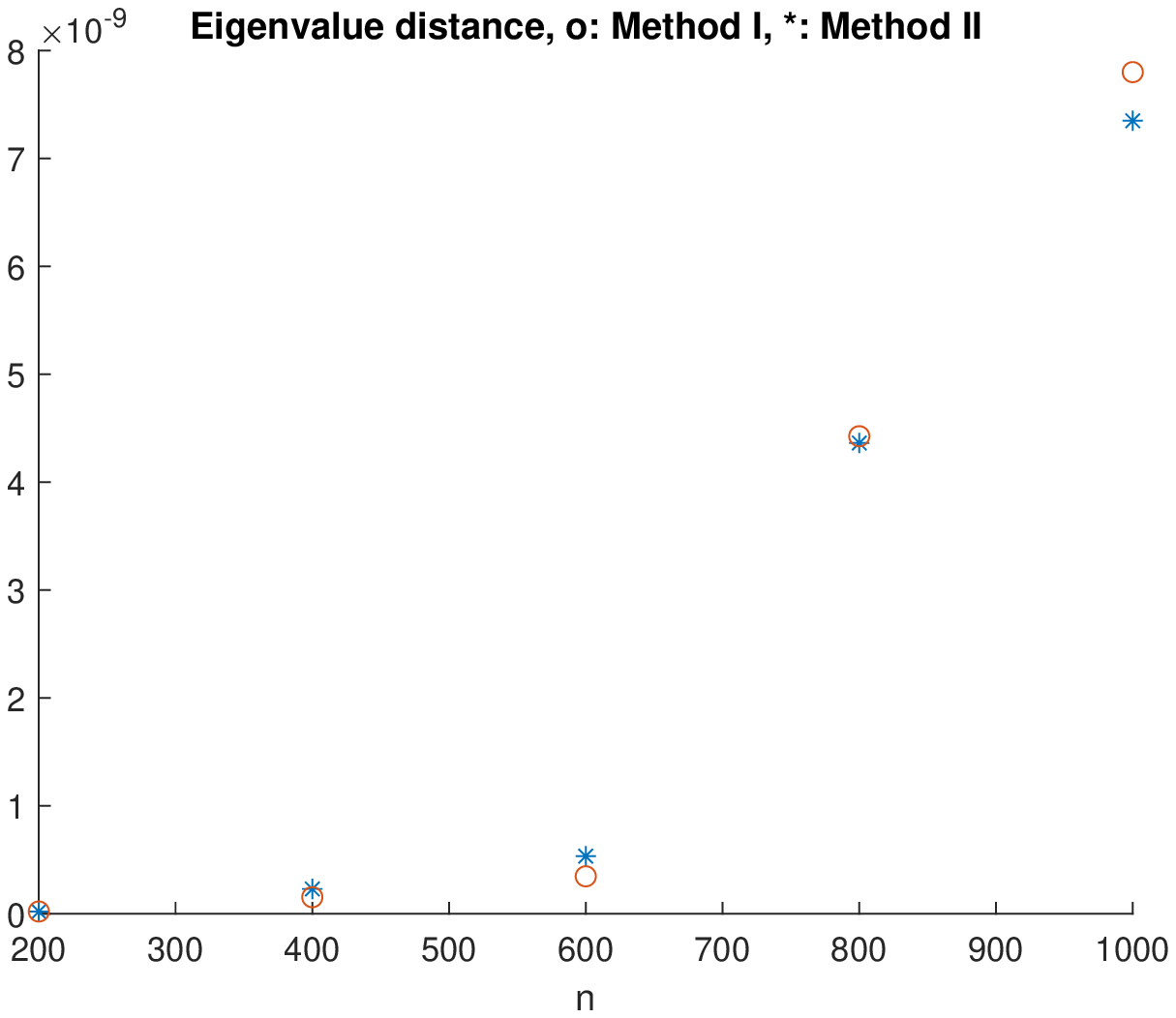}
\\
\textup{Number of iterations} & \textup{Runtime} &Eigenvalue distance
\end{tabular}
\end{center}
\caption{Comparison of Methods I and II if the line search without additional 
step is applied.}
\label{Fig:test2}    
\end{figure}

\begin{table}[H]
	\centering
	\begin{tabular}{|l|c|c|c|c|c|}
		\hline
		& $n=200$ & $n=400$ & $n=600$ & $n=800$ & $n=1000$ \\
		\hline
		Iterations, I & $307.3\pm 2.8$ & $436.5\pm 3.6$ & $541.6\pm 5.5$ & 
		$658.2\pm 9.8$ & $750.5\pm 14.3$ \\
		\hline
		Iterations, II & $318.4\pm 6.2$ & $448.0\pm 15.2$ & $526.7\pm 6.9$ & 
		$587.9\pm 6.4$ & $674.6\pm 9.7$ \\
		\hline
		Runtime, I & $4.1 \pm 0.0$ & $51.2 \pm 0.3$ & $161.9 \pm 1.4$ & $388.2 
		\pm 5.1$ 
		& 
		$762.4 \pm 12.7$ \\
		\hline
		Runtime, II & $5.2 \pm 0.1$ & $62.5\pm 2.0$ & $186.4 \pm 2.0$ & 
		$413.4\pm 3.7$ & 
		$816.6\pm 11.0$ \\
		\hline
	\end{tabular}
	\caption{Iterations $k$ and Runtime in $s$ together with std error of 
		the mean for $50$ samples if the line search without additional step is 
		applied.}
	\label{Tab:runtime_without_add_step}
\end{table}

\paragraph{Example with prescribed numbers of complex eigenvalues.} 
For $t \in \{ 1,\dots, \lfloor{n/2}\rfloor \}$, $n\geq 3$, 
we want to prescribe 
$t$ complex conjugate and 
$s$ real eigenvalues of a stochastic matrix. 
The theorem of Karpelevic \cite{Ka51} determines only the set of points 
$\Theta_n \subset \mathbb C$ that are eigenvalues of any
$n\times n$ stochastic matrix without characterizing the relation the eigenvalues of one stochastic matrix have to fulfill.
In other words, not every self-conjugate set with entries from $\Theta_n$ is the spectrum of a stochastic matrix.
We will make use of the following observation.

\begin{proposition}
Let $\Lambda = (1,\lambda_2,\dots,\lambda_{n})^\tT$ be a self-conjugate vector such that 
\begin{align*}
|\lambda_k| \leq \frac{1}{2n}
\end{align*}
for all $k=2,\dots,n$. Then there exists $A\in\mathcal{S}(n)$ whose eigenvalues are the components of $\Lambda$.
\end{proposition}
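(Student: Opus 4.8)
The plan is to realize $A$ as a rank-one "uniform'' stochastic matrix plus a small perturbation carrying the non-Perron spectrum. Write $P \coloneqq \tfrac1n 1_n 1_n^\tT$, which is stochastic with spectrum $\{1,0,\ldots,0\}$, and look for $A = P + C$ with a real matrix $C$ satisfying $C 1_n = 0$ and $1_n^\tT C = 0$. The first condition keeps all row sums equal to $1$, and the two conditions together give $PC = CP = 0$, so that $\mathbb R^n = \mathrm{span}(1_n)\oplus 1_n^\perp$ is an $A$-invariant splitting on which $A$ acts as the scalar $1$ and as $C|_{1_n^\perp}$, respectively. Hence it suffices to choose $C$ so that $C|_{1_n^\perp}$ has eigenvalues $\lambda_2,\ldots,\lambda_n$ and so that every entry of $C$ is at least $-\tfrac1n$, which then forces $A\ge 0$.

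To construct such a $C$, I would first form a real matrix $M\in\mathbb R^{n-1,n-1}$ with spectrum $\{\lambda_2,\ldots,\lambda_n\}$: since $\Lambda$ is self-conjugate, this multiset is closed under conjugation, so one may take $M$ block diagonal with the real $\lambda_k$ as $1\times 1$ blocks and blocks $\lambda_j^{[2]} = \bigl(\begin{smallmatrix}\lambda_R&\lambda_I\\-\lambda_I&\lambda_R\end{smallmatrix}\bigr)$ for the conjugate pairs, exactly as in Theorem \ref{thm:Schur}i) with $V = 0$. Next, pick any real matrix $U\in\mathbb R^{n,n-1}$ whose columns form an orthonormal basis of $1_n^\perp$ and set $C \coloneqq U M U^\tT$. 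Then $U^\tT 1_n = 0$ yields $C 1_n = 0$ and $1_n^\tT C = 0$, and since $U^\tT U = I_{n-1}$ the restriction $C|_{1_n^\perp}$ is similar to $M$, so its eigenvalues are exactly $\lambda_2,\ldots,\lambda_n$. It then remains only to bound the entries of $C$.

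The key estimate is $\|C\|_{\max} \le \|C\|_2 \le \|U\|_2\,\|M\|_2\,\|U^\tT\|_2 = \|M\|_2$, using $|e_i^\tT C e_j|\le\|C\|_2$ and that $U$ has orthonormal columns. For the block diagonal $M$, each $1\times 1$ block has operator norm $|\lambda_k|$, and each $2\times 2$ block $\lambda_j^{[2]}$ equals the common modulus of the corresponding conjugate pair times a rotation matrix, hence also has operator norm equal to that modulus; therefore $\|M\|_2 = \max_{k\ge 2}|\lambda_k|\le\tfrac1{2n}$ by hypothesis. Consequently $A_{ij} = \tfrac1n + C_{ij} \ge \tfrac1n - \tfrac1{2n} = \tfrac1{2n} > 0$ for all $i,j$, so $A$ is an (entrywise positive) stochastic matrix with $A 1_n = 1_n$ whose spectrum is $\{1\}\cup\mathrm{spec}(M) = \{1,\lambda_2,\ldots,\lambda_n\}$, which is the claim.

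I do not expect a serious obstacle here; the argument is essentially routine once the splitting $A = P + C$ is set up. The points that need care are the bookkeeping guaranteeing that $1_n$ stays an eigenvector for the eigenvalue \emph{exactly} $1$ while the rest of the spectrum decouples (this is what the two-sided null-vector conditions on $C$ buy), and the elementary observation that the real block form of Theorem \ref{thm:Schur}i) has spectral norm equal to the largest modulus of its eigenvalues, which is precisely what makes the crude bound $\|C\|_{\max}\le\|M\|_2$ strong enough to beat the threshold $\tfrac1n$. The constant $\tfrac1{2n}$ in the hypothesis is exactly what this argument requires and is presumably far from optimal.
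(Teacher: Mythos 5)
Your proof is correct, and it takes a genuinely different route from the paper's. The paper argues by citation: it uses the nesting $\Theta_m \subseteq \Theta_{m+1}$ of the Karpelevic regions together with Corollary \ref{CorNonnegToStoch} to place each $\lambda_k$ in $\Theta_n$, and then invokes an existence result from \cite{CG05} (Corollary~4.18 and p.~98) saying that a stochastic matrix with spectrum $\Lambda$ exists once $2n\max_{k\ge 2}|\lambda_k|\le 1$. Your argument is instead self-contained and constructive: you exhibit $A = \tfrac1n 1_n 1_n^\tT + U M U^\tT$ explicitly, where $M$ is the real block-diagonal form of Theorem~\ref{thm:Schur}i) for the non-Perron spectrum and $U$ is any orthonormal basis of $1_n^\perp$, and you verify nonnegativity via the elementary chain $\|C\|_{\max}\le\|C\|_2\le\|M\|_2=\max_{k\ge 2}|\lambda_k|$. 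Each step checks out: the two-sided null conditions $C1_n=1_n^\tT C=0$ do force the invariant splitting and $PC=CP=0$; the $2\times 2$ blocks $\lambda_j^{[2]}$ are indeed scalar multiples of rotations, so $\|M\|_2$ is the largest eigenvalue modulus; and orthonormality of the columns of $U$ gives the norm estimate. Your construction buys more than the paper's proof: it produces an entrywise \emph{positive} stochastic matrix with the prescribed spectrum and avoids any reliance on the machinery behind $\Theta_n$. One minor quibble: the constant $\tfrac1{2n}$ is not exactly what your argument requires — it actually tolerates $\max_{k\ge 2}|\lambda_k|\le\tfrac1n$ for nonnegativity (and anything strictly below $\tfrac1n$ for positivity) — but this only means your method is a bit stronger than the stated hypothesis needs.
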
 

\begin{proof}
We have $\Theta_m\subseteq \Theta_{m+1}$ for all $m\in\N$, see \cite{JPMP18}.
By Corollary \ref{CorNonnegToStoch}, it follows for $n \geq 3$ and all 
$k=2,\ldots,n$ that
\begin{align*}
\lambda_k \in B_\frac{1}{2n} \subseteq B_\frac{1}{2} \subseteq \Theta_3 \subseteq \Theta_n,
\end{align*}
where $B_r\coloneqq \{z\in\C:\ |z|\leq r\}$.
By~\cite[Corollary 4.18]{CG05} and \cite[p. 98]{CG05}, there exists $A\in\mathcal{S}(n)$ with eigenvalues from $\Lambda$ 
if $2n \max_{k=2,\dots,n} |\lambda_k|  \le 1$, which is just fulfilled for our 
setting.
\end{proof}

We choose $s-1$ real eigenvalues according to the uniform 
distribution on the interval \\
$[-1/(2n),1/(2n)]$ and $t$ pairs of conjugate complex eigenvalues according to 
the uniform distribution on $B_{1/(2n)}$ in the complex plane, which we 
simulate by
\begin{align*}
x,y = \texttt{randn}, \ \ \ \lambda = \frac{(x \pm iy) \cdot 
\sqrt{\texttt{rand}}}{\sqrt{x^2+y^2}},
\end{align*}
 cf. \cite[Algorithm 2.5.4]{RK16}.

Then, by the above proposition, there exists a stochastic matrix with these 
eigenvalues.
For larger $n$ the $n-1$ eigenvalues $\not = 1$ become rather small and problem 
(StIEP) is severely ill-posed.
Hence, we only consider a small size of $n=20$.
We observed that for small $t$ the distance to the input eigenvalues 
oscillates although the value of the functional decreases, while for larger 
$t$, these oscillations became negligible.
Therefore we stop the experiment after $3000$ iterations and choose those matrix having the smallest eigenvalue
distance from the given ones. Table
\ref{tab:TableVarying_t_min_dist} 
shows the minimal distance together with  the corresponding iteration number 
$k$ for various $t$ averaged over $200$ samples, respectively.

We observe that, as $t$ gets larger, better results are achieved. The iteration 
number and the eigenvalue distance appear to be rather independent of $t$ and 
the chosen method. The corresponding values from the stopping criterion are 
between $10^{-7}$ 
and $10^{-8}$, in particular the algorithms reach the minimal eigenvalue 
distance before they would have been stopped in the previous example.

Using the exponential maps as retractions we see that the minimal eigenvalue 
distances 
are attained much earlier and that the eigenvalues become 
slightly less close, 
see Table~\ref{tab:TableVarying_t_min_distExpMaps}. 

If the different retraction apart form the exponential map is chosen, both 
 methods start to run into problems with 
descending, but only for values smaller than approx. $10^{-15}$ in the stopping 
criterion, i.e. much later than it is usually stopped, which is supposed to be 
due to rounding errors. Interestingly, 
this 
behaviour is never observed, if the exponential maps are chosen as retractions. 
However, this is not an argument for choosing the exponential map, 
since the problems in descending occur at a point where basically rounding 
errors are minimized and we have observed that the choice of the exponential 
maps is costlier in total. Besides, we have seen that the chosen 
retractions different from the exponential map get closer to the eigenvalues in 
our experiment, see 
Table~\ref{tab:TableVarying_t_min_dist} and 
Table~\ref{tab:TableVarying_t_min_distExpMaps}.

\setlength\extrarowheight{2pt}

\begin{table}[H]
\begin{tabular}{|l|c|c|c|c|c|}
\hline 
$t$ & 1 & 2 & 3 & 4 & 5 \\
\hline
I: eigenvalue distance & $4.0\cdot 10^{-6}$ & $4.1\cdot 10^{-6}$ & $3.9\cdot 
10^{-6}$ & $1.2\cdot 10^{-7}$ & $8.7\cdot 10^{-9}$ \\
\hline
I: iteration $k$  & 1548.9 & 1651.3 & 1506 & 1539.3 & 1607.1 \\
\hline 
II: eigenvalue distance & $3.0\cdot 10^{-6}$ & $7.7\cdot 10^{-6}$ & $3.0\cdot 
10^{-6}$& $1.4\cdot 10^{-7}$ & $7.3\cdot 10^{-9}$ \\
\hline
II: iteration $k$  & 1666.2 & 1617.5 & 1676.9 & 1512.1 & 1647.5 \\
\hline
\end{tabular}
\end{table}

\begin{table}[H]
\begin{tabular}{|l|c|c|c|c|}
\hline
$t$ & 6 & 7 & 8 & 9 \\
\hline
I: eigenvalue distance& $2.7\cdot 10^{-10}$ & $3.1\cdot 10^{-11}$ & $7.5\cdot 
10^{-12}$ & $9.0\cdot 10^{-13}$ \\
\hline
I: iteration $k$ & 1634 & 1479.2 & 1580.5 & 1598.6 \\
\hline
II: eigenvalue distance  & $2.7\cdot 10^{-10}$ & $3.1\cdot 10^{-11}$ & 
$7.0\cdot 10^{-12}$ & $9.8\cdot 10^{-13}$ \\
\hline
II: iteration $k$ & 1625.8 & 1551.5 & 1618.2 & 1587.7 \\
\hline
\end{tabular}
\caption{Minimal eigenvalue distances and iteration number $k \le 3000$, where it is achieved.}
\label{tab:TableVarying_t_min_dist}
\end{table}

\setlength\extrarowheight{2pt}

\begin{table}[H]
	\begin{tabular}{|l|c|c|c|c|c|}
		\hline 
		$t$ & 1 & 2 & 3 & 4 & 5 \\
		\hline
		I: eigenvalue distance & $4.2\cdot 10^{-5}$ & $4.0\cdot 10^{-5}$ & 
		$1.4\cdot 10^{-5}$ & $2.7\cdot 10^{-6}$ & $5.2\cdot 10^{-7}$ \\
		\hline
		I: iteration $k$  & 156.3 & 160.2 & 160.0 & 161.3 & 160.4 \\
		\hline 
		II: eigenvalue distance & $3.4\cdot 10^{-5}$ & $4.3\cdot 10^{-5}$ & 
		$1.5\cdot 
		10^{-5}$& $2.4\cdot 10^{-6}$ & $9.7\cdot 10^{-7}$ \\
		\hline
		II: iteration $k$  & 161.3 & 161.2 & 162.1 & 160.4 & 164.2 \\
		\hline
	\end{tabular}
\end{table}

\begin{table}[H]
	\begin{tabular}{|l|c|c|c|c|}
		\hline
		$t$ & 6 & 7 & 8 & 9 \\
		\hline
		I: eigenvalue distance& $1.2\cdot 10^{-8}$ & $4.4\cdot 10^{-10}$ & 
		$1.3\cdot 
		10^{-10}$ & $1.0\cdot 10^{-11}$ \\
		\hline
		I: iteration $k$ & 160.5 & 159.3 & 159.8 & 162.0 \\
		\hline
		II: eigenvalue distance  & $7.8\cdot 10^{-9}$ & $6.2\cdot 10^{-10}$ & 
		$7.4\cdot 10^{-11}$ & $9.9\cdot 10^{-12}$ \\
		\hline
		II: iteration $k$ & 162.3 & 161.5 & 160.3 & 158.8 \\
		\hline
	\end{tabular}
	\caption{Corresponding results as in Table 
	\ref{tab:TableVarying_t_min_dist} if the exponential maps as retraction are chosen.}
	\label{tab:TableVarying_t_min_distExpMaps}
\end{table}

\section*{Appendix} \label{app}

The following proof of Theorem \ref{thm:SameConvergenceTheorem} is along the lines of \cite{ZJB2016}.
Throughout this section, let $x^{(k)} = (S^{(k)},Q^{(k)},V^{(k)},a^{(k)},b^{(k)}) \in \mathcal M_t$,
$d^{(k)}$, $y^{(k)}$ be the iterates generated by Algorithm~\ref{alg:cg_1} with $\alpha^{(k)}$ as in Algorithm~\ref{alg:AlphasqrLineSearch}.
Further, let
$$
{\mathcal Z} \coloneqq \mathrm{lev}_{F(x^{(0)})} F.
$$
By Proposition \ref{lem:ExistenceMinimizers}, $\mathcal Z \subset \mathcal{M}_t$ is compact
and $x^{(k)} \in \mathcal{Z}$ for all $k \in \mathbb N$.
The proof is based on various auxiliary lemmas.


\begin{lemma}\label{lem:ConvergenceOfAlphakdk}
	It holds
	\begin{align}
	\lim_{k\to\infty} \alpha^{(k)}  \bigl \Vert d^{(k)} \bigr \Vert_{x^{(k)}} = 0.
	\end{align}
\end{lemma}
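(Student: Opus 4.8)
The plan is to distil a summable decrease estimate from Line Search Algorithm~\ref{alg:AlphasqrLineSearch} and to combine it with the trivial bound $F\ge 0$.

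First I would establish that for every $k$ (as long as the algorithm has not stopped) the accepted step size satisfies the sufficient decrease inequality
\begin{equation} \label{eqn:SuffDecPlan}
F\bigl(x^{(k+1)}\bigr) - F\bigl(x^{(k)}\bigr) \;<\; -\,\delta \bigl(\alpha^{(k)}\bigr)^{2} \bigl\Vert d^{(k)}\bigr\Vert_{x^{(k)}}^{2},
\end{equation}
where $x^{(k+1)} = \mathcal{R}_{x^{(k)}}\bigl(\alpha^{(k)} d^{(k)}\bigr)$. The point is that both \texttt{while}-loops in Algorithm~\ref{alg:AlphasqrLineSearch} terminate. The first loop contracts $\tilde\alpha$ geometrically; since $d^{(k)}$ is a descent direction by Proposition~\ref{thm:AlwaysDescentDirection}~ii) and the derivative of $\alpha\mapsto F\bigl(\mathcal{R}_{x^{(k)}}(\alpha d^{(k)})\bigr)$ at $\alpha=0$ equals $\bigl\langle \nabla_{\mathcal{M}_t} F(x^{(k)}), d^{(k)}\bigr\rangle_{x^{(k)}} < 0$ (by the retraction property $D\mathcal{R}_x(0_x)=\mathrm{id}$), the linear term in $\alpha$ eventually dominates $-\delta\alpha^2\Vert d^{(k)}\Vert_{x^{(k)}}^2$, so the first loop exits and \eqref{eqn:SuffDecPlan} holds at exit. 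If the first loop exits immediately with $l=0$, then $\tilde\alpha^{(0)}$ already fulfils \eqref{eqn:SuffDecPlan}, and the additional step enlarges $\tilde\alpha$ geometrically \emph{while} \eqref{eqn:SuffDecPlan} stays valid; since $F\ge 0$, this can persist only while $\bigl(\tilde\alpha^{(l)}\bigr)^2 < F(x^{(k)})/\bigl(\delta \Vert d^{(k)}\Vert_{x^{(k)}}^2\bigr)$, so the additional loop terminates, and the concluding decrement $l\leftarrow l-1$ returns the last $\tilde\alpha^{(l)}$ for which \eqref{eqn:SuffDecPlan} held. Hence \eqref{eqn:SuffDecPlan} is valid in all cases.

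Next, since $F=\tfrac12\Vert\cdot\Vert_F^2\ge 0$, \eqref{eqn:SuffDecPlan} shows that $\{F(x^{(k)})\}_k$ is monotonically decreasing and bounded below, hence convergent. Summing \eqref{eqn:SuffDecPlan} over $k=0,\dots,K$ and letting $K\to\infty$ yields the telescoping bound
\begin{equation}
\delta \sum_{k=0}^{\infty} \bigl(\alpha^{(k)}\bigr)^{2} \bigl\Vert d^{(k)}\bigr\Vert_{x^{(k)}}^{2} \;\le\; F\bigl(x^{(0)}\bigr) - \lim_{k\to\infty} F\bigl(x^{(k)}\bigr) \;<\; \infty .
\end{equation}
A convergent series has a vanishing general term, so $\bigl(\alpha^{(k)}\bigr)^{2}\Vert d^{(k)}\Vert_{x^{(k)}}^{2}\to 0$, and therefore $\alpha^{(k)}\Vert d^{(k)}\Vert_{x^{(k)}}\to 0$, which is the claim.

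I expect the only genuinely delicate point to be the first step, namely verifying that the \emph{additional} step of the line search still returns a step size obeying \eqref{eqn:SuffDecPlan} and that it terminates; this is exactly where the nonnegativity of $F$ and the geometric growth of $\tilde\alpha$ are used. Everything afterwards is the classical telescoping argument, here merely transported to the manifold $\mathcal{M}_t$ via the retraction $\mathcal{R}$.
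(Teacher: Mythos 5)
Your proof is correct and follows essentially the same route as the paper: the sufficient-decrease inequality guaranteed by the line search yields a telescoping bound, and hence a summable series whose general term must vanish. The additional care you take to verify that both while-loops in Algorithm~\ref{alg:AlphasqrLineSearch} terminate (and that the returned step size still obeys the decrease inequality) is welcome detail that the paper leaves implicit, and your use of $F\ge 0$ as the lower bound is a slightly more direct alternative to the paper's appeal to compactness of the level set via Proposition~\ref{lem:ExistenceMinimizers}.
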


\begin{proof}
	The line search in Algorithm~\ref{alg:AlphasqrLineSearch} ensures that 
	\begin{align*}
	\delta \sum_{k=0}^\infty {\alpha^{(k)}}^2{\bigl\| d^{(k)}}\bigr\|_{x^{(k)}}^2 
	\leq 
	\sum_{k=0}^\infty \Bigl(F\bigl(x^{(k)}\bigr)-F\bigl(x^{(k+1)}\bigr)\Bigr) = F\bigl(x^{(0)}\bigr) - \inf_{k\in\N} F\bigl(x^{(k)}\bigr) < \infty,
	\end{align*}
	where the last inequality follows since $\inf_{x \in \mathcal Z} F(x)$ is finite by Proposition~\ref{lem:ExistenceMinimizers}.
\end{proof}

\begin{lemma} \label{prop:YLeqSearchDirection}
	There exists a constant $C>0$ such that for all $k$ sufficiently large, 
	\begin{align*}
	\bigl \Vert y^{(k)} \bigr \Vert_{x^{(k+1)}} \leq C \alpha^{(k)}\bigl \Vert d^{(k)} \bigr \Vert_{x^{(k)}}.
	\end{align*}
\end{lemma}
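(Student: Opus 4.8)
The plan is to reduce the estimate to a Lipschitz bound in a fixed Euclidean space, using that, by Proposition~\ref{lem:ExistenceMinimizers}, the sublevel set $\mathcal{Z} = \mathrm{lev}_{F(x^{(0)})} F$ is compact and contains every iterate $x^{(k)}$, and that on $\mathcal{Z}$ the component $a$ is bounded away from $0$ and from above. Embed $\mathcal{M}_t$ into $E \coloneqq \R^{n,n}\times\R^{n,n}\times\R^{n,n}\times\R^t\times\R^t$; since the multiplicative metric \eqref{mult} on the $\R^t_{>0}$-factor is uniformly equivalent on $\mathcal{Z}$ to the Euclidean one (as $a$ stays in a compact subset of $(0,\infty)^t$ there), all Riemannian norms $\norm{\cdot}_{x^{(k)}}$, $\norm{\cdot}_{x^{(k+1)}}$ are equivalent, with a single constant, to the ambient Euclidean norm $\norm{\cdot}_E$. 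It therefore suffices to prove the inequality with the Riemannian norms replaced by $\norm{\cdot}_E$.

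First I would record three facts, each made uniform by compactness of $\mathcal{Z}$. \textbf{(a)} The function $F$ is $C^\infty$ on its domain (the only non-polynomial terms, coming from $T_{ab}^{-1}$, involve $1/a_k$, which is smooth for $a_k>0$), so on the compact set $\mathcal{Z}$ the $E$-valued map $x\mapsto \nabla_{\mathcal{M}_t} F(x)$ is Lipschitz with some constant $L_1$ and bounded by some $M$. \textbf{(b)} Because $\mathcal{R}$ is smooth near the zero section and $D\mathcal{R}_x(0_x)=\mathrm{id}$, there are $\varepsilon_0>0$ and $L_2>0$ with $\norm{\mathcal{R}_x(v)-x}_E\le L_2\norm{v}_x$ for all $x\in\mathcal{Z}$ and $\norm{v}_x\le\varepsilon_0$. \textbf{(c)} The projection transport \eqref{eqn:ProjTransport} satisfies $\mathcal{T}_{x,0_x}=\mathrm{id}$, and $\mathcal{T}_{x,v}\xi=\Pi_{T_{\mathcal{R}_x(v)}\mathcal{M}_t}(\xi)$ depends smoothly on the base point, so there is $L_3>0$ with $\norm{\mathcal{T}_{x,v}\xi-\xi}_E\le L_3\norm{v}_x\norm{\xi}_x$ whenever $x\in\mathcal{Z}$, $\norm{v}_x\le\varepsilon_0$ and $\xi\in T_x\mathcal{M}_t$; here one uses $\Pi_{T_x\mathcal{M}_t}\xi=\xi$.

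Then I would set $v=\alpha^{(k)}d^{(k)}$, $x=x^{(k)}$ and $x'=x^{(k+1)}=\mathcal{R}_x(v)$. By Lemma~\ref{lem:ConvergenceOfAlphakdk}, $\norm{v}_x=\alpha^{(k)}\norm{d^{(k)}}_{x^{(k)}}\to 0$, so for all $k$ large enough $\norm{v}_x\le\varepsilon_0$ and (b), (c) apply. Since $g^{(k+1)}$ and $\mathcal{T}_{x,v}g^{(k)}$ both lie in $T_{x'}\mathcal{M}_t$, the metric equivalence, a triangle inequality, and then (a), (b), (c) together with $\norm{g^{(k)}}_x\le M$ give, for a constant $c$ depending only on $\mathcal{Z}$,
\begin{align*}
\norm{y^{(k)}}_{x'} &\le c\,\bigl(\norm{g^{(k+1)}-g^{(k)}}_E+\norm{g^{(k)}-\mathcal{T}_{x,v}g^{(k)}}_E\bigr)\\
&\le c\,\bigl(L_1\norm{x'-x}_E+L_3\norm{v}_x\,\norm{g^{(k)}}_x\bigr)\\
&\le c\,(L_1 L_2+L_3 M)\,\norm{v}_x = c\,(L_1L_2+L_3M)\,\alpha^{(k)}\norm{d^{(k)}}_{x^{(k)}},
\end{align*}
which is the claim with $C\coloneqq c\,(L_1L_2+L_3M)$.

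The main obstacle is purely one of uniformity: one must verify that $\nabla_{\mathcal{M}_t}F$ is genuinely globally Lipschitz on $\mathcal{Z}$, for which the $1/a_k$-terms in $T_{ab}^{-1}$ are the only delicate point and are harmless because $a$ is bounded away from $0$ on $\mathcal{Z}$ by Proposition~\ref{lem:ExistenceMinimizers}; and one must stay within the $\varepsilon_0$-tube around the zero section where the retraction and the vector transport are uniformly well behaved, which is exactly why the bound is only asserted for $k$ sufficiently large and why Lemma~\ref{lem:ConvergenceOfAlphakdk} is invoked. With these in hand the remainder is the routine mean-value-type argument carried out above.
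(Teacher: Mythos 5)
Your argument is correct and rests on the same template as the paper's proof: both reduce the estimate to Lipschitz continuity of the gradient on the compact level set $\mathcal{Z}$, made uniform by Proposition~\ref{lem:ExistenceMinimizers}, and both invoke Lemma~\ref{lem:ConvergenceOfAlphakdk} so that the displacement $\alpha^{(k)}d^{(k)}$ eventually lies in the small-radius regime where the retraction is uniformly well behaved. Where you differ is in the bookkeeping. The paper exploits the specific structure of the projection transport \eqref{eqn:ProjTransport}: since $g^{(k+1)}$ already lies in $T_{x^{(k+1)}}\mathcal{M}_t$, one has $y^{(k)}=\Pi_{T_{x^{(k+1)}}\mathcal{M}_t}\bigl(g^{(k+1)}-g^{(k)}\bigr)$, and nonexpansiveness of the orthogonal projection in the inner product $\langle\cdot,\cdot\rangle_{x^{(k+1)}}$ gives $\|y^{(k)}\|_{x^{(k+1)}}\le\|g^{(k+1)}-g^{(k)}\|_{x^{(k+1)}}$ in one step, after which only a gradient-Lipschitz bound remains; the retraction displacement is then controlled by \cite[Proposition~7.4.5]{AMS09}. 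You instead split $y^{(k)}=\bigl(g^{(k+1)}-g^{(k)}\bigr)+\bigl(g^{(k)}-\mathcal{T}_{x^{(k)},\alpha^{(k)}d^{(k)}}g^{(k)}\bigr)$ and bound the second, transport-deviation term separately via smoothness of the family of tangent projections. This makes your write-up slightly longer (the transport term does not vanish for free) but also slightly more general: it would go through verbatim for any smooth vector transport satisfying the consistency axiom, whereas the paper's one-step shortcut is special to the projection transport. Your passage through the ambient Euclidean norm, using that the multiplicative metric on the $\R^t_{>0}$-factor is uniformly equivalent to the Euclidean one on $\mathcal{Z}$, also makes explicit the norm equivalences that the paper leaves implicit. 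Both routes are valid; neither requires an idea absent from the other.
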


\begin{proof} 
	By the applied vector transport \eqref{eqn:ProjTransport} and since the projection is nonexpansive, we obtain
	\begin{align*}
	&\bigl \| y^{(k)} \bigr\|_{ x^{(k+1)} }^2 \\
	&= \bigl\| g^{(k+1)} - {\mathcal T}_{ x^{(k)}, \alpha^{(k)} d^{(k)} }  g^{(k)}\bigr \|_{ x^{(k+1)} }^2
	= \bigl \| g^{(k+1)} - \Pi_{T_{x^{(k+1)}} \mathcal M_t} g^{(k)} \bigr\|_{x^{(k+1)} }^2\\
	&\le
	\bigl \| \nabla_{S,Q,V,b} F \bigl(x^{(k+1)}\bigr) - \nabla_{S,Q,V,b} F \bigl(x^{(k)}\bigr)\bigr\|^2 
	+
	\bigl\| \nabla_{a} F \bigl(x^{(k+1)}\bigr) - \nabla_{a} F \bigl(x^{(k)}\bigr)\bigr\|_{a^{(k+1)}}^2 \\
	&\le
	\bigl\| \nabla_{S,Q,V,b}^E F \bigl(x^{(k+1)}\bigr) -  \nabla_{S,Q,V,b}^E F \bigl(x^{(k)}\bigr)\bigr\|^2
	+
	\bigl( a^{(k+1)} \bigr)^{-2} \bigl\| \nabla_{a} F \bigl(x^{(k+1)}\bigr) - \nabla_{a} F \bigl(x^{(k)}\bigr)\bigr\|^2,
	\end{align*}
	where $\nabla_{S,Q,V,b}^E$ denotes the Euclidean gradient on $\left(\mathbb R^{n,n} \right)^3 \times \mathbb R^t$.
	By Lemma \ref{gradients}, the gradients are continuously differentiable on $\mathcal M_t$
	such that they are Lipschitz continuous on $\mathcal Z$. Hence there exists $C >0$ with
	$$
	\bigl\| y^{(k)}\bigr\|_{ x^{(k+1)} }^2 
	\le 
	C \mathrm{dist}_{\mathcal M_t}^2 \bigl(x^{(k+1)}, x^{(k)} \bigr)
	=
	C \mathrm{dist}_{\mathcal M_t}^2 \bigl(\mathcal R_{x^{(k)}}\bigl(\alpha^{(k)} d^{(k)}\bigr), x^{(k)}\bigr),
	$$
	Since ${\mathcal Z}$ is compact and by using Lemma~\ref{lem:ConvergenceOfAlphakdk} together with \cite[Proposition~7.4.5]{AMS09}, we finally obtain
	\[\bigl\| y^{(k)} \bigr\|_{ x^{(k+1)} } \leq C \alpha^{(k)} \bigl\|d^{(k)}\bigr\|_{x^{(k)}}.\]
	
\end{proof}

\begin{lemma}\label{lem:dBounded}
	Suppose that there exists $\epsilon>0$ such that 
	\begin{align}
	\label{eqn:GradGeqEpsilon}
	\bigl \Vert \nabla_{\mathcal{M}_t} F\bigl(x^{(k)}\bigr) \bigr \Vert_{x^{(k)}} \geq \epsilon
	\end{align}
	for all $k\in\N$.
	Then there exists $C>0$ such that for all $k\in\N$ it holds
	\begin{align*}
	\bigl \Vert d^{(k)}\bigr \Vert_{x^{(k)}} \leq C.
	\end{align*}
\end{lemma}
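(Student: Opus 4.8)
The plan is to derive a recursive bound for $\Vert d^{(k)}\Vert_{x^{(k)}}$ from the update rule $d^{(k+1)} = -g^{(k+1)} + \beta^{(k)}\tilde d^{(k)} - \theta^{(k)} y^{(k)}$ in Algorithm~\ref{alg:cg_1}, and then to exploit $\alpha^{(k)}\Vert d^{(k)}\Vert_{x^{(k)}}\to 0$ from Lemma~\ref{lem:ConvergenceOfAlphakdk} to turn that bound into a contraction for large $k$. First I would use that all iterates lie in the compact set $\mathcal Z$ (Proposition~\ref{lem:ExistenceMinimizers}): since $\nabla_{\mathcal M_t} F$ is continuous, there is $M>0$ with $\Vert g^{(k)}\Vert_{x^{(k)}}\le M$, and by the standing assumption \eqref{eqn:GradGeqEpsilon} also $\Vert g^{(k)}\Vert_{x^{(k)}}\ge\epsilon$. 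On $\mathcal Z$ the components of $a^{(k)}$ are bounded away from $0$ and from above, so the metric \eqref{mult} is uniformly equivalent to the Euclidean one; combined with the nonexpansiveness of the orthogonal projections defining the vector transport \eqref{eqn:ProjTransport}, this yields a uniform constant $C_1$ with $\Vert\mathcal T_{x^{(k)},\alpha^{(k)}d^{(k)}}v\Vert_{x^{(k+1)}}\le C_1\Vert v\Vert_{x^{(k)}}$, in particular $\Vert\tilde d^{(k)}\Vert_{x^{(k+1)}}\le C_1\Vert d^{(k)}\Vert_{x^{(k)}}$.

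Next I would bound the scalar coefficients. By Cauchy--Schwarz together with the two-sided bound on $\Vert g^{(k)}\Vert$, one gets $|\beta^{(k)}|\le\frac{M}{\epsilon^2}\Vert y^{(k)}\Vert_{x^{(k+1)}}$ and $|\theta^{(k)}|\le\frac{M}{\epsilon^2}\Vert\tilde d^{(k)}\Vert_{x^{(k+1)}}\le\frac{MC_1}{\epsilon^2}\Vert d^{(k)}\Vert_{x^{(k)}}$. Invoking Lemma~\ref{prop:YLeqSearchDirection}, there is $C_2>0$ with $\Vert y^{(k)}\Vert_{x^{(k+1)}}\le C_2\alpha^{(k)}\Vert d^{(k)}\Vert_{x^{(k)}}$ for all large $k$. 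Substituting these estimates into the triangle inequality for $d^{(k+1)}$ gives a constant $c>0$ such that, for all sufficiently large $k$, $\Vert d^{(k+1)}\Vert_{x^{(k+1)}}\le M + c\,\alpha^{(k)}\Vert d^{(k)}\Vert_{x^{(k)}}\cdot\Vert d^{(k)}\Vert_{x^{(k)}}$.

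Finally, by Lemma~\ref{lem:ConvergenceOfAlphakdk} we have $\alpha^{(k)}\Vert d^{(k)}\Vert_{x^{(k)}}\to 0$, so there is $k_1$ with $c\,\alpha^{(k)}\Vert d^{(k)}\Vert_{x^{(k)}}\le\tfrac12$ for all $k\ge k_1$, whence $\Vert d^{(k+1)}\Vert_{x^{(k+1)}}\le M+\tfrac12\Vert d^{(k)}\Vert_{x^{(k)}}$. A straightforward induction then shows $\Vert d^{(k)}\Vert_{x^{(k)}}\le\max\{\Vert d^{(k_1)}\Vert_{x^{(k_1)}},\,2M\}$ for all $k\ge k_1$, while the finitely many values for $k<k_1$ are each finite; taking $C$ to be the maximum of all these quantities proves the claim. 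I expect the main subtlety to be the quadratic-looking term $\alpha^{(k)}\Vert d^{(k)}\Vert_{x^{(k)}}^2$: it is harmless precisely because Lemma~\ref{lem:ConvergenceOfAlphakdk} forces one of its factors to be small, which is what makes the recursion eventually contractive; a secondary technical point is verifying that the vector transport is uniformly quasi-nonexpansive despite the point-dependent metric on the $\mathbb R_{>0}^t$-factor.
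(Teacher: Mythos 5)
Your proof is correct and follows essentially the same route as the paper: bound the gradient on the compact level set, bound the vector transport uniformly (using that the $a$-component is bounded away from zero), estimate $|\beta^{(k)}|$ and $|\theta^{(k)}|$ via Cauchy--Schwarz and Lemma~\ref{prop:YLeqSearchDirection}, derive the recursion with the $\alpha^{(k)}\Vert d^{(k)}\Vert^2$ term, and use Lemma~\ref{lem:ConvergenceOfAlphakdk} to make it eventually contractive. The only cosmetic difference is that the paper groups the two coefficient bounds before substituting, whereas you substitute them directly; the estimates and the final induction are the same.
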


\begin{proof}
	First, by compactness of $\mathcal{Z}$ and continuity of $\nabla_{\mathcal{M}_t} F$ on $\mathcal{Z}$, there exists $\gamma>0$ such that
	\begin{align}\label{eqn:GradBounded}
	\| \nabla_{\mathcal{M}_t}F(x) \|_x
	\leq \gamma 
	\end{align}
	for all $x\in\mathcal{Z}$.
	Since the $a$ component of $\mathcal Z$ is uniformly bounded away from zero, it holds for $x \in \mathcal{Z}$, $d\in T_x \mathcal{M}_t$, $\xi \in T_x \mathcal{M}_t$ and $z = \mathcal R_x(d) \in \mathcal Z$ that
	\begin{align}\label{eqn:TransportEstimation}
	\norm{\mathcal{T}_{x,d}\xi}_z  = \norm{\Pi_{T_z \mathcal{M}_t} (\xi)}_z \leq \norm{\xi}_z \leq C \norm{\xi}_x, \qquad C > 0,
	\end{align}
	and we can estimate $\vert \theta^{(k)} \vert $ in Algorithm~\ref{alg:cg_1} as follows 
	\begin{align} \label{eqn:ThetaEstimation}
	\bigl \vert \theta^{(k)} \bigr \vert 
	&= 
	\frac{\bigl|\bigl\langle\nabla_{\mathcal{M}_t} F\bigl(x^{(k+1)}\bigr),\mathcal{T}_{x^{(k)},\alpha^{(k)} d^{(k)}}d^{(k)}\bigr\rangle_{x^{(k+1)}} \bigr|}{\norm{\nabla_{\mathcal{M}_t}F\bigl(x^{(k)}\bigr)}_{x^{(k)}}^2}\\
	&\leq 
	C \bigl \Vert d^{(k)} \bigr \Vert_{x^{(k)}} \frac{\norm{\nabla_{\mathcal{M}_t}F\bigl(x^{(k+1)}\bigr)}_{x^{(k+1)}} }{\norm{\nabla_{\mathcal{M}_t}F\bigl(x^{(k)}\bigr)}^2_{x^{(k)}}}.
	\end{align}
	Similarly, we obtain for $\beta^{(k)}$ using Lemma~\ref{prop:YLeqSearchDirection} that there exists an integer $k_1>0$ such that for all $k\geq k_1$ it holds
	\begin{align}\label{eqn:BetaEstimation}
	\bigl \vert \beta^{(k)} \bigr \vert \leq 
	C \alpha^{(k)} \bigl \Vert d^{(k)} \bigr \Vert_{x^{(k)}} \frac{\norm{\nabla_{\mathcal{M}_t}F\bigl(x^{(k+1)}\bigr)}_{x^{(k+1)}}}{\norm{\nabla_{\mathcal{M}_t}F\bigl(x^{(k)}\bigr)}^2_{x^{(k)}}}.
	\end{align}
	Using the definition of $d^{(k+1)}$ together with \eqref{eqn:GradBounded}, \eqref{eqn:TransportEstimation} and Lemma~\ref{prop:YLeqSearchDirection}, we get for all $k\geq k_1$ that
	\[\bigl \| d^{(k+1)} \bigr\|_{x^{(k+1)}} 
	\leq 
	\gamma 
	+ C \bigl \| d^{(k)} \bigr \|_{x^{(k)}} \Bigr(\bigl \vert \beta^{(k)} \bigr \vert
	+ \alpha^{(k)} \bigl \vert \theta^{(k)} \bigr \vert \Bigr).\]
	Next, plugging in \eqref{eqn:ThetaEstimation} and \eqref{eqn:BetaEstimation}
	gives
	\begin{align*}
	\bigl \| d^{(k+1)} \bigr\|_{x^{(k+1)}} 
	&\leq 
	\gamma 
	+ 2C \alpha^{(k)}\bigl\| d^{(k)}\bigr\|^2_{x^{(k)}} \frac{\bigl\| \nabla_{\mathcal{M}_t} F\bigl(x^{(k+1)}\bigr) \bigr\|_{x^{(k+1)}}}{\bigl\| \nabla_{\mathcal{M}_t} F\bigl(x^{(k)}\bigr)\bigr\|_{x^{(k)}}^2}
	\\
	&\leq 
	\gamma + 2C\frac{\gamma}{\epsilon^2}\alpha^{(k)} \bigl \Vert d^{(k)} \bigr \Vert_{x^{(k)}}^2 .
	\end{align*}
	By Lemma \ref{lem:ConvergenceOfAlphakdk}, there exist $r\in (0,1)$ and $k_2>0$ such that for all $k\geq k_2$, 
	\begin{align*}
	2C\frac{\gamma}{\epsilon^2} \alpha^{(k)} \|d^{(k)}\|_{x^{(k)}} \leq r.
	\end{align*}
	Then, we conclude for all $k\geq k_0 \coloneqq \max(k_1,k_2)$ that
	\begin{align*}
	\bigl \|d^{(k+1)}\bigr \|_{x^{(k+1)}}  &\leq \gamma + r\bigl \Vert d^{(k)} \bigr \Vert_{x^{(k)}} \leq \gamma\sum_{i=0}^{k-k_0}r^i + r^{k-k_0+1}\bigr \Vert d^{(k_0)} \bigl \Vert_{x^{(k_0)}} \\
	&\leq \frac{\gamma}{1-r} + \bigl \Vert d^{(k_0)} \bigr \Vert_{x^{(k_0)}}.
	\end{align*}
	Finally, we estimate
	\begin{align*}
	\bigl \| d^{(k)}\bigr \|_{x^{(k)}} \leq  \frac{\gamma}{1-r} + \max_{1 \leq i \leq k_0} \bigl \Vert d^{(i)}\bigr \Vert_{x^{(i)}}.
	\end{align*}
\end{proof}

Now, we are able to prove Theorem~\ref{thm:SameConvergenceTheorem}.
\\[2ex]
\begin{proof}[Proof of Theorem \textup{\ref{thm:SameConvergenceTheorem}}:]
For the sake of contradiction, we assume that there exists a constant $\epsilon>0$ 
such that 
$\|\nabla_{\mathcal{M}_t} F(x^{(k)})\|_{x^{(k)}} \geq \epsilon$ for all $k\in\N$. 
By construction of $d^{(k)}$ we get
\begin{align*}
\bigl \| \nabla_{\mathcal{M}_t}F\bigl(x^{(k)}\bigr)\bigr \|_{x^{(k)}}^2 
= 
-\Bigl \langle d^{(k)},\nabla_{\mathcal{M}_t}F\bigl(x^{(k)}\bigr) \Bigr \rangle_{x^{(k)}} 
\leq \bigl\| d^{(k)} \bigr\|_{x^{(k)}} \bigl \Vert \nabla_{\mathcal{M}_t}F\bigl(x^{(k)}\bigr) \bigr \Vert_{x^{(k)}},
\end{align*}
which implies $\Vert \nabla_\mathcal{M}F(x^{(k)}) \Vert_{x^{(k)}}\leq\Vert d^{(k)} \Vert_{x^{(k)}}$. 
Combining this with Lemma \ref{lem:ConvergenceOfAlphakdk}, we get 
\begin{align*}
\lim_{k\to\infty} \alpha^{(k)}\bigl \Vert \nabla_{\mathcal{M}_t}F\bigl(x^{(k)}\bigr) \bigr \Vert_{x^{(k)}} = 0
\end{align*}
and consequently also $\lim_{k\to\infty}\alpha^{(k)}=0$. 

It follows from the line search condition in 
Algorithm~\ref{alg:AlphasqrLineSearch} that
\begin{align}\label{eqn:1}
F \circ \mathcal{R}_{x^{(k)}}\bigl(\tau^{-1} \alpha^{(k)}d^{(k)}\bigr) - F\bigl(x^{(k)}\bigr) 
\geq 
-\delta\tau^{-2}{\alpha^{(k)}}^2\bigl \Vert d^{(k)} \bigr \Vert_{x^{(k)}}^2.
\end{align}
Next, we consider the so-called pullback function $\hat{F}\coloneqq F\circ\mathcal{R}\colon T \mathcal{M}_t\rightarrow\R$, which is $C^\infty$ as concatenation of $C^\infty$ functions and fulfills
\begin{align}
\label{eqn:GradientPullbackInZero}
\nabla\hat{F}_x(0) = \nabla_{\mathcal{M}_t} F(x), \ \ \ x \in {\mathcal{M}_t},
\end{align}
due to the properties of retractions. For $x\in \mathcal{M}_t$, we denote the restriction of $\hat{F}$ to $T_x\mathcal{M}_t$ by $\hat{F}_x$.
Since $\hat F$ is $C^\infty$ and hence its gradient is Lipschitz on compact sets, there exists $L>0$ such that
\begin{align}
\label{eqn:PullBackLContGradient}
\bigl \Vert \nabla \hat{F}_x(\eta) - \nabla \hat{F}_x(0) \bigr \Vert_x \leq L \norm{\eta}_x
\end{align}
for all $x\in\mathcal{Z}$ and $\eta\in T_x\mathcal{M}_t$ with $\norm{\eta}_x\leq1$.

By the mean-value theorem we obtain for some $\omega^{(k)}\in(0,1)$ that
\begin{align*}
F \circ \mathcal{R}_{x^{(k)}}\bigl(\tau^{-1}\alpha^{(k)}d^{(k)}\bigr) - F\bigl(x^{(k)}\bigr) 
&= \hat{F}_{x^{(k)}}\bigl(\tau^{-1}\alpha^{(k)}d^{(k)}\bigr) - \hat{F}_{x^{(k)}}(0) \\
&= \tau^{-1}\alpha^{(k)}\Bigl\langle \nabla\hat{F}_{x^{(k)}} \bigl(\omega^{(k)}\tau^{-1}\alpha^{(k)}d^{(k)}\bigr), d^{(k)}\Bigr\rangle_{x^{(k)}}
\end{align*}
and further by \eqref{eqn:PullBackLContGradient} for $\Vert \alpha^{(k)}d^{(k)} \Vert_{x^{(k)}} \leq \tau$ that
\begin{align*}
&F\circ \mathcal{R}_{x^{(k)}}\bigl(\tau^{-1}\alpha^{(k)}d^{(k)}\bigr) - F\bigl(x^{(k)}\bigr)\\
&= \tau^{-1}\alpha^{(k)}\Bigl\langle \nabla\hat{F}_{x^{(k)}}(0) +  \nabla\hat{F}_{x^{(k)}}\bigl(\omega^{(k)}\tau^{-1}\alpha^{(k)}d^{(k)}\bigr) - \nabla\hat{F}_{x^{(k)}}(0),d^{(k)}\Bigr\rangle_{x^{(k)}}\\
&\leq \tau^{-1}\alpha^{(k)}\Bigl \langle\nabla\hat{F}_{x^{(k)}}(0),d^{(k)}\Bigr \rangle_{x^{(k)}} + L\omega^{(k)}\tau^{-2}{\alpha^{(k)}}^2\bigl \Vert d^{(k)} \bigr \Vert^2_{x^{(k)}}\\
&= \tau^{-1}\alpha^{(k)}\Bigl \langle\nabla_{\mathcal{M}_t}F\bigl(x^{(k)}\bigr),d^{(k)}\Bigr\rangle_{x^{(k)}} + L\omega^{(k)}\tau^{-2}{\alpha^{(k)}}^2\bigl \Vert d^{(k)}\bigr \Vert^2_{x^{(k)}}\\
&\leq -\tau^{-1}\alpha^{(k)}\bigl \Vert \nabla_{\mathcal{M}_t}F\bigl(x^{(k)}\bigr)\bigr \Vert^2_{x^{(k)}} + L\tau^{-2}{\alpha^{(k)}}^2\bigl \Vert d^{(k)} \bigl \Vert^2_{x^{(k)}}.
\end{align*}
Together with \eqref{eqn:1} we conclude for sufficiently large $k$ that
\begin{align*}
\bigl \Vert \nabla_{\mathcal{M}_t}F\bigl(x^{(k)}\bigr) \bigr \Vert_{x^{(k)}}^2 
&\leq -\frac{\tau}{\alpha^{(k)}}\left(F\circ \mathcal{R}_{x^{(k)}}\bigl(\tau^{-1}\alpha^{(k)}d^{(k)}\bigr) - F\bigl(x^{(k)}\bigr)\right) + L\tau^{-1}\alpha^{(k)}\bigr \Vert d^{(k)} \bigl \Vert^2_{x^{(k)}}\\
&\leq (L+\delta)\tau^{-1}\alpha^{(k)}\bigr \Vert d^{(k)} \bigr \Vert^2_{x^{(k)}}.
\end{align*}
Since $\{d^{(k)}\}_k$ is bounded by Lemma \ref{lem:dBounded} and $\lim_{k\to\infty}\alpha^{(k)}=0$, we obtain the contradiction
\begin{align*}
\lim_{k\to\infty} \bigl \Vert \nabla_{\mathcal{M}_t} F\bigl(x^{(k)}\bigr) \bigr \Vert_{x^{(k)}} = 0,
\end{align*}
which concludes the proof.
\end{proof}

\subsection*{Acknowledgments} 
The authors want to thank S. Neumayer (TU Berlin) for fruitful discussions.
Funding by the German Research Foundation (DFG) with\-in the project STE 571/16-1 is gratefully acknowledged.

\bibliographystyle{abbrv}
\bibliography{references_max}

\end{document}